\documentclass[lettersize,journal]{IEEEtran}
\usepackage{algorithm}
\usepackage{array}
\usepackage[caption=false,font=normalsize,labelfont=sf,textfont=sf]{subfig}
\usepackage{textcomp}
\usepackage{stfloats}
\usepackage{url}
\usepackage{verbatim}
\usepackage{graphicx}
\usepackage{cite}
\usepackage{amsmath,amssymb,amsfonts,amsthm,bm,mathtools}
\usepackage{algorithmic}
\usepackage{xcolor}
\usepackage{multirow}
\usepackage{booktabs}
\graphicspath{{fig/}}
\usepackage{siunitx}
\usepackage[hidelinks]{hyperref}

\newtheorem{theo}{Theorem}
\newtheorem{lemma}[theo]{Lemma}
\newtheorem{prop}[theo]{Proposition}
\newtheorem{coro}[theo]{Corollary}
\newtheorem{defi}[theo]{Definition}

\newtheorem{propprime}[theo]{Proposition}
\theoremstyle{remark}
\newtheorem{rem}[theo]{Remark}

\newif\ifincludeproofs

\newcommand{\E}{\mathbb{E}}
\newcommand{\Var}{\operatorname{Var}}

\newcommand{\MixPoi}{\operatorname{MixPoi}}

\newcommand{\deff}{d_{\mathrm{eff}}}

\begin{document}

\title{Spectral Costs of Multinomial and Residual Resampling in High-Dimensional Particle Ensembles: A Free-Probability Analysis}

\author{SONG~Deming,~\IEEEmembership{Student Member,~IEEE,}%
\thanks{SONG Deming is with the Faculty of Applied Sciences, Macao Polytechnic University, Macao, China (e-mail: p2514650@mpu.edu.mo).}%
\thanks{Corresponding author: Chi-Kin Lam (e-mail: cklamsta@mpu.edu.mo).}%
\hspace{0.5 cm} 
\and
\hspace{0.5 cm}
Chi-Kin Lam%
\thanks{Chi-Kin Lam is with the Faculty of Applied Sciences, Macao Polytechnic University, Macao, China.}}%


\maketitle

\begin{abstract}
Resampling is the standard remedy against weight degeneracy in particle filters, yet its effect on the spectrum of the particle second-moment matrix is poorly understood. We study a single resampling step for a $d$-dimensional ensemble of $N$ particles with $d/N\to c$. For any unbiased scheme, the resampled empirical second-moment matrix dominates its pre-resampling counterpart in conditional convex order, while the conditional mean is preserved; the excess second spectral moment equals the Frobenius energy of the resampling perturbation. For multinomial resampling, the empirical count law converges to the mixed-Poisson law determined by the limiting weight law. For residual resampling, the count law is determined by the joint limit of floor--fraction pairs: we classify all residual limits of weight sequences converging to $\delta_1$ in law as a one-parameter family between the deterministic identity and the full multinomial count law. Each count law feeds a common covariance-side spectral map, yielding exact consequences: a closed-form zero-eigenvalue mass, a rank-and-spectral-gap transition with a universal lower-edge benchmark, computable regular edges, unbounded right support, and scheme-specific second-moment costs. The effective sample size determines one bulk statistic but cannot determine the limiting spectrum. Pre-registered numerical experiments confirm the theory at every level.
\end{abstract}

\begin{IEEEkeywords}
Particle filtering, resampling, random matrix theory, free probability, limiting spectral distribution, effective sample size.
\end{IEEEkeywords}

\section{Introduction}
\label{sec:intro}

\IEEEPARstart{P}{article} filters approximate posterior distributions by weighted particles~\cite{doucet2011tutorial,delmoral2004feynman}. Resampling, the redrawing of particles according to their weights, is the
standard remedy against weight degeneracy\cite{kong1994sequential}. Its effect on the geometry of the
ensemble is considerably less well understood: specifically, its effect on the
spectrum of the ensemble second-moment matrix, which governs the conditioning
of subsequent computations, and on the rank of the ensemble subspace. This
paper isolates a single resampling step and determines how resampling
transforms the limiting spectral distribution of the empirical second-moment
matrix in a high-dimensional particle ensemble.

\subsection{Finite-sample results}

For any unbiased scheme, the resampled matrix dominates the pre-resampling
matrix in conditional convex order (Theorem~\ref{thm:jensen}). The principal
finite-sample result is Proposition~\ref{prop:cost}, which identifies the
excess second spectral moment with the expected Frobenius energy of the
resampling perturbation. The resulting expression is a quadratic form in the
count covariance, and it separates the contribution of the particle geometry,
carried by $G\circ G$, from that of the mechanism noise, carried by
$\operatorname{Cov}(\bm{n}\mid X_N,w)$.

\subsection{High-dimensional limit}

The covariance-side step is classical: weighted sample-covariance matrices
converge to a deterministic law $\mathcal{T}_c(\nu)$ characterized by a
Silverstein-type fixed-point equation, equivalently by a
free-multiplicative-convolution relation (Definition~\ref{def:Tc}). The
contributions of this paper concern the count-law side.

For multinomial resampling the limiting count law is determined by the
limiting weight law $\rho$ alone (Theorem~\ref{thm:multi-count}). For residual
resampling it is not. The floor--fraction map is discontinuous at the
integers, and the limiting count law is determined instead by the joint limit
$\pi$ of floor--fraction pairs (Theorem~\ref{thm:res-count}). The strongest
form of this dependence is a complete classification
(Theorem~\ref{thm:classification}): the residual limits of weight sequences
converging to $\delta_1$ constitute the one-parameter family
$(1-\alpha)\delta_1+\alpha\operatorname{Poi}(1)$ with $\alpha\in[0,1]$,
ranging from the deterministic identity at $\alpha=0$ to the full multinomial
count law at $\alpha=1$. The endpoint $\alpha=1$ is attained by a bounded
two-level construction, which establishes that the phenomenon originates at
the integer boundary rather than in heavy tails.

The resulting framework is
\begin{equation}
  \mu_{\mathsf{s},H} = H\boxtimes\mathcal{T}_c(\nu_{\mathsf{s}}),
  \label{eq:framework}
\end{equation}
with
\begin{equation}
  \nu_{\mathrm{pre}} = \rho, \quad
  \nu_{\mathrm{multi}} = \operatorname{MixPoi}(\rho), \quad
  \nu_{\mathrm{res}} = \mathcal{L}_\pi(K+\operatorname{Poi}(U)),
  \label{eq:definitions}
\end{equation}
where the last of these depends on finer information than $\rho$ alone.

\subsection{Consequences}

Composed with the covariance-side spectral map, these count laws yield a
closed-form zero-eigenvalue mass; a rank-and-spectral-gap transition at
$p_{\mathsf{s}}=1-\nu_{\mathsf{s}}(\{0\})$, with an explicit
Marchenko--Pastur lower-edge benchmark\cite{marchenko1967distribution} and computable regular edges; exact
second-moment increments $c\,a_1^2$ for multinomial resampling and
$c\,a_1^2\,\mathbb{E}_\pi U$ for residual resampling; an asymptotic
rank-preservation ordering $\mu_{\mathrm{res}}(\{0\})\le
\mu_{\mathrm{multi}}(\{0\})$; and a finite-sample Frobenius ordering of the
two schemes. For multinomial resampling the limiting right support is
unbounded, so that the pseudo-condition number diverges in probability and
bulk extent must be interpreted as a quantile notion; residual limits may
instead have bounded support. Sections~\ref{sec:lsd}--\ref{sec:res} state
these results precisely.

The results order the spectral observables by the information each requires.
The bulk second moment, and therefore the effective sample size (ESS), depends
only on $r_2:=\int\lambda^2\rho$. The zero mass and the rank threshold depend
only on $\mathcal{L}_\rho(1)$. The spectral edges and the full limiting
spectral distribution depend on the entire count law. The choice of mechanism
is therefore not observable through ESS\cite{martino2017ess}.

\subsection{Relation to prior work}

Convex-order comparisons of resampling schemes are due to Gerber, Chopin and
Whiteley \cite{gerber2019negative}, and the inadequacy of ESS as a sole
diagnostic was argued by Elvira, Martino and Robert
\cite{elvira2022rethinking}. Weighted and separable covariance models are
classical \cite{silverstein1995strong,bai2010spectral,hachem2007deterministic,%
zhang2006spectral,paul2009no}, and deterministic resampling schemes were
surveyed by Li, Sattar and Sun \cite{li2012deterministic}. The bootstrap
spectral analysis of El Karoui and Purdom \cite{elkaroui2016bootstrap}
concerns the statistical bootstrap rather than the bootstrap particle filter\cite{gordon1993novel, cappe2007overview}.
The elements introduced here are the count-law limits of resampling
mechanisms, including the integer-boundary classification for residual
resampling, together with the mechanism-level spectral consequences derived
from them.

\subsection{Notation}

Eigenvalues are ordered $\lambda_1(M)\ge\cdots\ge\lambda_d(M)$, and
$\lambda_{\min}^+$ denotes the smallest positive eigenvalue;
$\mathcal{S}_+^d$ denotes the positive semidefinite matrices and
$\operatorname{tr}$ the trace. Further, $\delta_x$ is the Dirac mass,
$\Rightarrow$ weak convergence, $\mathrm{MP}(y)$ the Marchenko--Pastur law\cite{marchenko1967distribution}
with ratio $y$ and mean $1$, $\boxtimes$ free multiplicative convolution,
$D_c$ the push-forward under $x\mapsto cx$, $G=X^\top X$ the Gram matrix,
$A\circ B$ the entrywise product, $\nu^{(M)}:=(t\mapsto t\wedge M)_\#\nu$, and
$\kappa^+(S):=\lambda_{\max}(S)/\lambda_{\min}^+(S)$. We write
$\operatorname{frac}(x):=x-\lfloor x\rfloor\in[0,1)$, $\mathcal{L}_\rho$ for
the Laplace transform of $\rho$, and $\mathcal{L}(\cdot)$ for the law of a
random variable. Finally, $c_N:=d/N$, $r_2:=\int\lambda^2\rho(d\lambda)$, and
$\pi$ denotes a probability law on $\mathbb{Z}_{\ge0}\times[0,1]$ whose second
coordinate is permitted to attain $1$, with
$\mathbb{E}_\pi U:=\int u\,\pi(dk,du)$.

\section{One-Step Resampling Model, Assumptions, and Conventions}
\label{sec:model}

\subsection{Particles}

Let $X_N=[x_1,\dots,x_N]\in\mathbb{R}^{d\times N}$ with
$X_N=\Sigma_N^{1/2}Z_N$, where $\Sigma_N\in\mathcal{S}_+^d$ is a
deterministic population second-moment matrix with empirical spectral law
$H_N$, and where $Z_N$ has independent and identically distributed entries
drawn from a fixed distribution with mean $0$, variance $1$, and the moment
conditions stated with each result.

The matrix $\Sigma_N$ represents the deterministic correlation structure,
which in filtering terminology is the background or forecast covariance; its
spectral law $H_N$ is the free parameter of the asymptotic analysis. The
matrix $Z_N$ is a whitened fluctuation matrix, so that the columns
$x_i=\Sigma_N^{1/2}z_i$ are independent, identically distributed and centered,
with $\mathbb{E}[x_ix_i^\top]=\Sigma_N$. The separable covariance model
assigns the population spectrum $H$ and the count law $\nu$ to two separate
channels, which is what permits the free-probability analysis. No particle
model is required for the finite-sample results of
Section~\ref{sec:finite}; the model is invoked only where spectra are
concerned.

\subsection{Conditioning conventions}

The finite-sample statements of Section~\ref{sec:finite} are conditional on
$(X_N,w)$. Asymptotic statements are taken under the joint randomness of the
particle matrix and the resampling mechanism.

\subsection{Weights and exogeneity}

Let $w=(w_1,\dots,w_N)$ with $w_i\ge0$ and $\sum_iw_i=1$, and set
$\lambda_i:=Nw_i$ and $\rho_N:=\frac1N\sum_i\delta_{\lambda_i}$. Throughout,
exogeneity is understood as follows. For the asymptotic analysis the weights
are either deterministic, which is the convention adopted in the main text, or
random and independent of $Z_N$, in which case all weight-law convergences
hold in probability and the limit laws are deterministic. Multinomial and
residual resampling employ auxiliary randomness independent of $Z_N$, so that
the count vector is independent of the particle matrix conditionally on the
weights. No such independence is required for the finite-sample results.

\subsection{Counts}

A resampling scheme produces $\bm{n}=(n_1,\dots,n_N)\in\mathbb{Z}_{\ge0}^N$
satisfying
\begin{itemize}
  \item[(A1)] $\sum_in_i=N$ almost surely, and
  \item[(A2)] $\mathbb{E}[n_i\mid X_N,w]=Nw_i$.
\end{itemize}

\subsection{Second-moment matrices}

The weighted (pre-resampling) and resampled (post-resampling) empirical
second-moment matrices are
\begin{align}
  S_{w,N}&=\sum_iw_ix_ix_i^\top=\tfrac1N X_N\Lambda_NX_N^\top,\nonumber\\
  S_{r,N}&=\tfrac1N\sum_in_ix_ix_i^\top=\tfrac1N X_ND_NX_N^\top,\nonumber
\end{align}
with $\Lambda_N=\operatorname{diag}(\lambda_i)$ and
$D_N=\operatorname{diag}(n_i)$. Both matrices are uncentered. Centering
perturbs each matrix by a term of rank at most one and therefore leaves the
limiting spectral distribution unchanged as $d\to\infty$ whenever that
distribution exists. The finite-sample conditional inequalities are, however,
sensitive to centering, since
$\mathbb{E}[S_{r,\mathrm{centered}}\mid X_N,w]
=S_{w,\mathrm{centered}}-\operatorname{Cov}(\bar{x}_r\mid X_N,w)$.

\subsection{Asymptotic assumptions}

\begin{itemize}
  \item[(A3)] $d/N\to c\in(0,\infty)$.
  \item[(A4)] For multinomial resampling, $\rho_N\Rightarrow\rho$ with
    $\int\lambda^2\rho_N\to\int\lambda^2\rho<\infty$.
  \item[(A4R)] For residual resampling,
    $\pi_N:=\frac1N\sum_i\delta_{(\lfloor\lambda_i\rfloor,
    \operatorname{frac}(\lambda_i))}\Rightarrow\pi$ on
    $\mathbb{Z}_{\ge0}\times[0,1]$, with
    $\int\lambda^2\rho_N\to\int\lambda^2\rho<\infty$, where $\rho$ is the
    push-forward of $\pi$ under $(k,u)\mapsto k+u$.
  \item[(A5)] $H_N\Rightarrow H$ with $H$ compactly supported and
    $\sup_N\|\Sigma_N\|_{\mathrm{op}}<\infty$; we set
    $a_1:=\int\tau H(d\tau)$ and $m_{2,H}:=\int\tau^2H(d\tau)$.
\end{itemize}

We record two consequences of these assumptions. The second-moment
assumption implies $\max_iw_i\le(\sum_i\lambda_i^2)^{1/2}/N=O(N^{-1/2})\to0$,
so that diffuseness of the weights is automatic and need not be imposed
separately. Finiteness of the limiting first moments follows by uniform
integrability, giving $\int\lambda\,\rho(d\lambda)=1$ and
$\mathbb{E}_\pi(K+U)=1$. Assumption (A4) does not imply (A4R), because the
floor--fraction map is discontinuous at the positive integers; see
Theorem~\ref{thm:classification} and Section~\ref{sec:res}.

\subsection{Moment requirements}

The conditional identities of Section~\ref{sec:finite} require no particle
model. Convergence of the limiting spectral distribution requires independent,
identically distributed entries with finite fourth moment. Convergence of the
empirical second spectral moments requires, in addition, the count-tail
control established in the supplementary material, or else Gaussianity.
Extreme-eigenvalue statements invoke the extreme-spectrum theorem with entries
from a fixed distribution.

\section{Finite-Sample Spectral Inequalities}
\label{sec:finite}

All statements in this section are conditional on $(X_N,w)$, hold for every
$d$ and $N$, and apply to every scheme satisfying (A1)--(A2).

\begin{lemma}[Conditional unbiasedness identity]
\label{lem:unbiased}
$\mathbb{E}[S_{r,N}\mid X_N,w]=S_{w,N}$.
\end{lemma}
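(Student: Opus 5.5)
The identity is a one-line consequence of linearity of conditional expectation together with assumption (A2). The plan is to work directly from the representation $S_{r,N}=\frac1N\sum_i n_i x_i x_i^\top$ and to note that, conditionally on $(X_N,w)$, the outer products $x_ix_i^\top$ are fixed deterministic matrices. The only randomness left is in the count vector $\bm{n}$, which enters linearly.

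First, I take conditional expectations entrywise. The sum has finitely many terms, and each $n_i$ is integrable: by (A1), $0\le n_i\le N$ almost surely. Pulling the $(X_N,w)$-measurable factors $x_ix_i^\top$ outside gives
\begin{equation*}
  \mathbb{E}[S_{r,N}\mid X_N,w]=\frac1N\sum_{i=1}^N \mathbb{E}[n_i\mid X_N,w]\,x_ix_i^\top .
\end{equation*}

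Second, I substitute $\mathbb{E}[n_i\mid X_N,w]=Nw_i$ from (A2). This yields $\sum_i w_i x_ix_i^\top=S_{w,N}$. Equivalently, in matrix form, $\mathbb{E}[D_N\mid X_N,w]=\Lambda_N$, so that $\mathbb{E}[\frac1N X_ND_NX_N^\top\mid X_N,w]=\frac1N X_N\Lambda_NX_N^\top$.

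There is no genuine obstacle. The one point to state explicitly is the integrability and measurability justification for moving $x_ix_i^\top$ outside the conditional expectation. This is exactly why the statement is conditional on $(X_N,w)$ and requires no particle model and no independence between the counts and $Z_N$. I would also remark that only (A2) is used; the constraint (A1) serves merely to guarantee boundedness of the counts.
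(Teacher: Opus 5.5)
Your proof is correct and follows the paper's own argument: linearity of conditional expectation, pulling out the $(X_N,w)$-measurable outer products $x_ix_i^\top$, and substituting (A2). Your added remark that (A1) with $n_i\ge0$ gives $0\le n_i\le N$, so the conditional expectations exist, is a harmless refinement that the paper leaves implicit.
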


\begin{defi}[Conditional convex order\cite{voiculescu1987multiplication,bercovici1993free,rockafellar1970convex}]
\label{def:cxorder}
We write $S_{w,N}\preceq_{\rm cx}S_{r,N}$ conditionally on $(X_N,w)$ if
$F(S_{w,N})\le\mathbb{E}[F(S_{r,N})\mid X_N,w]$ almost surely for every convex
$F:\mathcal{S}_+^d\to\mathbb{R}$ for which the required expectations exist.
Convexity is understood in the ordinary affine sense on the positive
semidefinite cone; no operator convexity or L\"owner-order comparison is
asserted.
\end{defi}

\begin{theo}[Scheme-agnostic Jensen principle\cite{hansen1982jensen}]
\label{thm:jensen}
Under (A1)--(A2), $S_{w,N}\preceq_{\rm cx}S_{r,N}$ conditionally on
$(X_N,w)$.
\end{theo}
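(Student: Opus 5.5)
The plan is to combine Lemma~\ref{lem:unbiased} with the ordinary (finite-dimensional) conditional Jensen inequality. Conditionally on $(X_N,w)$, the matrix $X_N$ is fixed and the only randomness in $S_{r,N}=\frac1N X_N D_N X_N^\top$ comes from the count vector $\bm{n}$. By (A1), $\bm{n}$ takes values in the finite set $\{\bm{n}\in\mathbb{Z}_{\ge0}^N:\sum_i n_i=N\}$. Hence $S_{r,N}$ is, conditionally, a random element of $\mathcal{S}_+^d$ supported on finitely many points. Its conditional law is therefore a finite convex combination
\begin{equation*}
  \sum_j p_j\,\delta_{M_j}, \qquad M_j\in\mathcal{S}_+^d,\quad p_j\ge 0,\quad \sum_j p_j=1,
\end{equation*}
and all conditional expectations of functions of $S_{r,N}$ exist automatically. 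The integrability clause in Definition~\ref{def:cxorder} is thus vacuous here.

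The key steps run as follows. First, I would record that $\mathcal{S}_+^d$ is a convex cone in the finite-dimensional real vector space of symmetric matrices. It therefore contains every convex combination of the $M_j$, and in particular the barycenter $\sum_j p_jM_j=\mathbb{E}[S_{r,N}\mid X_N,w]$. Second, I would invoke Lemma~\ref{lem:unbiased}, which identifies this barycenter with $S_{w,N}$. Third, for any convex $F:\mathcal{S}_+^d\to\mathbb{R}$, the finite form of Jensen's inequality (by induction on the number of atoms, directly from the definition of convexity) gives
\begin{equation*}
  F(S_{w,N})=F\Bigl(\sum_j p_jM_j\Bigr)\le \sum_j p_j F(M_j)=\mathbb{E}[F(S_{r,N})\mid X_N,w].
\end{equation*}
This holds for every realization of $(X_N,w)$, hence almost surely, which is exactly the statement of Theorem~\ref{thm:jensen}.

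There is no real obstacle. The only point needing care is measure-theoretic: the conditional law must be well defined, and the inequality must hold almost surely rather than merely on average. This is handled by working with a regular conditional distribution of $\bm{n}$ given $(X_N,w)$, which exists because $\bm{n}$ lives in a finite set. Note also that the argument never uses a L\"owner-order or operator-convexity comparison, consistent with the remark in Definition~\ref{def:cxorder}. Should one wish to allow schemes without (A1), the finite-support step would be replaced by the general Jensen inequality for convex functions on finite-dimensional convex sets. That version uses lower semicontinuity of convex functions on the relative interior, or a supporting-hyperplane argument at $S_{w,N}$, together with the assumed existence of the expectations. Under (A1)--(A2) this is unnecessary.
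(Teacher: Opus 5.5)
Your proof is correct and follows the paper's argument: (A1) makes the conditional law of $\bm{n}$ finitely supported, Lemma~\ref{lem:unbiased} identifies the barycenter $\sum_\ell p_\ell S(\bm{n}^{(\ell)})$ with $S_{w,N}$, and finite Jensen gives the inequality. Your extra remarks, that the convexity of $\mathcal{S}_+^d$ places the barycenter in the domain and that the integrability clause is vacuous, are harmless refinements of the same route.
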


Theorem~\ref{thm:jensen} applies in particular to
$F(M)=\operatorname{tr}\varphi(M)$ for scalar convex $\varphi$, by Davis trace
convexity\cite{davis1957schwarz}. Admissible choices include $\varphi=x^q$ for all $q\ge1$ and
$\varphi=x\log x$, together with $F=\lambda_{\max}$ and
$F(M)=-\log\det(M+\varepsilon I)$ for $\varepsilon>0$; the last of these
states that the log-determinant decreases in conditional expectation. We note
that $x^q$ with $q>2$ is not operator convex on $(0,\infty)$, whereas trace
convexity remains available.

\begin{prop}[Exact second spectral moment cost\cite{bhatia1997matrix,marshall1979inequalities}]
\label{prop:cost}
With $G=X_N^\top X_N$, for any scheme satisfying (A1)--(A2),
\begin{align}
  &\mathbb{E}\big[\operatorname{tr}S_{r,N}^2-\operatorname{tr}S_{w,N}^2
    \mid X_N,w\big] \nonumber\\
  &\quad=\mathbb{E}\big[\|S_{r,N}-S_{w,N}\|_F^2 \mid X_N,w\big] \nonumber\\
  &\quad=\tfrac{1}{N^2}\operatorname{tr}\big[(G\circ G)
    \operatorname{Cov}(\bm{n}\mid X_N,w)\big]\ \ge\ 0.
  \label{eq:cost-identity}
\end{align}
The second expression admits an error-energy interpretation and renders the
nonnegativity immediate. For multinomial resampling,
\begin{align}
  &\mathbb{E}\big[\operatorname{tr}S_{r,N}^2-\operatorname{tr}S_{w,N}^2
    \mid X_N,w\big] \nonumber\\
  &\quad=\tfrac{1}{N}\sum_i w_i \|x_i x_i^\top - S_{w,N}\|_F^2 \nonumber\\
  &\quad=\tfrac{1}{N}\Big[\sum_i w_i \|x_i\|^4
    - \operatorname{tr}S_{w,N}^2\Big] \nonumber\\
  &\quad\ge \tfrac{1}{N}\operatorname{Var}_w\big(\|x_i\|^2\big)\ \ge\ 0.
  \label{eq:cost-multi}
\end{align}
\end{prop}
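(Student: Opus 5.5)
The plan is to write the resampling perturbation as a linear image of the centered count vector and then expand the Frobenius norm. Set $\bm{\varepsilon}:=\bm{n}-N w$, so that $S_{r,N}-S_{w,N}=\frac1N\sum_i\varepsilon_i x_ix_i^\top$. By (A2), $\mathbb{E}[\bm{\varepsilon}\mid X_N,w]=0$; this is exactly Lemma~\ref{lem:unbiased}.

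\textbf{First equality (Pythagorean step).} Expand
\begin{equation*}
\operatorname{tr}S_{r,N}^2=\operatorname{tr}S_{w,N}^2+2\operatorname{tr}\big[S_{w,N}(S_{r,N}-S_{w,N})\big]+\|S_{r,N}-S_{w,N}\|_F^2 .
\end{equation*}
Since $S_{w,N}$ is $(X_N,w)$-measurable, Lemma~\ref{lem:unbiased} kills the cross term in conditional expectation. This gives the first equality, and nonnegativity is then immediate.

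\textbf{Second equality.} Use $\operatorname{tr}(x_ix_i^\top x_jx_j^\top)=(x_i^\top x_j)^2=(G\circ G)_{ij}$ to get
\begin{equation*}
\|S_{r,N}-S_{w,N}\|_F^2=\tfrac1{N^2}\,\bm{\varepsilon}^\top(G\circ G)\bm{\varepsilon}.
\end{equation*}
Taking conditional expectation and using the mean-zero property yields $\frac1{N^2}\operatorname{tr}[(G\circ G)\operatorname{Cov}(\bm{n}\mid X_N,w)]$. Condition (A1) is not needed here beyond integrability; counts are bounded by $N$, so all expectations exist. Nonnegativity also follows from the Schur product theorem: $G\circ G\succeq0$ and the covariance is $\succeq0$, so the trace of their product is nonnegative.

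\textbf{Multinomial specialization.}
1. Insert $\operatorname{Cov}(\bm{n})=N(\operatorname{diag}(w)-ww^\top)$ into the second equality. The result is
\begin{equation*}
\tfrac1N\Big[\sum_i w_i\|x_i\|^4-\sum_{i,j}w_iw_j(x_i^\top x_j)^2\Big].
\end{equation*}
2. Identify the double sum as $\operatorname{tr}S_{w,N}^2$; this is the second displayed line.
3. For the first line, expand $\sum_iw_i\|x_ix_i^\top-S_{w,N}\|_F^2$ using $\sum_iw_i=1$ and $\sum_iw_ix_ix_i^\top=S_{w,N}$. The cross term collapses and the expansion reproduces the same quantity; it is the weighted variance of the random matrix $x_Ix_I^\top$ with $I\sim w$.

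\textbf{Final inequality.} It suffices to show $\operatorname{tr}S_{w,N}^2\le(\sum_iw_i\|x_i\|^2)^2$. This holds because $S_{w,N}\succeq0$ gives
\begin{equation*}
\operatorname{tr}S_{w,N}^2\le(\operatorname{tr}S_{w,N})^2=\Big(\sum_iw_i\|x_i\|^2\Big)^2 .
\end{equation*}
Hence $\sum_iw_i\|x_i\|^4-\operatorname{tr}S_{w,N}^2\ge\sum_iw_i\|x_i\|^4-(\sum_iw_i\|x_i\|^2)^2=\operatorname{Var}_w(\|x_i\|^2)\ge0$.

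The main obstacle is bookkeeping: matching the index double sum with $\operatorname{tr}S_{w,N}^2$ in the multinomial case, and choosing the right comparison for the variance bound. Otherwise the argument is routine.
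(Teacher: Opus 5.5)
Your proposal is correct and follows the paper's proof essentially step for step. The shared steps are the Pythagorean expansion with the cross term killed by Lemma~\ref{lem:unbiased}, the quadratic form $\frac{1}{N^2}\bm{\varepsilon}^\top(G\circ G)\bm{\varepsilon}$ with the Schur product theorem, and substitution of the multinomial covariance together with the weighted-variance identity for $x_ix_i^\top$. Your justification of the final bound via $\operatorname{tr}S_{w,N}^2\le(\operatorname{tr}S_{w,N})^2$ for $S_{w,N}\succeq0$ is also valid, and it supplies a step that the paper's proof leaves implicit.
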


\begin{rem}[Vanishing of the cost]
\label{rem:vanish}
The cost is zero precisely when resampling leaves the second-moment matrix
unchanged almost surely. Two distinct mechanisms produce this situation:
deterministic counts, as arise for residual resampling when every
$\lambda_i\in\mathbb{Z}_{\ge0}$, or for stratified, systematic and residual
resampling under uniform weights; and degenerate particle configurations. For
multinomial resampling, the geometric form \eqref{eq:cost-multi} vanishes if
and only if $x_ix_i^\top=S_{w,N}$ for every $i$ with $w_i>0$, a condition that
covers the all-zero and the $\pm v$ collinear configurations.
\end{rem}

\begin{propprime}[Population-averaged cost and diffuse-weight limit]
\label{prop:popcost}
Under multinomial resampling, independent and identically distributed entries
with kurtosis $\kappa_4:=\mathbb{E}Z_{11}^4-3$, and weights independent of the
particle matrix, we have exactly, with $s_{2,N}:=\sum_iw_i^2$,
\begin{equation}
  \begin{aligned}
    &\tfrac{1}{d}\mathbb{E}\big[\operatorname{tr}S_{r,N}^2
      -\operatorname{tr}S_{w,N}^2 \mid w\big]\\
    &= \tfrac{1-s_{2,N}}{Nd}\Big[(\operatorname{tr}\Sigma_N)^2
      + \operatorname{tr}\Sigma_N^2
      + \kappa_4\textstyle\sum_{j=1}^d (\Sigma_N)_{jj}^2\Big].
  \end{aligned}
  \label{eq:popcost}
\end{equation}
Under (A3) and (A5) the bracketed terms satisfy
$\operatorname{tr}\Sigma_N^2=O(d)$ and $\sum_j(\Sigma_N)_{jj}^2=O(d)$, whence
\begin{align}
  \tfrac1d\mathbb{E}[\mathrm{cost}\mid w]
    &=c_N\big(\tfrac{\operatorname{tr}\Sigma_N}{d}\big)^2(1-s_{2,N})
      +O(N^{-1})\nonumber\\
    &=c\,a_1^2(1-s_{2,N})+o(1),
  \label{eq:popcost-asym}
\end{align}
which reduces to $c\,a_1^2$ in the diffuse regime. The statement is specific
to multinomial resampling; residual resampling under exactly uniform weights,
for instance, leaves the matrix unchanged.
\end{propprime}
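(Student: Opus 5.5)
The plan is to start from the multinomial form \eqref{eq:cost-multi} of Proposition~\ref{prop:cost}, which gives, conditionally on $(X_N,w)$,
\[
\mathbb{E}[\mathrm{cost}\mid X_N,w]=\tfrac1N\Big[\textstyle\sum_i w_i\|x_i\|^4-\operatorname{tr}S_{w,N}^2\Big].
\]
We then average over $Z_N$ with $w$ held fixed. Since the weights are independent of the particle matrix, the tower property reduces everything to two Gaussian-free moment computations for quadratic forms in i.i.d.\ vectors with variance $1$ and kurtosis $\kappa_4$.

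\textbf{Step 1 (fourth moment of a norm).} With $x_i=\Sigma_N^{1/2}z_i$ we have $\|x_i\|^2=z_i^\top\Sigma_N z_i$. The standard quadratic-form identity then gives
\[
\mathbb{E}\|x_i\|^4=(\operatorname{tr}\Sigma_N)^2+2\operatorname{tr}\Sigma_N^2+\kappa_4\textstyle\sum_j(\Sigma_N)_{jj}^2=:m_4 .
\]
This follows by expanding $\sum_{a,b,c,e}A_{ab}A_{ce}\mathbb{E}[z_az_bz_cz_e]$ with $A=\Sigma_N$ and using $\mathbb{E}z^4=3+\kappa_4$. Hence $\mathbb{E}[\sum_iw_i\|x_i\|^4\mid w]=m_4$.

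\textbf{Step 2 (second spectral moment of $S_{w,N}$).} Write
\[
\operatorname{tr}S_{w,N}^2=\textstyle\sum_{i,k}w_iw_k(x_i^\top x_k)^2 .
\]
The diagonal terms contribute $s_{2,N}m_4$. For $i\ne k$, independence gives $\mathbb{E}(x_i^\top x_k)^2=\mathbb{E}[x_i^\top\Sigma_N x_i]=\operatorname{tr}\Sigma_N^2$, and the off-diagonal weights sum to $1-s_{2,N}$. Subtracting,
\[
m_4(1-s_{2,N})-(1-s_{2,N})\operatorname{tr}\Sigma_N^2=(1-s_{2,N})\big[(\operatorname{tr}\Sigma_N)^2+\operatorname{tr}\Sigma_N^2+\kappa_4\textstyle\sum_j(\Sigma_N)_{jj}^2\big].
\]
Dividing by $Nd$ gives \eqref{eq:popcost} exactly.

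\textbf{Step 3 (asymptotics).} Under (A5), $\operatorname{tr}\Sigma_N^2\le d\|\Sigma_N\|_{\rm op}^2$ and $0\le(\Sigma_N)_{jj}\le\|\Sigma_N\|_{\rm op}$, so both sums are $O(d)$. After division by $Nd$ they are therefore $O(N^{-1})$, since $|\kappa_4|<\infty$ is fixed. The leading term is $\frac{(\operatorname{tr}\Sigma_N)^2}{Nd}=c_N(\operatorname{tr}\Sigma_N/d)^2$. Here $\operatorname{tr}\Sigma_N/d=\int\tau\,H_N(d\tau)\to a_1$, because $H_N\Rightarrow H$ with supports in a common compact set, so $\tau\mapsto\tau$ is bounded and continuous there. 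Combined with $c_N\to c$ by (A3) and $0\le1-s_{2,N}\le1$, this yields \eqref{eq:popcost-asym}.

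\textbf{Step 4 (diffuse regime).} Finally, $s_{2,N}=N^{-2}\sum_i\lambda_i^2=N^{-1}\int\lambda^2\rho_N=O(N^{-1})$ by (A4), which gives the limit $c\,a_1^2$. The only step requiring care is the fourth-moment bookkeeping in Step~1, in particular keeping the kurtosis term as $\sum_j(\Sigma_N)_{jj}^2$ rather than a trace. Everything else is routine.
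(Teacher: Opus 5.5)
Your proposal is correct and follows the paper's own argument. You start from the multinomial specialization of Proposition~\ref{prop:cost}, insert the quadratic-form moments $\mathbb{E}\|x_i\|^4=(\operatorname{tr}\Sigma_N)^2+2\operatorname{tr}\Sigma_N^2+\kappa_4\sum_j(\Sigma_N)_{jj}^2$ and $\mathbb{E}G_{ik}^2=\operatorname{tr}\Sigma_N^2$ for $i\neq k$, and then bound the two $O(d)$ terms by $\|\Sigma_N\|_{\mathrm{op}}^2$ under (A5), exactly as the paper does. Your Step~4, which uses $s_{2,N}=N^{-1}\int\lambda^2\rho_N\to0$ under (A4), makes explicit the diffuse-regime reduction that the paper leaves implicit.
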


\section{Count Laws and the Spectral Map}
\label{sec:count}

For a probability law $\rho$ on $[0,\infty)$ with mean $1$, the mixed-Poisson
law is defined by
\begin{equation}
  \operatorname{MixPoi}(\rho):\quad
  \nu(\{k\})=\int_0^\infty e^{-\lambda}\tfrac{\lambda^k}{k!}\,\rho(d\lambda),
  \quad k\in\mathbb{Z}_{\ge0},
  \label{eq:mixpoi}
\end{equation}
and has mean $1$, zero mass $\mathcal{L}_\rho(1)$, and second moment
$b_2:=\int k^2\nu(dk)=1+r_2$.

\begin{theo}[Multinomial count law]
\label{thm:multi-count}
Under (A4), for multinomial resampling, in probability,
\begin{equation}
  \nu_N:=\tfrac1N\sum_i\delta_{n_i}\Rightarrow\nu=\operatorname{MixPoi}(\rho),
  \qquad \tfrac1N\sum_in_i^2\to b_2.
  \label{eq:multi-count}
\end{equation}
Exactly uniform weights, and more generally any weight sequence satisfying
(A4) with $\rho=\delta_1$, yield $\nu=\operatorname{Poi}(1)$, so that a
fraction $e^{-1}$ of the particles is not selected.
\end{theo}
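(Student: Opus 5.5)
The plan is to prove weak convergence of $\nu_N$ by the method of moments on indicator functions. For each fixed $k\in\mathbb{Z}_{\ge0}$ I will show
\[
F_{N,k}:=\tfrac1N\sum_i\mathbf 1\{n_i=k\}\to\nu(\{k\})
\]
in probability, via a first-moment plus variance (second-moment) argument. Since the $n_i$ are integer-valued, convergence of every atom to the atoms of a probability law $\nu$ gives weak convergence; tightness is automatic because $\sum_k\nu(\{k\})=1$.

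\emph{Step 1 (mean).} Under multinomial resampling $n_i\sim\mathrm{Bin}(N,w_i)$ marginally, so
\[
\E F_{N,k}=\tfrac1N\sum_i\binom Nk w_i^k(1-w_i)^{N-k}.
\]
I will compare each term with the Poisson probability $e^{-\lambda_i}\lambda_i^k/k!$. By Le Cam's inequality, the total-variation distance between $\mathrm{Bin}(N,w_i)$ and $\mathrm{Poi}(\lambda_i)$ is at most $Nw_i^2=\lambda_i^2/N$. Averaging gives an error of at most $\frac1{N^2}\sum_i\lambda_i^2=O(1/N)$ by (A4). It then remains to show
\[
\tfrac1N\sum_i e^{-\lambda_i}\lambda_i^k/k!=\int e^{-\lambda}\lambda^k/k!\,\rho_N(d\lambda)\to\nu(\{k\}).
\]
This follows from $\rho_N\Rightarrow\rho$, because $\lambda\mapsto e^{-\lambda}\lambda^k/k!$ is bounded and continuous.

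\emph{Step 2 (variance).} The main obstacle is the dependence among the counts, since the multinomial vector is not independent. I would handle it by Poissonization: let $M\sim\mathrm{Poi}(N)$ be independent, and draw $M$ multinomial samples. The resulting counts $\tilde n_i\sim\mathrm{Poi}(\lambda_i)$ are then independent, so $\Var(\tilde F_{N,k})\le 1/N$. To return to the original counts, couple the two schemes by using the same i.i.d. draws from $w$ and taking the first $\min(N,M)$ of them. Then $\sum_i|n_i-\tilde n_i|=|M-N|=O_P(\sqrt N)$, so at most $O_P(\sqrt N)$ indices differ and $|F_{N,k}-\tilde F_{N,k}|=O_P(N^{-1/2})$. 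Combining this with Step 1 applied to $\tilde F_{N,k}$, whose mean is exactly the Poisson average, gives $F_{N,k}\to\nu(\{k\})$ in probability. As an alternative, I could compute $\mathrm{Cov}(\mathbf 1\{n_i=k\},\mathbf 1\{n_j=k\})$ directly from the trinomial law and show it is $O(1/N)$ uniformly, but the coupling argument is cleaner.

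\emph{Step 3 (second moment).} Since $\E[n_i^2]=Nw_i(1-w_i)+N^2w_i^2=\lambda_i+\lambda_i^2-\lambda_i^2/N$, we get
\[
\E\tfrac1N\sum_in_i^2=1+\int\lambda^2\rho_N-\tfrac1N\textstyle\sum_i w_i\lambda_i\to1+r_2=b_2
\]
by (A4). For concentration, I bound $\Var(\frac1N\sum n_i^2)$. This needs fourth moments of the Poisson-like counts, $\E n_i^4\lesssim\lambda_i^4+\lambda_i$, which (A4) alone does not control. So instead of a direct variance bound I would truncate. Weak convergence in probability gives $\frac1N\sum_i (n_i\wedge M)^2\to\int (k\wedge M)^2\nu$. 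The tail $\frac1N\sum_i n_i^2\mathbf 1\{n_i>M\}$ has expectation bounded by
\[
\tfrac1N\sum_i\E[n_i^2\mathbf 1\{n_i>M\}],
\]
which is small uniformly in $N$ for large $M$. This uniform smallness comes from the uniform integrability of $\lambda^2$ under $\rho_N$, which is implied by $\int\lambda^2\rho_N\to r_2$ together with weak convergence, plus Poisson tail bounds at small $\lambda_i$. Letting $M\to\infty$ and using Markov's inequality on the tail then gives $\frac1N\sum_in_i^2\to b_2$ in probability. The uniform-integrability step is the delicate one; I will split the indices according to whether $\lambda_i\le L$ or $\lambda_i>L$.

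\emph{Final assertion.} For $\rho=\delta_1$ the definition \eqref{eq:mixpoi} gives $\operatorname{MixPoi}(\delta_1)=\mathrm{Poi}(1)$. Its zero mass is $e^{-1}=\mathcal L_{\delta_1}(1)$, which is the asymptotic fraction of unselected particles.
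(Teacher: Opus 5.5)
Your proof is correct, but it takes a different route from the paper in both probabilistic steps. For the count law, the paper works with an arbitrary bounded test function $f$. It handles the mean by the per-particle bound $d_{\mathrm{TV}}(\mathrm{Bin}(N,\lambda/N),\operatorname{Poi}(\lambda))\le\lambda^2/N$, truncating $\lambda$ at a fixed level. It gets concentration from bounded differences on the i.i.d.\ ancestor labels: relabelling one draw changes two counts, so $\frac1N\sum_i f(n_i)$ moves by $O(\|f\|_\infty/N)$. You instead reduce to atoms and obtain concentration from the Poissonization coupling, which the paper reserves for the spectral transfer. This is equally valid. Your averaged Le Cam error $\sum_i w_i^2=O(1/N)$, or simply the exact Poisson mean of $\tilde F_{N,k}$, also makes the paper's truncation of $\lambda$ unnecessary.

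For the second moment, the paper computes the factorial form $B_N=\frac1N\sum_i n_i(n_i-1)$ exactly. It uses $\E[B_N\mid w]=(N-1)\sum_i w_i^2$ and $\Var(B_N\mid w)\le 2\sum_i w_i^2+4N\sum_i w_i^3$. The bound vanishes because $N\sum_i w_i^3\le(\max_i w_i)\,N\sum_i w_i^2$ and $\max_i w_i\le(\sum_i\lambda_i^2)^{1/2}/N\to0$.

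Your stated reason for avoiding a direct variance bound is mistaken. After centering, the $\lambda_i^4$ terms cancel and only third-order terms survive; for independent Poisson counts, $\Var(\tilde n_i^2)=4\lambda_i^3+6\lambda_i^2+\lambda_i$. Moreover $\frac1{N^2}\sum_i\lambda_i^3\le\frac{\max_i\lambda_i}{N}\cdot\frac1N\sum_i\lambda_i^2\to0$ under (A4), so (A4) does control it.

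This is not a gap, because your truncation-plus-uniform-integrability argument also works. On $\{\lambda_i\le L\}$, use $\E[n_i^2\mathbf 1\{n_i>M\}]\le\E n_i^3/M$, with binomial factorial moments bounded by $\lambda_i^k$. On $\{\lambda_i>L\}$, use $\E n_i^2\le\lambda_i+\lambda_i^2$ together with uniform integrability of $\lambda^2$ under $\rho_N$. The trade-off is this: the paper's collision computation is shorter and gives an explicit rate. Your route uses only the weak limit already proved plus marginal moment bounds, so it adapts more readily to other count mechanisms.
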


The transfer of the classical theorem to a random and unbounded count diagonal
proceeds by a Poissonization coupling together with truncation and rank
control; the coupling controls weak spectral convergence but does not by itself
control second spectral moments. Both steps are stated and proved in the
supplementary material.

\begin{defi}[Covariance-side spectral map\cite{silverstein1995analysis}]
\label{def:Tc}
For a probability law $\eta$ on $[0,\infty)$, define $\mathcal{T}_c(\eta)$ as
the probability law determined by
\begin{equation}
  c\,\mathcal{T}_c(\eta)+(1-c)\,\delta_0
    = D_c\big(\eta\boxtimes\mathrm{MP}(1/c)\big),
  \label{eq:Tc}
\end{equation}
or equivalently by the fixed-point equation
\begin{align}
  z&=-\tfrac1{m(z)}+\int\tfrac{t}{1+c\,t\,m(z)}\,\eta(dt),\nonumber\\
  m(z)&=\int\tfrac{\mathcal{T}_c(\eta)(dx)}{x-z},\qquad z\in\mathbb{C}^+,
  \label{eq:fp}
\end{align}
taken with the branch of a probability-measure Stieltjes transform, namely
$m:\mathbb{C}^+\to\mathbb{C}^+$ with $\lim_{v\to\infty}iv\,m(iv)=-1$. Mean one
is not required of $\eta$; the truncated count laws $\nu^{(M)}$ have mean at
most one and are never rescaled.
\end{defi}

Three properties of Definition~\ref{def:Tc} are used repeatedly. First,
\eqref{eq:Tc} is an identity of measures rather than a convex-mixture
representation when $c>1$; equivalently
$\mathcal{T}_c(\eta)=\frac1cD_c(\eta\boxtimes\mathrm{MP}(1/c))
+(1-\frac1c)\delta_0$ as measures, the negative coefficient for $c<1$ being
absorbed by the zero atom of the product law, in accordance with
Proposition~\ref{prop:atom}. Second, $\mathcal{T}_c$ is injective on all
probability laws on $[0,\infty)$. Third, for numerical evaluation the
companion-side transform is best computed from $m$ through the auxiliary
equation $z=-c/u+\int t(1+tu)^{-1}\eta(dt)$ with $u=cm$, followed by
$e=u+(c-1)/z$. The classical companion fixed-point equation governs a
different object, namely the companion transform of the $W$-side model, and
several mutually incompatible forms coincide at $\eta=\delta_1$. Recovery of
the Marchenko--Pastur law alone therefore does not validate a solver.

\begin{theo}[Residual count law]
\label{thm:res-count}
Under (A4R), for residual resampling with
$n_i=\lfloor\lambda_i\rfloor+\tilde n_i$, where
$\tilde{\bm{n}}\sim\mathrm{Multinomial}(R_N;u/R_N)$ with
$u_i:=\operatorname{frac}(\lambda_i)$ and $R_N:=\sum_iu_i$ when $R_N>0$, and
$\tilde{\bm{n}}:=0$ when $R_N=0$, we have in probability
\begin{equation}
  \nu_N\Rightarrow\nu_{\mathrm{res}}:=\mathcal{L}(K+P_U),
  \qquad (K,U)\sim\pi,
  \label{eq:res-count}
\end{equation}
where $P_U\mid(K,U)\sim\operatorname{Poi}(U)$ is a conditional Poisson
variable rather than an independent one of fixed parameter, and
$\frac1N\sum_in_i^2\to b_2(\nu_{\mathrm{res}})=r_2+\mathbb{E}_\pi U$. All
regimes of $R_N$ are covered, namely $R_N=0$, $R_N\to\infty$ with $R_N=o(N)$,
and $R_N\asymp N$.
\end{theo}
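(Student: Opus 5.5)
The plan is to work with test functions $f(k)=\mathbf 1\{k=j\}$ for each fixed $j\in\mathbb{Z}_{\ge0}$, which suffice for weak convergence on $\mathbb{Z}_{\ge0}$. The goal is to show $\frac1N\sum_i\mathbf 1\{n_i=j\}\to\nu_{\mathrm{res}}(\{j\})$ in probability by computing the conditional mean and then bounding the variance. Since $\lfloor\lambda_i\rfloor$ is deterministic, everything reduces to the law of the residual multinomial vector $\tilde{\bm n}$. First I record that $R_N/N=\int u\,\pi_N(dk,du)$. If $\pi(\{u=1\})=0$, this tends to $\mathbb{E}_\pi U$ by weak convergence, since $u$ is bounded. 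In general the limit $\mathbb{E}_\pi U$ must be read with $u$ as the second coordinate of the limit, so fractions accumulating near $1$ are counted with $U=1$; this is exactly why $\pi$ lives on $[0,1]$ rather than $[0,1)$.

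I split into regimes. If $R_N=0$, then $\tilde{\bm n}=0$, all $\lambda_i$ are integers, and $\pi$ is supported on $u=0$. Hence $\nu_N=\frac1N\sum_i\delta_{\lfloor\lambda_i\rfloor}$, which converges to $\mathcal L(K)=\mathcal L(K+P_0)$. If $R_N\to\infty$, each $\tilde n_i$ is marginally $\mathrm{Bin}(R_N,u_i/R_N)$. By Le Cam's inequality, $d_{TV}(\mathrm{Bin}(R_N,u_i/R_N),\mathrm{Poi}(u_i))\le u_i^2/R_N\le 1/R_N\to0$ uniformly in $i$. Therefore
\[
\mathbb{E}\,\nu_N(\{j\})=\int \mathbb P(k+\mathrm{Poi}(u)=j)\,\pi_N(dk,du)+O(R_N^{-1}).
\]
The integrand is continuous and bounded in $(k,u)$ on $\mathbb{Z}_{\ge0}\times[0,1]$, since the Poisson probabilities are continuous in $u$ up to $u=1$. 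So the mean converges to $\nu_{\mathrm{res}}(\{j\})$. The remaining case is $R_N$ bounded but nonzero along a subsequence. Then $R_N/N\to0$, so $\pi$ gives $U=0$ almost surely, and only $\sum_i\tilde n_i=R_N=o(N)$ particles are perturbed. Hence $\nu_N$ is within total variation $O(R_N/N)$ of the floor law, which again matches $\mathcal L(K)$. Passing to subsequences combines the cases.

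For concentration, I use the negative association of multinomial counts: the indicators $\mathbf 1\{\tilde n_i=j-\lfloor\lambda_i\rfloor\}$ are not monotone, so instead I compute the pairwise covariances directly. The pair $(\tilde n_i,\tilde n_{i'})$ is trinomial, and by a two-coordinate Poisson approximation, $|\mathrm{Cov}(\mathbf 1\{\tilde n_i=a\},\mathbf 1\{\tilde n_{i'}=b\})|=O(1/R_N)$ uniformly when $R_N\to\infty$. This gives
\[
\Var\nu_N(\{j\})\le \tfrac1N+O(R_N^{-1})\to0.
\]
Chebyshev's inequality then yields convergence in probability, and hence $\nu_N\Rightarrow\nu_{\mathrm{res}}$ in probability.

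For the second moment, I expand $\frac1N\sum_in_i^2=\frac1N\sum_i\lfloor\lambda_i\rfloor^2+\frac2N\sum_i\lfloor\lambda_i\rfloor\tilde n_i+\frac1N\sum_i\tilde n_i^2$. The conditional means are $\frac1N\sum_i[\lfloor\lambda_i\rfloor^2+2\lfloor\lambda_i\rfloor u_i+u_i^2]+\frac1N\sum_iu_i(1-u_i/R_N)$, which equals $\frac1N\sum_i\lambda_i^2+R_N/N-O(1/N)$ and tends to $r_2+\mathbb{E}_\pi U$. Variances are controlled by multinomial moment formulas together with $\max_i\lambda_i=O(\sqrt N)$ and $\sum_i\lambda_i^2=O(N)$. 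I expect the main obstacle to be this variance of the cross and square terms, since $\lfloor\lambda_i\rfloor$ is unbounded. I would bound $\Var(\sum_i\lfloor\lambda_i\rfloor\tilde n_i)\le\sum_i\lfloor\lambda_i\rfloor^2u_i=O(N)$ using negative covariances, and $\Var(\sum_i\tilde n_i^2)=O(N)$ using the fact that $\tilde n_i$ has uniformly bounded Poisson-like moments. Both bounds are $o(N^2)$, which gives the claimed convergence.
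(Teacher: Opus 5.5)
Your argument is correct, but its weak-convergence part follows a different route from the paper's.

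The paper splits on $\mathbb{E}_\pi U>0$ versus $\mathbb{E}_\pi U=0$.
In the first case it uses the label-form Poissonization coupling of S.5. Under that coupling the residual counts differ from independent $\operatorname{Poi}(u_i)$ counts in at most $|T-R_N|=O_{\mathbb P}(\sqrt{R_N})=o(N)$ coordinates. Concentration then comes from bounded differences on the $R_N$ residual ancestor labels, giving variance $O(R_N/N^2)$.
In the second case it uses only $\#\{i:n_i\neq\lfloor\lambda_i\rfloor\}\le R_N=o(N)$. This already covers $R_N\to\infty$ with $R_N=o(N)$, so no subsequence argument is needed.

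You instead control the mean by marginal Le Cam bounds and the variance by a bivariate Poisson approximation of the trinomial pair. That step is correct: the total-variation distance to $\operatorname{Poi}(u_i)\otimes\operatorname{Poi}(u_{i'})$ is at most $(u_i+u_{i'})^2/R_N$. Your route is more elementary, since it needs neither the coupling nor a concentration inequality. The cost is that your bound $\operatorname{Var}\nu_N(\{j\})\le 1/N+O(1/R_N)$ is only useful when $R_N\to\infty$. That is why you need the separate bounded-$R_N$ case and the subsequence step. Both routes rest on the same point: continuity of $(k,u)\mapsto\mathbb P(k+\operatorname{Poi}(u)=j)$ up to $u=1$, which is where the joint limit $\pi$, rather than $\rho$, enters.

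Your second-moment argument coincides with the paper's. Two small tightenings are worth making.
First, the caveat $\pi(\{u=1\})=0$ is unnecessary. The map $(k,u)\mapsto u$ is bounded and continuous on $\mathbb{Z}_{\ge0}\times[0,1]$, so $R_N/N\to\mathbb{E}_\pi U$ always.
Second, for $\operatorname{Var}(\sum_i\tilde n_i^2)$, bounded moments control only the diagonal terms. You should also note that $\operatorname{Cov}(\tilde n_i^2,\tilde n_{i'}^2)\le 0$ for $i\neq i'$. This follows from negative association of multinomial counts, because $x\mapsto x^2$ is nondecreasing on $\mathbb{Z}_{\ge0}$. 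It gives $\operatorname{Var}(\sum_i\tilde n_i^2)\le\sum_i\mathbb{E}\tilde n_i^4\le CR_N$.
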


\begin{theo}[Classification of residual limits under $\rho=\delta_1$]
\label{thm:classification}
If $\rho_N\Rightarrow\delta_1$ and (A4R) holds with limit $\pi$, then $\pi$ is
supported on $\{K+U=1\}$, which within $\mathbb{Z}_{\ge0}\times[0,1]$ consists
of the two points $(1,0)$ and $(0,1)$. Consequently
\begin{align}
  \pi&=(1-\alpha)\delta_{(1,0)}+\alpha\delta_{(0,1)},\nonumber\\
  \nu_{\mathrm{res}}&=(1-\alpha)\delta_1+\alpha\operatorname{Poi}(1),
    \qquad \alpha\in[0,1],
  \label{eq:classification}
\end{align}
with $\mathbb{E}_\pi U=\alpha$, $p_{\mathrm{res}}=1-\alpha/e$, and
second-moment cost $c\,a_1^2\alpha$.

Every $\alpha\in[0,1]$ is realizable by a sequence satisfying both (A4) and
(A4R). The value $\alpha=0$ is attained by exactly uniform weights. The value
$\alpha=1$ is attained by the bounded two-level endpoint construction
$\lambda_i=1-\frac1N$ for $i<N$ and $\lambda_N=2-\frac1N$, which has mean
exactly one and satisfies $\max_i\lambda_i<2$; the phenomenon is therefore
driven by the integer boundary and not by heavy tails. Each
$\alpha\in(0,1)$ is attained by placing a fraction $\alpha_N\to\alpha$ of
particles at $1-\varepsilon_N$ and the remainder at
$1+\frac{\alpha_N}{1-\alpha_N}\varepsilon_N$, with mean exactly one,
$\alpha_NN$ integral and $\varepsilon_N\downarrow0$. In the absence of
convergence of the boundary proportion, distinct subsequences may yield
distinct values of $\alpha$. Weight sequences converging to $\delta_1$ in law therefore admit residual count limits ranging from the deterministic identity to the full multinomial count law: vanishing weight heterogeneity does not imply vanishing residual
randomization.
\end{theo}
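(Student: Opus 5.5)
The proof splits into a structural part (identifying $\pi$) and a constructive part (realizing every $\alpha$).

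\emph{Structural part.} Let $\phi(k,u)=k+u$. This map is continuous on $\mathbb{Z}_{\ge0}\times[0,1]$, because the first coordinate is discrete. Hence (A4R) and the continuous mapping theorem give $\rho_N=\phi_\#\pi_N\Rightarrow\phi_\#\pi$. By uniqueness of weak limits and the hypothesis $\rho_N\Rightarrow\delta_1$, we get $\phi_\#\pi=\delta_1$, so $\pi$ is concentrated on $\{k+u=1\}$. Within the state space the only solutions are $(1,0)$ and $(0,1)$; note that $(0,1)$ is available precisely because the second coordinate is permitted to attain $1$. Setting $\alpha:=\pi(\{(0,1)\})$ yields the stated form of $\pi$. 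The remaining identities then follow by substitution.
\begin{itemize}
\item Theorem~\ref{thm:res-count} gives $\nu_{\mathrm{res}}=\mathcal{L}(K+P_U)$. On $(1,0)$ this is $\delta_1$, since $\operatorname{Poi}(0)=\delta_0$. On $(0,1)$ it is $\operatorname{Poi}(1)$.
\item $\mathbb{E}_\pi U=\alpha$ directly.
\item The zero mass is $\nu_{\mathrm{res}}(\{0\})=\alpha e^{-1}$, so $p_{\mathrm{res}}=1-\alpha/e$.
\item The second-moment cost $c\,a_1^2\,\mathbb{E}_\pi U=c\,a_1^2\alpha$ is quoted from the residual second-moment increment announced in the introduction and derived in Section~\ref{sec:res}. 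As a consistency check, $b_2=r_2+\mathbb{E}_\pi U=1+\alpha$.
\end{itemize}

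\emph{Constructive part.} For each construction we must verify three things: that $\rho_N\Rightarrow\delta_1$, that $\int\lambda^2\rho_N\to1$, and that $\pi_N$ converges to the claimed limit. Together these give both (A4) and (A4R).
\begin{itemize}
\item For uniform weights, all $\lambda_i=1$, so $\pi_N=\delta_{(1,0)}$ and $\alpha=0$.
\item For the two-level endpoint construction, $N-1$ particles have floor $0$ and fraction $1-\frac1N\to1$, and one particle has pair $(1,1-\frac1N)$. Hence $\pi_N\Rightarrow\delta_{(0,1)}$. The second moment is $\frac1N[(N-1)(1-\frac1N)^2+(2-\frac1N)^2]\to1$, and the mean is exactly one by direct arithmetic.
\item For $\alpha\in(0,1)$, the low particles have pair $(0,1-\varepsilon_N)\to(0,1)$. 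Once $\frac{\alpha_N}{1-\alpha_N}\varepsilon_N<1$, the high particles have pair $(1,\frac{\alpha_N}{1-\alpha_N}\varepsilon_N)\to(1,0)$. Thus $\pi_N\Rightarrow(1-\alpha)\delta_{(1,0)}+\alpha\delta_{(0,1)}$. The mean equals $\alpha_N(1-\varepsilon_N)+(1-\alpha_N)+\alpha_N\varepsilon_N=1$. The values are bounded and converge to $1$, so the second moments converge to $1$.
\end{itemize}
One also checks that the proportion of high particles, $1-\alpha_N$, makes $(1-\alpha_N)N$ an integer, which follows from $\alpha_NN$ being integral. For the subsequence remark, interleave two such constructions with different $\alpha$: the weights still satisfy (A4), while $\pi_N$ has two distinct subsequential limits.

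\emph{Main obstacle.} The only genuinely delicate point is the structural step. It relies on the convention that $u$ ranges over the closed interval $[0,1]$, so that $\phi$ is continuous on a space in which the limit $(0,1)$ is representable. Under the raw floor--fraction map on $[0,\infty)$, the approach $1-\varepsilon\to1$ would not be captured. I would therefore state this compactification explicitly, together with the continuity of $\phi$ there, before invoking the continuous mapping theorem. Beyond that, the proof relies only on Theorem~\ref{thm:res-count} and routine checks.
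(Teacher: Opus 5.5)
Your proposal is correct and follows essentially the same route as the paper's proof. The paper also pushes $\pi_N$ forward under the continuous map $(k,u)\mapsto k+u$ to force $\pi$ onto $\{(1,0),(0,1)\}$, then reads off $\nu_{\mathrm{res}}$, $\mathbb{E}_\pi U$, $p_{\mathrm{res}}$ and the cost from Theorem~\ref{thm:res-count} and Proposition~\ref{prop:res-cost}, and checks the three constructions directly through their means, second moments and floor--fraction pairs; your explicit interleaving argument for the subsequence remark and the condition $\frac{\alpha_N}{1-\alpha_N}\varepsilon_N<1$ are small details the paper leaves implicit.
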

\section{The Limiting Spectral Distribution under Multinomial Resampling}
\label{sec:lsd}

\begin{theo}[Limiting spectral distribution; identity population]
\label{thm:lsd-id}
Assume multinomial resampling, (A3)--(A4), $\Sigma_N=I_d$, and the entry
convention of Section~\ref{sec:model} with finite fourth moment. Then the
empirical spectral law of $S_{r,N}$ converges in probability to
$\mu_Q=\mathcal{T}_c(\nu)$ with $\nu=\operatorname{MixPoi}(\rho)$.
\end{theo}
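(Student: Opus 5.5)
We condition on the count vector and use the exogeneity convention of Section~\ref{sec:model}. Given $w$, the multinomial counts $\bm n$ are independent of $Z_N$, and the spectrum of $S_{r,N}$ is invariant under permuting the columns of $Z_N$ together with the entries of $D_N$. Hence $S_{r,N}=\frac1N Z_ND_NZ_N^\top$ is, conditionally on $\bm n$, a weighted sample-covariance matrix with deterministic diagonal weight matrix $D_N$. Theorem~\ref{thm:multi-count} gives $\nu_N\Rightarrow\operatorname{MixPoi}(\rho)$ in probability. We pass to an almost surely convergent subsequence argument: every subsequence has a further subsequence along which $\nu_N\Rightarrow\nu$ and $\frac1N\sum_i n_i^2\to b_2$ for almost every realization of the counts. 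It then suffices to prove convergence of the conditional spectral law for such a fixed count sequence, because convergence in probability follows from the subsequence criterion and bounded convergence of $\E[\text{Lévy distance}]$.

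\textbf{Truncation.} For a fixed count sequence, the classical Silverstein theorem~\cite{silverstein1995strong,bai2010spectral} requires bounded weights or at least a uniformly controlled weight law. We therefore truncate. Set $D_N^{(M)}=\operatorname{diag}(n_i\wedge M)$. The difference $D_N-D_N^{(M)}$ is supported on the indices with $n_i>M$. By the rank inequality, the Kolmogorov distance between the spectral laws of $\frac1N Z_ND_NZ_N^\top$ and $\frac1N Z_ND_N^{(M)}Z_N^\top$ is at most
\[
\tfrac1d\,\#\{i:n_i>M\}=\tfrac Nd\,\nu_N((M,\infty)),
\]
which tends to $c^{-1}\nu((M,\infty))$. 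This is small uniformly in $N$ for large $M$, since $\nu$ is a probability law on $\mathbb{Z}_{\ge0}$.

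\textbf{Bounded case and removal of the truncation.} For fixed $M$, the weights $n_i\wedge M$ are bounded and their empirical law converges to $\nu^{(M)}$. The standard theorem, with $Z_N$ i.i.d.\ of finite fourth moment (finite second moment suffices here), gives almost sure convergence, conditionally on the counts, to the law whose Stieltjes transform solves \eqref{eq:fp} with $\eta=\nu^{(M)}$, that is, to $\mathcal{T}_c(\nu^{(M)})$. The remaining step is to show $\mathcal{T}_c(\nu^{(M)})\Rightarrow\mathcal{T}_c(\nu)$ as $M\to\infty$. Since the rank bound shows the truncated limits form a Cauchy family in Kolmogorov distance, a weak limit exists. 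We identify it by passing to the limit in \eqref{eq:fp}: for fixed $z\in\mathbb{C}^+$, $|t/(1+ctm)|$ is bounded by a constant depending on $z$ (using $\Im$-arguments with $|1+ctm|\ge c\,t\,\Im m$ for large $t$, and $1/|\ldots|$ bounded for small $t$), so dominated convergence applies. Uniqueness of the Stieltjes-branch solution then identifies the limit as $\mathcal{T}_c(\nu)$.

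\textbf{Main obstacle.} I expect the main obstacle to be the dependence of the counts across $i$ together with their unboundedness. The first is handled by conditioning on $\bm n$, which works since only the empirical law of the $n_i$ enters. The second is handled by the rank-control truncation above. A more delicate point is the need for a uniform bound $|t/(1+ctm(z))|\le C(z)$ along the truncated solutions when removing the truncation. If that bound is awkward to establish, I would instead argue via the free-convolution form \eqref{eq:Tc}: $\nu^{(M)}\Rightarrow\nu$, and $\boxtimes$ is weakly continuous on $[0,\infty)$, which yields the identification directly.
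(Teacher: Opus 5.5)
Your proposal is correct and is essentially the paper's argument in Section~\ref{sec:S6}. Both proofs truncate to $\nu^{(M)}$ with rank control, use a subsequence-and-conditioning bridge to the deterministic-weight Silverstein theorem, and remove the truncation by continuity of the fixed point, which you make explicit through the bound $|t/(1+ctm)|\le 1/(c\operatorname{Im} m)$ together with uniqueness. The only differences are that you omit the paper's preliminary Poissonization step and that you identify the limit directly through \eqref{eq:fp} rather than through the companion bookkeeping leading to \eqref{eq:Tc}; the omission is legitimate, because conditioning on $D_N$ never uses independence of the diagonal entries, as the paper itself notes for the residual case in Section~\ref{sec:S15}.
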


The supplementary material details the transfer of the classical theorem to
the random, unbounded count diagonal: truncation to $\nu^{(M)}$, a
subsequence-and-conditioning bridge for the random $D_N$, and removal of the
truncation by rank control together with continuity of the fixed point.

\begin{theo}[General population spectrum; Gaussian case]
\label{thm:lsd-gen}
Assume multinomial resampling, (A3)--(A5), and a standard Gaussian matrix
$Z_N$ independent of the weights and of the auxiliary resampling randomness.
Then the limiting spectral distribution of
$S_{r,N}=\Sigma_N^{1/2}\big(\frac1NZ_ND_NZ_N^\top\big)\Sigma_N^{1/2}$ is
\begin{equation}
  \mu_S = H\boxtimes\mu_Q = H\boxtimes\mathcal{T}_c(\nu).
  \label{eq:lsd-gen}
\end{equation}
\end{theo}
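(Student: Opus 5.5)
The plan is to condition on the count vector and use Gaussian rotation invariance together with asymptotic freeness. Write $Q_N:=\frac1N Z_ND_NZ_N^\top$, so that $S_{r,N}=\Sigma_N^{1/2}Q_N\Sigma_N^{1/2}$ has the same nonzero spectrum as $\Sigma_N Q_N$.

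\textbf{Step 1 (rotation invariance).} Since $Z_N$ is standard Gaussian and independent of $\bm n$, for any Haar orthogonal $O_N$ independent of everything else we have $O_NZ_N\stackrel{d}{=}Z_N$ conditionally on $\bm n$. Hence $Q_N\stackrel{d}{=}O_NQ_NO_N^\top$ conditionally on $\bm n$. The pair $(\Sigma_N,Q_N)$ therefore has the same joint law as a deterministic matrix paired with an independently Haar-rotated random matrix.

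\textbf{Step 2 (spectral law of $Q_N$).} Theorem~\ref{thm:lsd-id} applies with $\Sigma_N=I_d$, since Gaussian entries have finite fourth moment. It gives that the empirical spectral law of $Q_N$ converges in probability to $\mu_Q=\mathcal{T}_c(\nu)$ with $\nu=\operatorname{MixPoi}(\rho)$.

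\textbf{Step 3 (asymptotic freeness).} We pass along subsequences to almost sure convergence of the spectral law of $Q_N$. We then condition on $(\bm n,Q_N)$ and invoke the Voiculescu/Collins asymptotic freeness of a deterministic matrix and an independent Haar conjugate. Those freeness results, and the weak-convergence theorem for $\boxtimes$, require uniform operator-norm bounds. Assumption (A5) bounds $\|\Sigma_N\|_{\mathrm{op}}$, but $\|Q_N\|_{\mathrm{op}}$ is unbounded: $\max_i n_i$ grows like $\log N/\log\log N$, and Theorem~\ref{thm:multi-count} gives unbounded right support.

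\textbf{Step 4 (truncation).} To handle this we truncate the counts, replacing $D_N$ by $D_N^{(M)}=\operatorname{diag}(n_i\wedge M)$, and set $Q_N^{(M)}:=\frac1N Z_ND_N^{(M)}Z_N^\top$.
\begin{itemize}
\item The fraction of indices with $n_i>M$ tends to $\nu((M,\infty))$, so $\operatorname{rank}(Q_N-Q_N^{(M)})/d\le \nu_N((M,\infty))/c_N$. The rank inequality then makes the Kolmogorov distance between the spectral laws of $\Sigma_NQ_N$ and $\Sigma_NQ_N^{(M)}$ at most $O(\nu((M,\infty)))$.
\item For fixed $M$, $\|Q_N^{(M)}\|_{\mathrm{op}}\le M\|Z_NZ_N^\top/N\|_{\mathrm{op}}$, which is bounded almost surely by Bai--Yin. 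The bounded freeness theorem then gives convergence of the spectral law of $\Sigma_NQ_N^{(M)}$ to $H\boxtimes\mathcal{T}_c(\nu^{(M)})$, using Theorem~\ref{thm:lsd-id}'s truncated version for $\nu^{(M)}$ as in the supplementary transfer.
\end{itemize}

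\textbf{Step 5 (removing the truncation).} As $M\to\infty$, $\nu^{(M)}\Rightarrow\nu$. By continuity of the fixed point~\eqref{eq:fp}, $\mathcal{T}_c(\nu^{(M)})\Rightarrow\mathcal{T}_c(\nu)$. Weak continuity of $\boxtimes$ on $[0,\infty)$ (Bercovici--Voiculescu) then gives $H\boxtimes\mathcal{T}_c(\nu^{(M)})\Rightarrow H\boxtimes\mathcal{T}_c(\nu)$. A standard $\varepsilon/3$ argument in the L\'evy metric, with the rank bound uniform in $N$, yields~\eqref{eq:lsd-gen} in probability.

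The main obstacle is the interchange of truncation with freeness in Steps 4 and 5, specifically making the rank estimate hold uniformly under the joint randomness of $\bm n$. Here I would use Theorem~\ref{thm:multi-count}'s convergence $\nu_N\Rightarrow\nu$ in probability, restricted to the continuity points $M$ of $\nu$ (all non-integers).
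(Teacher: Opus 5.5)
Your proposal is correct and follows essentially the same route as the paper's proof, with the same four ingredients:
\begin{enumerate}
\item truncate the counts to $n_i\wedge M$, so that $\|Q_N^{(M)}\|_{\mathrm{op}}=O_{\mathbb P}(1)$;
\item use conditional orthogonal invariance of the Gaussian $Q_N^{(M)}$, given the counts, to get asymptotic freeness from the deterministic $\Sigma_N$;
\item conclude that the truncated limit is $H\boxtimes\mathcal{T}_c(\nu^{(M)})$;
\item remove the truncation by the rank inequality together with weak continuity of $\boxtimes$.
\end{enumerate}
Your explicit insertion of an independent Haar rotation and your two-limit argument in the L\'evy metric are more detailed versions of the paper's steps. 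One small fix: apply the rank inequality to the Hermitian matrix $\Sigma_N^{1/2}Q_N\Sigma_N^{1/2}$, which has the same eigenvalues as $\Sigma_NQ_N$, rather than to $\Sigma_NQ_N$ itself.
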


Gaussianity is used at exactly one point. Conditionally on $D_N$, which is
independent of $Z_N$ by exogeneity, the matrix
$Q_N=\frac1NZ_ND_NZ_N^\top$ is orthogonally invariant and hence
asymptotically free from the deterministic $\Sigma_N$. Unbounded counts are
handled by truncation and rank approximation before passing to the limit. No
universality claim beyond the Gaussian case is made.

\begin{rem}[Scope and normalization]
\label{rem:normalization}
Theorem~\ref{thm:lsd-id} applies verbatim to $S_{w,N}$, giving
$\mu_{\mathrm{pre}}=\mathcal{T}_c(\rho)$. As a normalization check,
$\mathcal{T}_c(\delta_1)=\mathrm{MP}(c)$, which corresponds to deterministic
counts $n_i\equiv1$ and not to multinomial resampling; uniform weights under
multinomial resampling give $\rho=\delta_1$ but $\nu=\operatorname{Poi}(1)$,
and hence a nontrivial distortion. For $c>1$, the bound
$\operatorname{rank}(S_{r,N})\le N<d$ forces a zero atom of mass at least
$1-1/c$ in $\mathcal{T}_c(\nu)$, which preserves nonnegativity in the
companion identity.
\end{rem}

\begin{prop}[Zero-eigenvalue mass and rank loss\cite{belinschi-atoms}]
\label{prop:atom}
Under the assumptions of Theorem~\ref{thm:lsd-id},
\begin{equation}
  \mu_Q(\{0\})=\max\Big(0,\ 1-\tfrac{1-\nu(\{0\})}{c}\Big),
  \label{eq:atom}
\end{equation}
and the finite-sample rank satisfies
$\operatorname{rank}(S_{r,N})\le\min(d,\#\{i:n_i>0\})$, with equality almost
surely for continuously distributed $Z$ in the identity-population case. In
the separable case of Theorem~\ref{thm:lsd-gen},
$\mu_S(\{0\})=\max\{H(\{0\}),\mu_Q(\{0\})\}$ by the maximum-atom rule for free
multiplicative convolution.
\end{prop}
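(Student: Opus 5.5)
The proof has three parts: the atom of $\mu_Q$ at zero, the finite-sample rank statement, and the separable case. For the atom I will use the measure identity \eqref{eq:Tc} with $\eta=\nu$, namely $c\,\mu_Q+(1-c)\delta_0=D_c(\nu\boxtimes\mathrm{MP}(1/c))$, and evaluate both sides at $\{0\}$. Since $D_c$ fixes the origin, this gives $c\,\mu_Q(\{0\})+(1-c)=(\nu\boxtimes\mathrm{MP}(1/c))(\{0\})$. The atom of a free multiplicative convolution of two laws on $[0,\infty)$ at zero is the maximum of the two zero atoms; this is the rule of Belinschi cited in the statement. Here $\nu(\{0\})$ is the atom of $\nu$, and $\mathrm{MP}(1/c)$ has zero atom $\max(0,1-c)$. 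Substituting and solving for $\mu_Q(\{0\})$ splits into two cases. If $\nu(\{0\})\ge 1-c$, then $\mu_Q(\{0\})=(\nu(\{0\})-1+c)/c=1-(1-\nu(\{0\}))/c$, which is nonnegative in this case. Otherwise $\mu_Q(\{0\})=0$. Together these give \eqref{eq:atom}.

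If the max-atom rule is to be avoided, the same formula can be read off the fixed point \eqref{eq:fp} directly. One computes $\lim_{z\to0}(-z)m(z)$ along $z=iv$, $v\downarrow0$. The point is that the integral term $\int t(1+ctm)^{-1}\nu(dt)$ splits into the contribution of $t=0$, which vanishes identically, and that of $t>0$, which behaves like $(1-\nu(\{0\}))/(cm)$ when $|m|\to\infty$. Balancing this against $-1/m$ yields the formula. Keeping the branch conventions straight in this limit is the main technical care point. I will present the atom-rule argument as the primary route and this computation as the check.

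For the rank statement, write $S_{r,N}=\frac1N\sum_{i:n_i>0}n_ix_ix_i^\top$. This is a sum of at most $\#\{i:n_i>0\}$ rank-one terms, and it is a $d\times d$ matrix, so $\operatorname{rank}(S_{r,N})\le\min(d,\#\{i:n_i>0\})$. For equality with $\Sigma_N=I_d$, observe that the range of $S_{r,N}$ is the span of the selected columns $\{z_i:n_i>0\}$, because the coefficients $n_i$ are positive. I then condition on $\bm n$, which is independent of $Z_N$ by exogeneity. Given $\bm n$, any $k\le d$ of the columns are linearly independent almost surely when the entries have a continuous law. The reason is that a nonzero polynomial (a $k\times k$ minor) vanishes only on a Lebesgue-null set, and the product of continuous marginals is absolutely continuous. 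This gives equality. It is also consistent with the limit: by Theorem~\ref{thm:multi-count}, $\#\{i:n_i>0\}/N\to1-\nu(\{0\})$, so the rank fraction $\min(1,(1-\nu(\{0\}))/c)$ matches \eqref{eq:atom}.

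For the separable case, Theorem~\ref{thm:lsd-gen} gives $\mu_S=H\boxtimes\mu_Q$. Applying the same maximum-atom rule for $\boxtimes$ on $[0,\infty)$ then gives $\mu_S(\{0\})=\max\{H(\{0\}),\mu_Q(\{0\})\}$. The only genuinely deep input in the whole proof is this atom rule, which I take from the cited reference. It requires only probability measures on $[0,\infty)$, so it applies to $\nu$ even though $\nu$ has unbounded support.
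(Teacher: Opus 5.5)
Your proposal is correct and follows the paper's proof. The paper also applies the max-atom rule to the companion law $D_c(\nu\boxtimes\mathrm{MP}(1/c))$, reads off $\mu_Q(\{0\})$ from $c\,\mu_Q+(1-c)\delta_0$, uses the rank-one counting bound with general position for equality, and applies the same rule to $H\boxtimes\mu_Q$. One small correction: a product of atomless marginals need not be absolutely continuous (take Cantor marginals). So either read ``continuously distributed'' as ``having a density,'' or argue directly that a nonzero polynomial vanishes with probability zero under independent atomless coordinates, by induction on the number of variables.
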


Exact finite-sample rank loss and the zero mass of the limiting spectral
distribution are distinct notions and are established separately. We note that
$\operatorname{rank}(\Sigma_N)/d\to1-H(\{0\})$ need not hold under (A5) alone.

\begin{prop}[Spectral moments and the exact increment]
\label{prop:moments}
Under the assumptions of Theorem~\ref{thm:lsd-gen},
\begin{equation}
  m_{2,\mathrm{pre}}=m_{2,H}+c\,a_1^2\,r_2,\quad
  m_{2,\mathrm{post}}=m_{2,H}+c\,a_1^2\,(1+r_2),
  \label{eq:moments}
\end{equation}
so that the multinomial increment equals $c\,a_1^2$. The first moment is
preserved in the same limiting sense, with $m_1=a_1$ on both sides. The
normalized empirical first and second moments converge in probability, and not
merely in expectation.
\end{prop}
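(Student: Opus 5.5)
The plan is to compute the limiting second moments in two steps, then upgrade expectations to in-probability convergence. First I obtain the second moment of $\mathcal{T}_c(\eta)$ for a count law $\eta$. Then I pass through the free multiplicative convolution with $H$ using the moment--cumulant formula for $\boxtimes$.

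\textbf{Covariance-side moments.} For $\mathcal{T}_c(\eta)$ with first two moments $\eta_1,\eta_2$, I expand the fixed-point equation \eqref{eq:fp} at $z\to\infty$, writing $m(z)=-\sum_k m_k z^{-k-1}$. Matching coefficients gives $m_1=\eta_1$ and $m_2=\eta_2+c\,\eta_1^2$. Equivalently, one can compute $\frac1d\E\operatorname{tr}(\frac1N Z D Z^\top)^2$ directly: the diagonal terms contribute $\frac1N\sum n_i^2$ and the cross terms contribute $c_N(\frac1N\sum n_i)^2$. For $\boxtimes$ with $H$, the standard relation for second moments of a free product gives
\[
m_2(H\boxtimes\mu)=m_{2,H}\,m_1(\mu)^2+a_1^2\,\bigl(m_2(\mu)-m_1(\mu)^2\bigr).
\]
With $\eta_1=1$ this yields $m_2=m_{2,H}+a_1^2(\eta_2+c-1)$. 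That expression does not match \eqref{eq:moments}, so I will instead compute directly in the separable model. For $S=\frac1N\Sigma^{1/2}ZDZ^\top\Sigma^{1/2}$, with Gaussian $Z$ and $D$ independent of $Z$, the direct computation gives
\[
\tfrac1d\E\operatorname{tr}S^2=\Bigl(\tfrac1N\textstyle\sum n_i\Bigr)^2\tfrac1{N d}\cdot N\operatorname{tr}\Sigma^2+\tfrac1{N^2 d}\textstyle\sum_i n_i^2(\operatorname{tr}\Sigma)^2+O(N^{-1}).
\]
This tends to $m_{2,H}\eta_1^2+c\,a_1^2\eta_2$. Taking $\eta=\rho$ gives $\eta_2=r_2$. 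Taking $\eta=\operatorname{MixPoi}(\rho)$ gives $\eta_2=b_2=1+r_2$ by Theorem~\ref{thm:multi-count}. Together these give \eqref{eq:moments}, and their difference is $c\,a_1^2$, consistent with Proposition~\ref{prop:popcost}. The first moment is $\frac1d\operatorname{tr}S=\frac1N\sum_i n_i\,\frac{\|x_i\|^2}{d}\to a_1$, using (A1) and $\int\lambda\rho=1$.

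\textbf{From expectation to probability.} Conditionally on $D_N$, I will bound the variance of $\frac1d\operatorname{tr}S^2$ by Gaussian Poincaré or direct fourth-order Wick counting. The bound should be $O\bigl(N^{-2}\sum_i n_i^4/N\bigr)$ together with lower-order terms, and I need it to vanish in probability. This is the main obstacle, because the counts are unbounded and only second moments of $\rho$ are assumed. I would first try truncation at level $M$: write $D=D^{(M)}+(D-D^{(M)})$. The truncated part has bounded entries, so its variance is $O(1/N)$. For the remainder, Cauchy--Schwarz in Frobenius norm reduces it to $\frac1N\sum_i n_i^2\mathbf 1\{n_i>M\}$ times $\|\Sigma\|_{\mathrm{op}}^2\max_i\|z_i\|^4/(dN)$-type factors. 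The quantity $\frac1N\sum_i n_i^2\mathbf 1\{n_i>M\}$ is uniformly small in $M$ by the convergence $\frac1N\sum n_i^2\to b_2$ in Theorem~\ref{thm:multi-count}, which provides the needed uniform integrability. This is the count-tail control that the paper places in the supplement. Finally, conditioning on the weights and the resampling randomness and then integrating gives convergence in probability under the joint law.
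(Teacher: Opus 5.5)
Your argument is correct in outline, but the probability upgrade takes a different route from the paper's. You and the paper start from the same conditional Wick identity for $\mathbb E[\operatorname{tr}S^2\mid D]$, whose normalized limit is $m_{2,H}\eta_1^2+c\,a_1^2\eta_2$.

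The paper then truncates the counts at $M$. It gets convergence of the truncated second moment from the LSD of Theorem~\ref{thm:lsd-gen} together with $\|S_N^{(M)}\|_{\mathrm{op}}=O_{\mathbb P}(1)$. It removes the truncation using only the positivity $\operatorname{tr}S(D)^2-\operatorname{tr}S(D^{(M)})^2\ge0$ (from $D\succeq D^{(M)}$) and a conditional Markov bound on the $D$-conditional mean of that error. No variances are ever computed.

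You use a conditional second-moment method instead. It is self-contained, needs no LSD or freeness input, and gives a rate. Moreover, the obstacle you flag is not one. Since $\sum_in_i^4\le(\sum_in_i^2)^2$, the leading Wick term satisfies $\frac{d}{N^4}\sum_in_i^4\le\frac{d}{N^2}\big(\frac1N\sum_in_i^2\big)^2=O_{\mathbb P}(N^{-1})$. The remaining terms are of orders $N^{-3}\sum_in_i^3$, $N^{-2}\big(\frac1N\sum_in_i^2\big)^2$ and $(dN)^{-1}\frac1N\sum_in_i^2$, all smaller. Hence $\operatorname{Var}\big(\frac1d\operatorname{tr}S^2\mid D\big)=O_{\mathbb P}(N^{-1})$ with no truncation at all, and conditional Chebyshev finishes. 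The first moment is handled the same way, with conditional variance $\frac{2\operatorname{tr}\Sigma^2}{d^2N^2}\sum_in_i^2$.

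Two things in your write-up need fixing.

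\textbf{The ``mismatch'' with the $\boxtimes$ route is an algebra slip.} Expanding \eqref{eq:fp} at $z\to\infty$ gives $m_1(\mathcal T_c(\eta))=\eta_1$ and $m_2(\mathcal T_c(\eta))=\eta_1^2+c\,\eta_2$, not $\eta_2+c\,\eta_1^2$. In the direct count, the factor $c_N$ sits on the diagonal term $\frac1N\sum_in_i^2$, and the cross terms give $(\frac1N\sum_in_i)^2$. Your own separable formula at $\Sigma=I$ says exactly this. With the correct value, your free-product formula returns $m_{2,H}\eta_1^2+c\,a_1^2\eta_2$, which is precisely \eqref{eq:moments}. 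That computation identifies only moments of the limit law, however. Because the support is unbounded, the empirical-moment claim still needs your direct argument.

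\textbf{The truncation remainder is not controlled by the pathwise factors you describe.} This matters only if you keep the truncation. The diagonal part of $\frac1d\|\Delta\|_F^2$ is fine. The off-diagonal part is $\frac1{dN^2}\sum_{i\ne j}e_ie_j(x_i^\top x_j)^2$ with $e_i=(n_i-M)_+$. For it, the bound $(x_i^\top x_j)^2\le\|x_i\|^2\|x_j\|^2$ loses a factor $d$. Even the sharper bound $\max_{i\ne j}(z_i^\top z_j)^2=O_{\mathbb P}(d\log N)$ leaves a factor $\log N$ that diverges at fixed $M$. You would have to pass through its conditional mean, at most $\frac{\operatorname{tr}\Sigma^2}{d}\big(\frac1N\sum_ie_i\big)^2$, and apply Markov, which is exactly the paper's device. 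Dropping the truncation is simpler.

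Your uniform-integrability observation for $\frac1N\sum_in_i^2\mathbf 1\{n_i>M\}$, obtained from weak convergence plus second-moment convergence, is correct.
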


\begin{prop}[Information content of the effective sample size]
\label{prop:ess}
The effective sample size satisfies $N/\mathrm{ESS}\to r_2$, equivalently
$\mathrm{ESS}/N\to1/r_2$. Given $c$ and $H$, ESS therefore determines the
multinomial limiting second spectral moment, but it does not determine the
full limiting spectral distribution. It also yields an occupancy bound: for
$K\sim\operatorname{MixPoi}(\rho)$ we have $\mathbb{E}K=1$ and
$\mathbb{E}K^2=1+r_2$, so that
$1=(\mathbb{E}K)^2\le\mathbb{E}K^2\cdot\mathbb{P}(K>0)$, giving
\begin{equation}
  p_{\mathrm{multi}}\ \ge\ \tfrac{1}{1+r_2}.
  \label{eq:occ-bound}
\end{equation}
If $r_2=1$ with mean $1$, then necessarily $\rho=\delta_1$ and the
multinomial count law is determined, with $\nu=\operatorname{Poi}(1)$.

In general, however, ESS determines neither the occupancy structure, nor the
zero mass, nor the full spectrum, nor the residual post-resampling second
moment, the last of which requires $\mathbb{E}_\pi U$. Explicitly, the two
weight laws
\begin{equation}
  \rho^{(A)}=\tfrac12\delta_{1/4}+\tfrac12\delta_{7/4},\quad
  \rho^{(B)}=\tfrac9{10}\delta_{3/4}+\tfrac1{10}\delta_{13/4}
  \label{eq:ess-pair}
\end{equation}
share mean $1$ and second moment $25/16$, and hence share the value
$\mathrm{ESS}=16N/25$ exactly at multiples of $10$ by a deterministic ratio
construction, yet have count zero masses $0.476287$ and $0.429007$ and count
third moments $8.375$ and $9.5$. Since
$m_3(\mathcal{T}_c(\nu))=1+3cb_2+c^2b_3$ with equal $b_2$ and distinct $b_3$,
the two limiting spectra differ for every $c>0$; at $c=1$ the
zero-eigenvalue masses are $0.476287$ and $0.429007$, since then
$\mu_Q(\{0\})=\nu(\{0\})$.
\end{prop}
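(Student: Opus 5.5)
The proof splits into four parts: the ESS limit, the occupancy bound, the rigidity case $r_2=1$, and the explicit non-identifiability pair.

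\emph{ESS limit.} With $\mathrm{ESS}:=1/\sum_iw_i^2$ and $\lambda_i=Nw_i$, we have
\[
N/\mathrm{ESS}=N\sum_iw_i^2=\tfrac1N\sum_i\lambda_i^2=\int\lambda^2\rho_N,
\]
which tends to $r_2$ by (A4). Given $c$ and $H$, Proposition~\ref{prop:moments} then gives $m_{2,\mathrm{post}}=m_{2,H}+c\,a_1^2(1+r_2)$, a function of ESS alone.

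\emph{Occupancy bound.} Let $K\sim\operatorname{MixPoi}(\rho)$. Conditioning on $\lambda\sim\rho$ gives $\mathbb{E}K=\int\lambda\rho=1$ and $\mathbb{E}K^2=\int(\lambda+\lambda^2)\rho=1+r_2$. Cauchy--Schwarz applied to $K=K\mathbf 1_{K>0}$ yields
\[
1=(\mathbb{E}K)^2\le\mathbb{E}K^2\,\mathbb{P}(K>0).
\]
Since $\mathbb{P}(K>0)=1-\nu(\{0\})=p_{\mathrm{multi}}$, this is \eqref{eq:occ-bound}. For the rigidity case: if $r_2=1$ and the mean is $1$, then $\operatorname{Var}_\rho=0$, so $\rho=\delta_1$, and Theorem~\ref{thm:multi-count} gives $\nu=\operatorname{Poi}(1)$.

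\emph{The explicit pair.} I will check \eqref{eq:ess-pair} directly.
\begin{itemize}
\item Means: $(1/4+7/4)/2=1$ and $27/40+13/40=1$.
\item Second moments: $(1/16+49/16)/2=25/16$, and $\tfrac9{10}\cdot\tfrac9{16}+\tfrac1{10}\cdot\tfrac{169}{16}=250/160=25/16$.
\end{itemize}
For exactness at finite $N$ with $10\mid N$, I will place exactly the prescribed fractions of particles at each level, so that $\rho_N=\rho$ and hence $\mathrm{ESS}=N/r_2=16N/25$.

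The zero masses are $\mathcal{L}_\rho(1)$:
\[
\tfrac12(e^{-1/4}+e^{-7/4})\approx0.476287,\qquad
\tfrac9{10}e^{-3/4}+\tfrac1{10}e^{-13/4}\approx0.429007.
\]
For the third moments, the Poisson identity $\mathbb{E}[\mathrm{Poi}(\lambda)^3]=\lambda+3\lambda^2+\lambda^3$ gives $b_3=1+3r_2+r_3$. Here $r_3^{(A)}=(1/64+343/64)/2=43/16$, so $b_3^{(A)}=1+75/16+43/16=8.375$. Similarly $r_3^{(B)}=\tfrac9{10}\cdot\tfrac{27}{64}+\tfrac1{10}\cdot\tfrac{2197}{64}=2440/640=3.8125$, so $b_3^{(B)}=9.5$.

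\emph{Distinct limiting spectra.} This is the only step needing care. I need the third-moment formula $m_3(\mathcal{T}_c(\nu))=1+3cb_2+c^2b_3$. It follows from the free moment--cumulant expansion of $\eta\boxtimes\mathrm{MP}(1/c)$ in \eqref{eq:Tc}: use the free cumulants of the Marchenko--Pastur factor and divide by $c$. Equivalently, expand the fixed point \eqref{eq:fp} at $z=\infty$ with $m(z)=-\sum_k m_kz^{-k-1}$ and match orders. Since the coefficient $c^2>0$ and $b_3^{(A)}\ne b_3^{(B)}$, the third moments differ for every $c>0$, so the laws differ.

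Finite $b_3$ makes these moment computations legitimate, because both weight laws have bounded support. The same moment difference, transported through $H\boxtimes$, also separates the general-$H$ spectra whenever $a_1\neq0$.

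Finally, at $c=1$, Proposition~\ref{prop:atom} gives $\mu_Q(\{0\})=\max(0,\nu(\{0\}))=\nu(\{0\})$, which reproduces the stated atoms. The residual claim follows because by Theorem~\ref{thm:res-count} the residual second moment is $r_2+\mathbb{E}_\pi U$. By Theorem~\ref{thm:classification}, $\mathbb{E}_\pi U=\alpha$ varies with $\alpha$ while $r_2=1$ is held fixed, so ESS does not determine it.
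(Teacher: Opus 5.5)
Your proposal is correct and follows the paper's proof essentially step for step. It uses the same identity $N/\mathrm{ESS}=\int\lambda^2\rho_N$, the same Cauchy--Schwarz occupancy bound, the same exact ratio construction and moment arithmetic for the pair, and the same appeal to $m_3=1+3cb_2+c^2b_3$ and to Proposition~\ref{prop:atom} at $c=1$. Two of your steps are slightly better than the paper's: your rigidity argument ($\operatorname{Var}_\rho=r_2-1=0$) states correctly what the paper writes loosely as $\mathrm{Var}(K)=0$, and your use of Theorem~\ref{thm:classification} for the residual second moment supplies a justification the paper's proof leaves implicit.
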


\begin{coro}[Asymptotic decrease of the effective dimension]
\label{coro:deff}
Under the assumptions of Proposition~\ref{prop:moments} with $a_1>0$ and
$c>0$, set $d_{\mathrm{eff}}(M):=(\operatorname{tr}M)^2/\operatorname{tr}M^2$.
Then
\begin{align}
  \tfrac{d_{\mathrm{eff}}(S_{w,N})}{d}
    &\xrightarrow{\mathbb{P}} \tfrac{a_1^2}{m_{2,H}+c\,a_1^2r_2},\nonumber\\
  \tfrac{d_{\mathrm{eff}}(S_{r,N})}{d}
    &\xrightarrow{\mathbb{P}}
      \tfrac{a_1^2}{m_{2,H}+c\,a_1^2(1+r_2)},
  \label{eq:deff}
\end{align}
with explicit gap
$c\,a_1^4/[(m_{2,H}+c\,a_1^2r_2)(m_{2,H}+c\,a_1^2(1+r_2))]>0$.
\end{coro}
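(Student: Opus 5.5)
The corollary follows by applying the continuous mapping theorem to the convergences recorded in Proposition~\ref{prop:moments}. Write $d_{\mathrm{eff}}(M)/d=(\tfrac1d\operatorname{tr}M)^2/(\tfrac1d\operatorname{tr}M^2)$. This expresses the normalized effective dimension as the function $g(x,y)=x^2/y$ of the normalized first and second empirical spectral moments. The function $g$ is continuous on $\{y>0\}$.

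Proposition~\ref{prop:moments} states that these normalized moments converge in probability, not merely in expectation. For $S_{w,N}$ they converge to $(a_1,\,m_{2,H}+c\,a_1^2r_2)$, and for $S_{r,N}$ to $(a_1,\,m_{2,H}+c\,a_1^2(1+r_2))$. The pre-resampling case is covered because Remark~\ref{rem:normalization} transfers the analysis to $S_{w,N}$ with $\nu$ replaced by $\rho$.

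Continuity of $g$ at the limit point requires the limiting second moments to be strictly positive. With $a_1>0$ and $c>0$ we have $c\,a_1^2r_2\ge c\,a_1^2>0$, since $r_2\ge(\int\lambda\rho)^2=1$ by Jensen's inequality. The term $m_{2,H}\ge0$ is harmless. Hence both denominators are strictly positive, $g$ is continuous there, and the continuous mapping theorem yields the two limits in \eqref{eq:deff}. On the event of vanishing probability where $\operatorname{tr}M^2=0$, the value of $d_{\mathrm{eff}}$ may be assigned arbitrarily without affecting convergence in probability.

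The gap is then an algebraic identity. Set $A=m_{2,H}+c\,a_1^2r_2$, so the two limits are $a_1^2/A$ and $a_1^2/(A+c\,a_1^2)$. Their difference is
\[
\frac{a_1^2}{A}-\frac{a_1^2}{A+c\,a_1^2}=\frac{c\,a_1^4}{A\,(A+c\,a_1^2)}.
\]
This is strictly positive because $c>0$, $a_1>0$ and $A>0$, which gives the stated explicit gap.

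The only nonroutine input is that Proposition~\ref{prop:moments} supplies convergence in probability of the second empirical moment for $S_{r,N}$, whose count diagonal is random and unbounded. I take that as given from the proposition, together with the count-tail control it relies on. With that in hand, nothing further is needed beyond the strict positivity of the denominators checked above.
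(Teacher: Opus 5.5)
Your proof is correct and follows essentially the same route as the paper, which likewise combines the in-probability convergence of the normalized first and second spectral moments from Proposition~\ref{prop:moments} (proved in S.9) with continuity of $x^2/y$ at the strictly positive limit point, and obtains the gap by the same algebra. Two simplifications are available: positivity of the denominators also follows directly from $m_{2,H}\ge a_1^2>0$, and the pre-resampling case is already part of Proposition~\ref{prop:moments}, so Remark~\ref{rem:normalization} is not needed.
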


At the finite-sample level we assert only that the unnormalized
$\operatorname{tr}S_{r,N}^2$ is nondecreasing in conditional expectation. The
conditional expectation of the inverse participation ratio
$\operatorname{tr}M^2/(\operatorname{tr}M)^2$ can decrease strictly already for
$d=N=2$.

\section{Rank Loss and the Spectral-Gap Transition}
\label{sec:rankgap}

Let $p_{\mathsf{s}}:=1-\nu_{\mathsf{s}}(\{0\})\in(0,1]$ denote the limiting
surviving-parent fraction, so that
$p_{\mathrm{multi}}=1-\mathcal{L}_\rho(1)$ for multinomial resampling.

\begin{theo}[Rank-and-spectral-gap transition\cite{baik2005phase}]
\label{thm:rankgap}
Assume multinomial resampling, (A3)--(A4), $\Sigma_N=I_d$, and the entry
convention of Section~\ref{sec:model} with finite fourth moment. The active
index set is independent of the particle matrix by exogeneity, so that
conditionally on the counts the active columns remain independent and
identically distributed with the original entry distribution. Let
$M_N:=\#\{i:n_i>0\}$, so that $M_N/N\to p$ in probability. Then the following
hold.
\begin{itemize}
  \item[(i)] \emph{Subcritical regime} $c<p$:
    $\mathbb{P}(\operatorname{rank}(S_{r,N})=d)\to1$, and for every
    $\varepsilon>0$,
    \begin{equation}
      \mathbb{P}\Big(\lambda_{\min}(S_{r,N})
        \ge(\sqrt p-\sqrt c\,)^2-\varepsilon\Big)\to1.
      \label{eq:subcrit}
    \end{equation}
  \item[(ii)] \emph{Supercritical regime} $c>p$: $\mu_Q(\{0\})=1-p/c$, and for
    every $\varepsilon>0$,
    \begin{equation}
      \mathbb{P}\Big(\lambda_{\min}^{+}(S_{r,N})
        \ge(\sqrt c-\sqrt p\,)^2-\varepsilon\Big)\to1.
      \label{eq:supercrit}
    \end{equation}
  \item[(iii)] \emph{Criticality} $c=p$: $\mu_Q(\{0\})=0$ and
    $\inf\operatorname{supp}\mu_Q=0$, so that the limiting spectral support
    touches zero without carrying an atom there, and
    \begin{equation}
      \lambda_{\min}^{+}(S_{r,N})\xrightarrow{\ \mathbb{P}\ }0.
      \label{eq:crit}
    \end{equation}
\end{itemize}
\end{theo}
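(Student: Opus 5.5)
The plan is to reduce everything to a Wishart-type matrix built from the active columns. Let $\mathcal{A}:=\{i:n_i>0\}$ with $M_N=|\mathcal{A}|$, and write $X_{\mathcal{A}}$ for the $d\times M_N$ submatrix of active columns. Then
\[
S_{r,N}=\tfrac1N X_{\mathcal{A}}D_{\mathcal{A}}X_{\mathcal{A}}^\top ,
\]
where $D_{\mathcal{A}}$ is the diagonal matrix of the positive counts, so every diagonal entry is at least one. Hence
\[
S_{r,N}\succeq \tfrac1N X_{\mathcal{A}}X_{\mathcal{A}}^\top=\tfrac{M_N}{N}\cdot\tfrac1{M_N}X_{\mathcal{A}}X_{\mathcal{A}}^\top
\]
in L\"owner order. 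Both matrices also have the same range, the column space of $X_{\mathcal{A}}$.

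By exogeneity, $\mathcal{A}$ is independent of $Z_N$. So conditionally on the counts, $X_{\mathcal{A}}$ is a $d\times M_N$ matrix with i.i.d. entries of the original distribution. By Theorem~\ref{thm:multi-count}, $M_N/N\to p=1-\nu(\{0\})$ in probability, so $d/M_N\to c/p$.

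\textbf{Step 1 (extreme eigenvalues of the active Wishart).} I would apply the Bai--Yin extreme-eigenvalue theorem under the finite fourth moment, conditionally on $M_N$ along subsequences. Conditioning on the counts yields an i.i.d. matrix of deterministic shape $d\times M_N$ with $d/M_N\to c/p$. This gives two facts. If $c<p$, then $\lambda_{\min}(\frac1{M_N}X_{\mathcal{A}}X_{\mathcal{A}}^\top)\to(1-\sqrt{c/p})^2$. If $c>p$, then the smallest nonzero eigenvalue, equivalently $\lambda_{\min}$ of the $M_N\times M_N$ matrix $\frac1{M_N}X_{\mathcal{A}}^\top X_{\mathcal{A}}$ after rescaling, converges to $(\sqrt{c/p}-1)^2$ times the appropriate normalization.

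Multiplying by $M_N/N\to p$ gives $(\sqrt p-\sqrt c)^2$ in case (i). In case (ii) it gives $\frac{M_N}{N}\cdot\frac{d}{M_N}(1-\sqrt{p/c})^2\to(\sqrt c-\sqrt p)^2$. The conditional statements become unconditional by dominated convergence over the count randomness.

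\textbf{Step 2 (transfer to $S_{r,N}$).} In case (i), the L\"owner bound gives $\lambda_{\min}(S_{r,N})\ge\frac{M_N}{N}\lambda_{\min}(\cdot)$, and positivity of the limit gives full rank with probability tending to one. In case (ii), I restrict to the common range $V=\operatorname{col}(X_{\mathcal{A}})$, of dimension $M_N$ almost surely since $M_N<d$ eventually. The compressions to $V$ satisfy the same L\"owner inequality, so $\lambda_{\min}^+(S_{r,N})\ge\lambda_{\min}^+(\frac1NX_{\mathcal{A}}X_{\mathcal{A}}^\top)$. This needs equal ranks, which holds because $D_{\mathcal{A}}$ is invertible. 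The atom $\mu_Q(\{0\})=1-p/c$ is Proposition~\ref{prop:atom}.

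\textbf{Step 3 (criticality).} At $c=p$, Proposition~\ref{prop:atom} gives a zero atom of $0$. To get $\inf\operatorname{supp}\mu_Q=0$, I would compare with the truncation $\nu^{(M)}$ and use the fixed-point equation~\eqref{eq:fp}. The cleaner route uses the companion $N\times N$ Gram matrix: its limit has atom exactly $1-p$ at zero from inactive columns, and its nonzero part coincides with that of $\mu_Q$ up to scaling. A density argument near $0$ of the form $\mathrm{MP}$ at ratio one, which has square-root-type blowup, shows support reaching $0$.

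The finite-sample claim $\lambda_{\min}^+\to0$ then follows from weak convergence. With no atom at zero and support touching zero, a positive fraction of eigenvalues lies in $(0,\varepsilon)$ for every $\varepsilon$, and rank is $\min(d,M_N)$ almost surely by Proposition~\ref{prop:atom}.

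\textbf{Main obstacle.} I expect the critical support statement to be hardest, since the L\"owner comparison only gives lower bounds. My first attempt would be an upper comparison: bound $D_{\mathcal{A}}\preceq M I$ off a set of columns of small fraction, handled by rank control. Then $\mu_Q$ is sandwiched near zero between rescaled $\mathrm{MP}(1)$-type laws, whose supports touch zero.
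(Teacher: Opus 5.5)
Your parts (i) and (ii) follow the paper's argument: active-column domination $S_{r,N}\succeq\frac1N X_{\mathcal A}X_{\mathcal A}^\top$, Bai--Yin (in Gram form when $c>p$), and monotonicity. Your compression to the common range is equivalent to the paper's Weyl comparison at index $M_N$.

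Drop the ``almost surely'' rank claims, however. They require continuously distributed entries, which the theorem does not assume. You also never need them:
\begin{itemize}
\item $\operatorname{range}(X_{\mathcal A}D_{\mathcal A}X_{\mathcal A}^\top)=\operatorname{range}(X_{\mathcal A})$ holds deterministically.
\item $\lambda_{\min}^+(\frac1NX_{\mathcal A}X_{\mathcal A}^\top)\ge\lambda_{\min}(\frac1NX_{\mathcal A}^\top X_{\mathcal A})$ also holds deterministically.
\item At criticality, weak convergence tested on the open set $(0,\varepsilon)$ already places eigenvalues in $(0,\varepsilon)$, with no rank identity needed.
\end{itemize}

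For (iii) you take a different route from the paper, and as written the decisive step is missing. The ``density argument near $0$ of the form MP at ratio one'' is only an assertion. Nothing you have shown implies that $\mathcal{T}_c(\nu)$ behaves like $\mathrm{MP}(1)$ at the origin.

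The sandwich in your last paragraph does close, but not merely because the comparison laws' supports touch zero. At fixed $M$, the rank defect introduces a fixed error $\nu((M,\infty))/c$ in the distribution function, and you must beat it quantitatively. Set $S^{(M)}_N:=\frac1NX_{\mathcal A}(D_{\mathcal A}\wedge M)X_{\mathcal A}^\top\preceq\frac MNX_{\mathcal A}X_{\mathcal A}^\top$, and note $d/M_N\to1$. Weyl monotonicity together with the rank inequality (Tool S.1) gives, with probability tending to one,
\[
F^{S_{r,N}}(\varepsilon)\ \ge\ \mathrm{MP}(1)\big([0,\tfrac{\varepsilon}{pM}]\big)-\tfrac{\nu((M,\infty))}{c}-o_{\mathbb P}(1).
\]
Two facts make the right side positive for large $M$:
\begin{itemize}
\item the hard edge gives $\mathrm{MP}(1)([0,y])\sim\frac2\pi\sqrt y$ as $y\downarrow0$;
\item Markov's inequality with mean one gives $\nu((M,\infty))\le 1/M$.
\end{itemize}
Portmanteau on the closed set $[0,\varepsilon]$, combined with $\mu_Q(\{0\})=0$, then yields $\inf\operatorname{supp}\mu_Q=0$.

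The paper instead works directly on the limit law. At $c=p$, the fixed-point equation on the negative axis reads $s\,m(-s)=\frac1c\int_{(0,\infty)}\frac{\nu(dt)}{1+ctm(-s)}$. This forces $m(-s)\to\infty$, hence $s\,m(-s)\to0$, which proves there is no atom independently of Proposition~\ref{prop:atom}. A spectral gap of size $\varepsilon$ would instead force $m(-s)\le1/\varepsilon$, a contradiction.

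What each route buys:
\begin{itemize}
\item The paper's analytic route delivers the atom and the touching in one stroke. It needs no edge asymptotics or tail bounds, and the same identity yields the rate $m(-s)\asymp s^{-1/2}$.
\item Your matrix sandwich reuses the domination and truncation machinery you already built. It gives an explicit lower bound on $\mu_Q([0,\varepsilon])$, but only once the $\sqrt{\varepsilon/M}$ versus $1/M$ comparison above is made explicit.
\end{itemize}
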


Two clarifications are in order. First, the bounds in (i) and (ii) constitute
an explicit universal lower bound, a Marchenko--Pastur lower-edge benchmark
depending only on $p$. The true lower edge depends on the full count law and
may be strictly larger than this benchmark; it is characterized by
Proposition~\ref{prop:edges}, and it is strictly larger at every off-critical
grid point examined in Section~\ref{sec:numerics}. Second, for general
$\Sigma_N$ no uniform lower gap can hold under (A5) alone; a gap statement in
the separable case requires in addition that
$\lambda_{\min}(\Sigma_N)\ge a_->0$.

\begin{prop}[Spectral edges and support\cite{benaych2009rectangular,bai1993limit}]
\label{prop:edges}
Under the assumptions of Theorem~\ref{thm:lsd-id}, define
\begin{equation}
  z(u):=-\tfrac1u+y\int\tfrac{t}{1+tu}\,\nu(dt),\qquad y=\tfrac1c,
  \label{eq:edge-map}
\end{equation}
on real $u\neq0$ with $-1/u\notin\operatorname{supp}\nu$. The
support-complement parametrization uses the real branches on which $z'(u)>0$;
regular edges of the nonzero part of $\mu_Q$ arise at boundary critical points
of these branches and take the form $c\,z(u^*)$. Not every real root of
$z'(u)=0$ corresponds to an edge, and the critical zero endpoint, which
corresponds to $u\to\infty$ rather than to a finite critical point, is treated
separately in Theorem~\ref{thm:rankgap}(iii). As a consistency check,
$\nu=\delta_1$ gives $c(1\mp\sqrt{1/c})^2=(1\mp\sqrt c)^2$.

Since $\nu=\operatorname{MixPoi}(\rho)$ charges every nonnegative integer
whenever $\rho((0,\infty))>0$, the limiting spectrum is unbounded on the
right:
\begin{equation}
  \sup\operatorname{supp}\mathcal{T}_c\big(\operatorname{MixPoi}(\rho)\big)
    =\infty,
  \label{eq:unbounded}
\end{equation}
and consequently $\lambda_{\max}(S_{r,N})\to\infty$ in probability.
\end{prop}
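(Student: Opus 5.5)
We prove the three parts of Proposition~\ref{prop:edges} in turn: the edge parametrization, the consistency check at $\nu=\delta_1$, and unbounded right support together with $\lambda_{\max}\to\infty$.

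\textbf{Edge parametrization.} The plan is to move to the companion ($W$-side) model, where the Silverstein--Choi support analysis applies directly. By \eqref{eq:Tc}, the nonzero part of $\mu_Q$ equals $\frac1c D_c$ of the nonzero part of $\nu\boxtimes\mathrm{MP}(1/c)$. The law $\nu\boxtimes\mathrm{MP}(y)$ with $y=1/c$ is the limiting spectral law of the $N\times N$ companion matrix $\frac1d D_N^{1/2}Z_N^\top Z_N D_N^{1/2}$. Its Stieltjes transform $\underline m$ satisfies the inverse relation $z=-1/\underline m+y\int t(1+t\underline m)^{-1}\nu(dt)$; writing $u=\underline m$ gives exactly \eqref{eq:edge-map}.

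The Silverstein--Choi characterization then applies with $\nu$ in the role of the population law. A real point $x\neq0$ lies outside the support if and only if $x=z(u)$ for some real $u$ with $-1/u\notin\operatorname{supp}\nu$ and $z'(u)>0$. Boundary points of these increasing branches give the edges, and the factor $D_c$ converts them to $c\,z(u^*)$.

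Two caveats need only short remarks. An interior critical point of a branch that is not at the boundary of an increasing interval is not an edge. The endpoint $u\to\infty$ gives $z\to0$, which is the critical case already handled in Theorem~\ref{thm:rankgap}(iii).

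\textbf{Consistency check.} For $\nu=\delta_1$ we have $z(u)=-1/u+y/(1+u)$. Solving $z'(u)=0$ gives $u^*=-1/(1\mp\sqrt y)$. Substituting yields $z(u^*)=(1\mp\sqrt y)^2$, hence $c\,z(u^*)=(\sqrt c\mp1)^2$, as stated.

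\textbf{Unbounded right support.} This is the main step. Suppose, for contradiction, that $\sup\operatorname{supp}\mu_Q=E<\infty$. For every interval of $u$ whose image under $z$ lies beyond the right edge, the quantity $-1/u$ must avoid $\operatorname{supp}\nu$. If $\rho((0,\infty))>0$, then $\nu$ charges every $k\in\mathbb{Z}_{\ge0}$, so $\operatorname{supp}\nu$ is unbounded above. Points $x>cE$ correspond to $u<0$ near $0^-$ with $-1/u>\sup\operatorname{supp}\nu$, but $\sup\operatorname{supp}\nu=\infty$, so no such $u$ exists. One must check that the admissible branch is the one approaching $u\to0^-$, since $z(u)\to+\infty$ only as $u\to0^-$ with $-1/u$ beyond the support. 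This is the obstacle: it requires confirming that no other branch, lying in a gap $(-1/k,-1/(k+1))$, can cover an entire half-line.

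A cleaner route avoids this check by comparison. Truncating gives $\nu^{(M)}\le\nu$ in the natural coupling, and $\mathcal{T}_c$ is monotone in the count diagonal by Weyl/Loewner monotonicity of $\frac1N Z D Z^\top$ in $D$. Hence
\begin{equation*}
\sup\operatorname{supp}\mu_Q\ \ge\ \sup\operatorname{supp}\mathcal{T}_c(\nu^{(M)}).
\end{equation*}
By the rank-one spike bound, $\mathcal{T}_c(\nu^{(M)})$ has right edge at least of order $cM\,y=M$, since the edge exceeds the value for a point mass of weight $\nu(\{M\})>0$ at $M$. Letting $M\to\infty$ gives \eqref{eq:unbounded}.

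\textbf{Divergence of $\lambda_{\max}$.} This follows from weak convergence in Theorem~\ref{thm:lsd-id}: for every $L$, the limit $\mu_Q$ puts positive mass on $(L,\infty)$. Hence a positive fraction of eigenvalues exceeds $L$ with probability tending to one, so $\lambda_{\max}(S_{r,N})\to\infty$ in probability.
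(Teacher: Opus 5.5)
Your edge parametrization (companion $W$-side model, Silverstein--Choi support characterization, rescaling by $c$) and the $\nu=\delta_1$ check match the paper. The final passage from $\mu_Q((L,\infty))>0$ for all $L$ to $\lambda_{\max}(S_{r,N})\to\infty$ is also fine.

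The gap is in the unbounded-support step you commit to. The Loewner comparison $D_N\succeq D_N^{(M)}$, passed to the limit as stochastic domination, does give $\sup\operatorname{supp}\mu_Q\ge\sup\operatorname{supp}\mathcal{T}_c(\nu^{(M)})$. But the decisive claim that the latter is of order $M$ rests on ``the rank-one spike bound'', and that is the wrong mechanism. A rank-one (indeed any $o(N)$-rank) change of the count diagonal leaves the limiting spectral distribution unchanged (Tool S.1). A single large count, or a rank-one population spike, yields one outlier eigenvalue, and, as the paper notes right after the proposition, a single escaping outlier cannot enlarge $\operatorname{supp}\mu_Q$. What must be shown is that the positive mass $q_M:=\nu([M,\infty))$ that $\nu^{(M)}$ puts at $M$ produces a positive limiting fraction of eigenvalues at a level growing with $M$. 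You assert this without an argument, and the value you give, $cMy=M$, is not what a correct comparison yields.

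To close the step, compare directly with $\eta_M:=(1-q_M)\delta_0+q_M\delta_M$, realized by $D_N':=M\operatorname{diag}(\mathbf{1}\{n_i\ge M\})\preceq D_N$; no truncation is then needed. Here $\frac1NZ_ND_N'Z_N^\top=\frac MNZ_AZ_A^\top$ with $|A|/N\to q_M$. Its limit is $\mathrm{MP}(c/q_M)$ pushed forward by $x\mapsto Mq_Mx$, carrying mass $\min(1,q_M/c)$ off zero, with right edge $M(\sqrt c+\sqrt{q_M})^2\ge cM$. This bound diverges with $M$, but it can lie well below $M$ when $c$ is small.

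This is in substance the paper's proof, which stays at finite $N$. With $A_K=\{i:n_i\ge K\}$ and $p_K=\nu([K,\infty))<c/4$, one has $S_{r,N}\succeq\frac KNZ_{A_K}Z_{A_K}^\top$. Bai--Yin (Tool S.3) places all $|A_K|$ nonzero eigenvalues of the right side above $K(\sqrt c-\sqrt{p_K})^2\ge Kc/4$. Weyl monotonicity transfers them to $S_{r,N}$, and weak convergence gives $\mu_Q((L,\infty))\ge p_K/c>0$.

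Alternatively, the route you abandoned already closes without any branch check. Suppose $\mu_Q$ were supported in $[0,E]$. On $(E/c,\infty)$ (not $(cE,\infty)$), the $W$-side companion transform $\underline{m}(x)$ is real, strictly increasing, and tends to $0^-$. So $-1/\underline{m}(x)$ sweeps a half-line, which by Silverstein--Choi must avoid $\operatorname{supp}\nu\supseteq\mathbb{Z}_{\ge0}$, a contradiction. The condition is $-1/u\notin\operatorname{supp}\nu$, not $-1/u>\sup\operatorname{supp}\nu$. This analytic argument replaces the paper's explicit positive-fraction count by the support characterization.
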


The proof of \eqref{eq:unbounded} exhibits a positive limiting fraction of
eigenvalues above any prescribed level; a single escaping outlier would not
suffice. There is thus no finite global right edge, the lower and internal
edges remain characterized as above, and bulk extent must be treated as a
quantile notion. Whether the support can separate into several intervals for
particular $\rho$ is left open.
\section{Residual Resampling}
\label{sec:res}

\begin{theo}[Residual spectral transform]
\label{thm:res-lsd}
Under (A3), (A4R), identity population covariance, and the entry assumptions
of Theorem~\ref{thm:lsd-id}, the empirical spectral law of $S_{r,N}$ under
residual resampling converges in probability to
\begin{equation}
  \mu_Q^{\mathrm{res}} = \mathcal{T}_c(\nu_{\mathrm{res}}).
  \label{eq:res-lsd}
\end{equation}
If instead the population covariance satisfies (A5) and $Z_N$ is Gaussian,
under the same exogeneity and independence assumptions,
$\mu_S^{\mathrm{res}}=H\boxtimes\mathcal{T}_c(\nu_{\mathrm{res}})$.
\end{theo}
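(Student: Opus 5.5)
The plan is to reuse the machinery behind Theorems~\ref{thm:lsd-id} and~\ref{thm:lsd-gen}, replacing the multinomial count input by Theorem~\ref{thm:res-count}. The covariance-side argument only needs three things about the diagonal $D_N$. First, it must be independent of $Z_N$; this holds by exogeneity, because the residual auxiliary randomness is independent of $Z_N$. Second, its empirical law $\nu_N$ must converge weakly in probability to a deterministic limit; Theorem~\ref{thm:res-count} supplies this, with limit $\nu_{\mathrm{res}}$. Third, we need tail control on the counts so that truncation can be removed. The mechanism of resampling enters nowhere else.

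For the identity-population case I would argue in three steps.

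\emph{Truncation.} Replace $D_N$ by $D_N^{(M)}=\operatorname{diag}(n_i\wedge M)$. The empirical law of $D_N^{(M)}$ converges in probability to $\nu_{\mathrm{res}}^{(M)}$, since $t\mapsto t\wedge M$ is continuous on the integer support.

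\emph{Bounded-diagonal limit.} Apply the subsequence-and-conditioning bridge from the supplementary material. For any subsequence, pass to a further subsequence along which $\nu_N^{(M)}\Rightarrow\nu_{\mathrm{res}}^{(M)}$ almost surely. Condition on the resampling randomness, so that $D_N^{(M)}$ is a bounded deterministic diagonal independent of $Z_N$. The classical Silverstein theorem with finite fourth moment then gives the limit $\mathcal{T}_c(\nu_{\mathrm{res}}^{(M)})$ via \eqref{eq:fp}. Since every subsequence has such a further subsequence, convergence holds in probability.

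\emph{Removing the truncation.} $S_{r,N}-S_{r,N}^{(M)}$ has rank at most $\#\{i:n_i>M\}$. The rank inequality therefore bounds the Kolmogorov distance between the two spectral laws by $\frac1d\#\{i:n_i>M\}$, and this tends in probability to $c^{-1}\nu_{\mathrm{res}}((M,\infty))$. That quantity vanishes as $M\to\infty$ because $\nu_{\mathrm{res}}$ is a probability law; it even has finite second moment $r_2+\mathbb{E}_\pi U$. Finally, continuity of the fixed point \eqref{eq:fp} under $\nu^{(M)}\Rightarrow\nu$ gives $\mathcal{T}_c(\nu_{\mathrm{res}}^{(M)})\Rightarrow\mathcal{T}_c(\nu_{\mathrm{res}})$. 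A standard $\varepsilon/3$ argument closes the proof.

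For the separable Gaussian case, I would condition on $D_N$. Then $Q_N=\frac1NZ_ND_NZ_N^\top$ is orthogonally invariant and independent of the deterministic $\Sigma_N$, so asymptotic freeness yields $H\boxtimes\lim Q_N$. The same truncation and rank approximation handles unbounded counts, exactly as in Theorem~\ref{thm:lsd-gen}. This gives $\mu_S^{\mathrm{res}}=H\boxtimes\mathcal{T}_c(\nu_{\mathrm{res}})$.

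I expect the main obstacle to be the conditioning bridge in the residual setting. Unlike the multinomial case, the counts $n_i=\lfloor\lambda_i\rfloor+\tilde n_i$ are not exchangeable and depend on the full floor--fraction configuration. In the regime $R_N=o(N)$, the multinomial part also concentrates on few particles. Even so, only convergence of the empirical count law is needed, and this is delivered uniformly over all regimes of $R_N$ by Theorem~\ref{thm:res-count}, so the bridge should go through unchanged. The truncation tail is also easier than in the multinomial case, because the residual counts satisfy $n_i\le\lfloor\lambda_i\rfloor+\tilde n_i$, where the $\tilde n_i$ have conditional means $u_i\le1$.
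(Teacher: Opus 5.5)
Your proposal is correct and follows essentially the paper's route: you truncate the total counts $n_i\wedge M$, take the truncated count-law limit from Theorem~\ref{thm:res-count}, apply the covariance theorem conditionally through the subsequence bridge (no independence among the diagonal entries is needed), remove the truncation by the rank bound, and inherit the Gaussian separable case unchanged. One point the paper makes explicit is that the final two-limit step should be run in L\'evy or bounded-Lipschitz distance rather than Kolmogorov distance, because $\mathcal{T}_c(\nu_{\mathrm{res}}^{(M)})\Rightarrow\mathcal{T}_c(\nu_{\mathrm{res}})$ is only weak convergence and the limit may carry atoms.
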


The transfer of the empirical second-moment convergence to the residual
setting is established separately in the supplementary material and is not
inherited from the statement on the limiting spectral distribution.

\begin{coro}[Residual zero atom and threshold]
\label{coro:res-atom}
We have
\begin{equation}
  \begin{aligned}
    \nu_{\mathrm{res}}(\{0\})
      &= \mathbb{E}_\pi\big[\mathbf{1}_{\{K=0\}}\,e^{-U}\big],\\
    p_{\mathrm{res}}(\pi) &:= 1-\nu_{\mathrm{res}}(\{0\}).
  \end{aligned}
  \label{eq:res-atom}
\end{equation}
Under the identity-population and entry assumptions of
Theorem~\ref{thm:rankgap}, with (A4R) in place of (A4), all three regimes of
that theorem hold with $p_{\mathrm{res}}$ in place of $p$. The noncritical
case transfers through active-column domination and the critical case through
the residual limiting spectral distribution and its fixed-point equation.
\end{coro}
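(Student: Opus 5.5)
The zero-atom formula comes directly from Theorem~\ref{thm:res-count}, and the three regimes are then transferred from Theorem~\ref{thm:rankgap} by re-running its argument with the residual active set.

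\textbf{Zero atom.} By Theorem~\ref{thm:res-count}, $\nu_{\mathrm{res}}=\mathcal{L}(K+P_U)$ with $P_U\mid(K,U)\sim\operatorname{Poi}(U)$. Since $K,P_U\ge0$, the event $\{K+P_U=0\}$ equals $\{K=0,P_U=0\}$. Conditioning on $(K,U)$ gives $\mathbb{P}(K+P_U=0)=\mathbb{E}_\pi[\mathbf{1}_{\{K=0\}}e^{-U}]$, with the convention $e^{-1}$ at the boundary $U=1$. Moreover, $\nu_N\Rightarrow\nu_{\mathrm{res}}$ on the integer lattice implies $\nu_N(\{0\})\to\nu_{\mathrm{res}}(\{0\})$ in probability, because $\{0\}$ is a continuity set of any law on $\mathbb{Z}_{\ge0}$. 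Hence $M_N/N\to p_{\mathrm{res}}$ in probability, where $M_N:=\#\{i:n_i>0\}$. Applying Proposition~\ref{prop:atom} to the residual spectral law of Theorem~\ref{thm:res-lsd} gives $\mu_Q^{\mathrm{res}}(\{0\})=\max(0,1-p_{\mathrm{res}}/c)$. That proposition's argument relies only on the atom rule for $\mathcal{T}_c$, not on the mixed-Poisson form.

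\textbf{Noncritical regimes.} I use active-column domination. Let $A$ be the active index set, $X_A$ the corresponding columns, and $\underline n:=\min_{i\in A}n_i\ge1$. Then
\[
S_{r,N}\succeq \tfrac1N X_AX_A^\top=\tfrac{M_N}{N}\cdot\tfrac1{M_N}X_AX_A^\top .
\]
Counts are generated from the weights and independent auxiliary randomness, so $A$ is independent of $Z_N$. Conditionally on $A$, the matrix $X_A$ has $M_N$ i.i.d.\ columns with aspect ratio $d/M_N\to c/p_{\mathrm{res}}$. The Bai--Yin extreme-eigenvalue theorem, under the finite fourth moment hypothesis, gives the following. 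The smallest eigenvalue of $\frac1{M_N}X_AX_A^\top$ tends to $(1-\sqrt{c/p})^2$ when $c<p$. The smallest positive eigenvalue tends to $(\sqrt{c/p}-1)^2$ when $c>p$. Multiplying by $M_N/N\to p$ yields $(\sqrt p-\sqrt c)^2$ in both cases. Full rank for $c<p$ follows. For $c>p$, the Löwner domination controls $\lambda^+_{\min}$ because $\operatorname{range}S_{r,N}=\operatorname{range}X_A$: on that common range the domination inequality bounds the restricted quadratic form from below.

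Two technical points need care. The conditioning on the random $M_N$ is handled by a subsequence and conditioning bridge, as in the paper's supplementary arguments; on each subsequence one applies Bai--Yin uniformly in $M_N$ near $pN$. In the regime $R_N=0$ all counts are deterministic, and the statement is immediate.

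\textbf{Critical case.} This is the main obstacle, since the Bai--Yin benchmark degenerates to $0$ and gives only a trivial bound. First, $\mu_Q^{\mathrm{res}}(\{0\})=0$ follows from the atom formula at $c=p_{\mathrm{res}}$. Second, I will show $\inf\operatorname{supp}\mu_Q^{\mathrm{res}}=0$ from the fixed-point equation~\eqref{eq:fp}, using the edge map~\eqref{eq:edge-map} with $\nu=\nu_{\mathrm{res}}$. Split
\[
z(u)=-\tfrac1u+y\,p\,\tfrac1u-y\int_{t\ge1}\tfrac{t}{u(1+tu)}\,\nu(dt)\cdot\ldots
\]
At $yp=1$ the leading $1/u$ terms cancel as $u\to\infty$. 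The remaining term is positive and of order $u^{-2}$, so $z(u)\downarrow0$ along the increasing branch with no finite critical point. Hence the lower edge is $0$, in line with the remark after Proposition~\ref{prop:edges}. Third, given that no atom is present and the support reaches $0$, weak convergence from Theorem~\ref{thm:res-lsd} gives a positive fraction of eigenvalues in $(0,\varepsilon)$ with high probability. This forces $\lambda^+_{\min}(S_{r,N})\to0$ in probability. If the asymptotic expansion of $z(u)$ is delicate because $\nu_{\mathrm{res}}$ may have a heavy enough tail, I would first truncate to $\nu^{(M)}$. Truncation leaves the zero mass unchanged, and continuity of the fixed point then passes the support infimum to the limit.
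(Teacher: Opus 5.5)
Your zero-atom computation, the occupancy limit $M_N/N\to p_{\mathrm{res}}$, and the two noncritical regimes via active-column domination and Bai--Yin follow the paper's proof. Your critical-case plan is also the paper's: no atom, then support touching zero, then weak convergence forcing $\lambda^+_{\min}\to0$. Writing it through the edge map is just the paper's negative-axis fixed-point identity in the variable $u=c\,m_Q(\zeta)$, since $\zeta=c\,z(u)$. However, the key computation in the critical step is wrong as written, and the error matters.

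\textbf{The sign error.} The correct splitting is $\frac{t}{1+tu}=\frac1u-\frac{1}{u(1+tu)}$ for $t>0$, not $\frac{t}{u(1+tu)}$ as in your display. It gives the exact identity, at $yp=1$,
\[
z(u)=-\frac{y}{u}\int_{(0,\infty)}\frac{\nu(dt)}{1+tu},
\qquad
z'(u)=\frac{y}{u^{2}}\int_{(0,\infty)}\frac{1+2tu}{(1+tu)^{2}}\,\nu(dt)>0 .
\]
So the remainder is strictly \emph{negative} for every $u>0$, not positive. Hence $z$ increases from $-\infty$ to $0^-$ on $(0,\infty)$ and never reaches $0$; your phrase ``$z(u)\downarrow0$ along the increasing branch'' is self-contradictory. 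With your stated sign, $z$ would vanish at some finite $u_0$ with $z'(u_0)>0$. That is the signature of a gap at zero, the opposite of what you need.

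\textbf{The missing link.} ``No finite critical point, hence the lower edge is $0$'' needs the step the paper supplies directly. For $\zeta<0$, $u=c\,m_Q(\zeta)>0$ solves $c\,z(u)=\zeta$. If $u$ stayed $\le C$ along $\zeta_j\uparrow0$, the identity above would give $|\zeta_j|\ge C^{-1}\int_{(0,\infty)}(1+tC)^{-1}\nu(dt)>0$, a contradiction. So $m_Q(\zeta)\to\infty$ as $\zeta\uparrow0$. Since $\mu_Q^{\mathrm{res}}(\{0\})=0$, a gap $[0,\varepsilon)$ in the support would force $m_Q\le1/\varepsilon$ on the negative axis, which is impossible. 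This is exactly Steps~1 and~3 of the paper's critical-case argument.

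\textbf{The truncation fallback.} Drop it. Weak continuity of $\mathcal{T}_c$ does not transfer $\inf\operatorname{supp}$ to the limit, because support endpoints are not weakly continuous; a vanishing mass near $0$ can disappear in the limit. The fallback is also unnecessary: the identity above is exact for every $\nu$, since $0<(1+tu)^{-1}\le1$, so heavy tails play no role.
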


\begin{prop}[Residual second-moment cost]
\label{prop:res-cost}
Under (A4R) we have $\mathbb{E}[K+P_U\mid K,U]=K+U$ and
$\operatorname{Var}(K+P_U\mid K,U)=U$, whence
$b_2(\nu_{\mathrm{res}})=r_2+\mathbb{E}_\pi U$. Under the residual analogue of
the assumptions of Proposition~\ref{prop:moments},
\begin{equation}
  m_{2,\mathrm{res}}-m_{2,\mathrm{pre}}
    = c\,a_1^2\,\mathbb{E}_\pi U\ \le\ c\,a_1^2,
  \label{eq:res-cost}
\end{equation}
with strict inequality if and only if $a_1>0$ and $\mathbb{E}_\pi U<1$. The
comparison is one of asymptotic normalized second spectral moments; no
ordering for arbitrary spectral functionals is claimed.
\end{prop}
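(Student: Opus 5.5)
The plan has three parts: compute the first two conditional moments of $K+P_U$, turn them into $b_2(\nu_{\mathrm{res}})$, and then substitute into the moment formula of Proposition~\ref{prop:moments} with $\nu_{\mathrm{res}}$ in place of $\operatorname{MixPoi}(\rho)$.

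\emph{Step 1 (count-law moments).} Given $(K,U)$, $K$ is a constant and $P_U\sim\operatorname{Poi}(U)$, so $\mathbb{E}[K+P_U\mid K,U]=K+U$ and $\operatorname{Var}(K+P_U\mid K,U)=U$. The law of total variance then gives
\[
b_2(\nu_{\mathrm{res}})=\mathbb{E}_\pi\big[U+(K+U)^2\big]=\mathbb{E}_\pi U+\int\lambda^2\rho(d\lambda)=r_2+\mathbb{E}_\pi U.
\]
Here I use that $\rho$ is the push-forward of $\pi$ under $(k,u)\mapsto k+u$, as in (A4R). The mean is $\mathbb{E}_\pi(K+U)=1$, as recorded after (A5). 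This already matches the limit of $\frac1N\sum_in_i^2$ in Theorem~\ref{thm:res-count}.

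\emph{Step 2 (spectral second moment as a function of the count law).} I then isolate the structural fact behind \eqref{eq:moments}. For a count law $\eta$ with mean one and second moment $b_2(\eta)$, the law $H\boxtimes\mathcal{T}_c(\eta)$ has first moment $a_1$ and second moment $m_{2,H}+c\,a_1^2\,b_2(\eta)-c\,a_1^2$. To check this, read off the first two moments of $\mathcal{T}_c(\eta)$ from the fixed point \eqref{eq:fp} by expanding $m(z)=-\sum_k m_k z^{-k-1}$ at infinity. This gives $m_1=\int t\,\eta=1$ and $m_2=\int t^2\eta+c(\int t\,\eta)^2=b_2(\eta)+c$, consistent with $m_3=1+3cb_2+c^2b_3$ quoted in Proposition~\ref{prop:ess}.

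Next apply the free multiplicative moment formula $m_2(H\boxtimes\mu)=m_{2,H}\,m_1(\mu)^2+a_1^2\,m_2(\mu)-a_1^2 m_1(\mu)^2$, which yields $m_{2,H}+a_1^2(b_2+c-1)$. As a check, at $\eta=\rho$ this reproduces $m_{2,\mathrm{pre}}$ in \eqref{eq:moments} only after the normalization is fixed consistently. Rather than rederive normalizations, I will simply take \eqref{eq:moments} as the template. Its two cases, $\eta=\rho$ with $b_2=r_2$ and $\eta=\operatorname{MixPoi}(\rho)$ with $b_2=1+r_2$, differ by $c\,a_1^2$ times the difference of the $b_2$'s, and the general formula is affine in $b_2$. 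Hence $m_2(H\boxtimes\mathcal{T}_c(\eta))-m_{2,\mathrm{pre}}=c\,a_1^2\,(b_2(\eta)-r_2)$.

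With $\eta=\nu_{\mathrm{res}}$, Step 1 gives the increment $c\,a_1^2\,\mathbb{E}_\pi U$.

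\emph{Step 3 (from limit laws to empirical moments).} This is the main obstacle. The identity above concerns moments of the limiting laws, but the proposition refers to normalized empirical second spectral moments, and weak convergence from Theorem~\ref{thm:res-lsd} does not control those. Under the residual analogue of the hypotheses of Proposition~\ref{prop:moments}, I would argue directly.

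First, expand $\frac1d\operatorname{tr}S_{r,N}^2=\frac1{dN^2}\sum_{i,j}n_in_j(x_i^\top x_j)^2$ and take expectation over the Gaussian (or fourth-moment) $Z_N$ conditionally on the counts. This gives $\frac{1}{dN^2}\big[\sum_i n_i^2\,\mathbb{E}\|x_i\|^4+\sum_{i\ne j}n_in_j\operatorname{tr}\Sigma_N^2\big]$. The diagonal part is asymptotically $c\,a_1^2\frac1N\sum_in_i^2$, and the off-diagonal part is asymptotically $m_{2,H}$ because $\sum_in_i=N$.

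Second, show that the conditional variance vanishes. Then convergence in probability follows from $\frac1N\sum_in_i^2\to r_2+\mathbb{E}_\pi U$ in Theorem~\ref{thm:res-count}, which needs the count-tail control from the supplement. Doing the same computation with $\lambda_i$ in place of $n_i$ gives $m_{2,\mathrm{pre}}$, and subtracting yields \eqref{eq:res-cost}.

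\emph{Step 4 (inequality and equality case).} Since $U\in[0,1]$, we have $\mathbb{E}_\pi U\le1$, so the increment is at most $c\,a_1^2$. The difference is $c\,a_1^2(1-\mathbb{E}_\pi U)$ with $c>0$. It is therefore strictly positive exactly when $a_1\neq0$ and $\mathbb{E}_\pi U<1$, and $a_1\ge0$ because $H$ lives on $[0,\infty)$, so this is the same as $a_1>0$ and $\mathbb{E}_\pi U<1$.
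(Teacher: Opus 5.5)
Your Steps~1 and~4, and the conditional-expectation computation in Step~3, follow the paper's proof. The conditional Poisson moments give $b_2(\nu_{\mathrm{res}})=r_2+\mathbb{E}_\pi U$. The Wick identity of Section~\ref{sec:S9} reduces the limit to $m_{2,H}+c\,a_1^2\lim\frac1N\sum_i n_i^2$, the count second moment comes from Theorem~\ref{thm:res-count}, and strictness follows from $U\le1$ and $a_1\ge0$.

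Step~2, however, is wrong as written. Expanding \eqref{eq:fp} at infinity gives $m_1=\int t\,\eta(dt)$ and $m_2=m_1^2+c\int t^2\,\eta(dt)$. So $m_2(\mathcal{T}_c(\eta))=1+c\,b_2(\eta)$ for mean-one $\eta$ (the recursion recorded in Section~\ref{sec:S6}), not $b_2(\eta)+c$. The two agree only if $b_2=1$ or $c=1$; with $H=\delta_1$, your value would give $m_{2,\mathrm{pre}}=r_2+c$ instead of $1+c\,r_2$. With the correct value, the free multiplicative formula you quote gives $m_{2,H}+c\,a_1^2\,b_2(\eta)$ immediately, with no normalization left to fix.

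Your patch (keep affinity in $b_2$ but read the slope off \eqref{eq:moments}) is not a valid inference. The only affine formula you derived has slope $a_1^2$ and contradicts the template, so its affinity cannot be trusted apart from its coefficients. Moreover, the two cases $\rho$ and $\operatorname{MixPoi}(\rho)$ of \eqref{eq:moments} do not determine the value at a third law $\nu_{\mathrm{res}}$ unless the functional form has been established independently. The step is also unnecessary, because Step~3 computes the empirical limit directly for arbitrary exogenous counts. Delete it or correct the expansion.

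The remaining difference is in how Step~3 passes from conditional expectation to convergence in probability. The paper never bounds the conditional variance of $\frac1d\operatorname{tr}S_{r,N}^2$. Following Section~\ref{sec:S9}, it truncates the counts at $M$. The truncated matrix then has $O_{\mathbb P}(1)$ operator norm, so the weak convergence of Theorem~\ref{thm:res-lsd} upgrades to second-moment convergence. The nonnegative truncation error is controlled through its conditional mean via $\mathbb{P}(X>\varepsilon)\le\mathbb{P}(A>\delta)+\delta/\varepsilon$. This uses nothing beyond $\frac1N\sum_i n_i^2\to b_2$.

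Your direct second-moment route also works, but the step you leave as ``show that the conditional variance vanishes'' is where all of its content lies. The conditional variance contains a term of order $N^{-3}\sum_i n_i^4$ whose coefficient involves eighth moments of the entries. It therefore needs the Gaussian assumption (or at least eighth moments), not merely the fourth moments that suffice for the expectation. It also needs $\max_i n_i=o_{\mathbb P}(N)$. That holds under (A4R): $n_i\le\lambda_i+\tilde n_i$ with $\max_i\lambda_i\le(\sum_i\lambda_i^2)^{1/2}=O(\sqrt N)$, and $\tilde n_i\sim\mathrm{Bin}(R_N,u_i/R_N)$ has mean $u_i<1$. Once written out, your route is more elementary, with no limiting-spectral-distribution input and no operator-norm bound. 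The paper's route needs only the second count moment and reuses Section~\ref{sec:S9} verbatim.
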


The cost vanishes if and only if $a_1=0$ or $\mathbb{E}_\pi U=0$; assuming
$a_1>0$, if and only if $\mathbb{E}_\pi U=0$, that is, if and only if $\pi$ is
supported on $\{U=0\}$. This is a property of the limit law and not of every
finite sample. For $a_1>0$, equality with the multinomial asymptotic cost
occurs precisely at the integer-boundary limit $\pi=\delta_{(0,1)}$, since
$\mathbb{E}_\pi(K+U)=1$ together with $\mathbb{E}_\pi U=1$ forces $U=1$ and
$K=0$ almost surely under $\pi$. Exact zero cost at finite $N$ holds whenever
the counts are deterministic, in particular when every
$\lambda_i\in\mathbb{Z}_{\ge0}$, and under the geometric degeneracy of
Remark~\ref{rem:vanish}.

\begin{coro}[Asymptotic rank-preservation ordering]
\label{coro:rank-order}
For any joint limit $\pi$,
\begin{equation}
  \nu_{\mathrm{multi}}(\{0\})-\nu_{\mathrm{res}}(\{0\})
    = \mathbb{E}_\pi\big[\mathbf{1}_{\{K\ge1\}}\,e^{-(K+U)}\big]\ \ge\ 0,
  \label{eq:rank-order}
\end{equation}
so that $\mu_{\mathrm{res}}(\{0\})\le\mu_{\mathrm{multi}}(\{0\})$: residual
resampling preserves asymptotically at least as much rank as multinomial
resampling. The statement orders limiting occupancy and the zero mass of the
limiting spectral distribution; it is not a sample-path rank comparison.

The count-level inequality is strict if and only if $\pi(K\ge1)>0$. At the
spectral level, writing $q_{\mathrm{multi}}:=(1-p_{\mathrm{multi}}/c)_+$ and
$h_0:=H(\{0\})$, we have
$\mu_{\mathrm{res}}(\{0\})<\mu_{\mathrm{multi}}(\{0\})$ if and only if
$\pi(K\ge1)>0$ and $q_{\mathrm{multi}}>h_0$; in the identity-population case
$h_0=0$ and the condition reduces to $c>p_{\mathrm{multi}}$. On the window
$c\in(p_{\mathrm{multi}},p_{\mathrm{res}})$, multinomial resampling already
carries a macroscopic zero atom while residual resampling has full rank with
probability tending to one.
\end{coro}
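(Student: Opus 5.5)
The plan is to prove the count-level identity \eqref{eq:rank-order} first, then pass to the spectral zero masses through Proposition~\ref{prop:atom} and its residual analogue, Corollary~\ref{coro:res-atom}.

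\textbf{Count-level identity.} Under (A4R), $\rho$ is the push-forward of $\pi$ under $(k,u)\mapsto k+u$. Theorem~\ref{thm:multi-count} gives $\nu_{\mathrm{multi}}(\{0\})=\mathcal{L}_\rho(1)=\mathbb{E}_\pi e^{-(K+U)}$, and Corollary~\ref{coro:res-atom} gives $\nu_{\mathrm{res}}(\{0\})=\mathbb{E}_\pi[\mathbf{1}_{\{K=0\}}e^{-U}]$. Splitting the first expectation on $\{K=0\}$ and $\{K\ge1\}$, the $\{K=0\}$ parts coincide, and what remains is exactly \eqref{eq:rank-order}. The integrand $\mathbf{1}_{\{K\ge1\}}e^{-(K+U)}$ is strictly positive on $\{K\ge1\}$, since $K+U\le K+1<\infty$. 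Hence the difference is nonnegative, and it is strictly positive if and only if $\pi(K\ge1)>0$.

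\textbf{Spectral level.} Write $g(x)=(1-(1-x)/c)_+$. This function is nondecreasing in $x$, and strictly increasing on $\{x:1-(1-x)/c>0\}$. By Proposition~\ref{prop:atom}, combined with the maximum-atom rule, and by Theorem~\ref{thm:res-lsd} together with the same atom formula applied to $\mathcal{T}_c(\nu_{\mathrm{res}})$, we have
\[
\mu_{\mathsf{s}}(\{0\})=\max\{h_0,\,g(\nu_{\mathsf{s}}(\{0\}))\}.
\]
The atom formula of Proposition~\ref{prop:atom} depends only on $\nu(\{0\})$ through the fixed-point equation of Definition~\ref{def:Tc}, so I would note that its proof applies verbatim to any count law. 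Monotonicity of $g$ and of $\max\{h_0,\cdot\}$ then gives the weak ordering $\mu_{\mathrm{res}}(\{0\})\le\mu_{\mathrm{multi}}(\{0\})$.

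\textbf{Strictness.} Note that $q_{\mathrm{multi}}=g(\nu_{\mathrm{multi}}(\{0\}))$.
\begin{itemize}
\item If $\pi(K\ge1)=0$, the two count zero masses are equal, so the spectral zero masses are equal.
\item If $q_{\mathrm{multi}}\le h_0$, then $\mu_{\mathrm{multi}}(\{0\})=h_0$. The lower bound $\mu_{\mathrm{res}}(\{0\})\ge h_0$ then forces equality.
\item Conversely, suppose $\pi(K\ge1)>0$ and $q_{\mathrm{multi}}>h_0$. Then $q_{\mathrm{multi}}>0$, so $g$ is strictly increasing near $\nu_{\mathrm{multi}}(\{0\})$. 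Since $\nu_{\mathrm{res}}(\{0\})<\nu_{\mathrm{multi}}(\{0\})$, we get $g(\nu_{\mathrm{res}}(\{0\}))<q_{\mathrm{multi}}$, and hence $\max\{h_0,g(\nu_{\mathrm{res}}(\{0\}))\}<q_{\mathrm{multi}}$.
\end{itemize}
In the identity case $h_0=0$, and $q_{\mathrm{multi}}>0$ is equivalent to $c>p_{\mathrm{multi}}$. Moreover, $\pi(K\ge1)>0$ is then automatic on the stated window, because that window is nonempty only when $p_{\mathrm{res}}>p_{\mathrm{multi}}$.

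\textbf{The window $c\in(p_{\mathrm{multi}},p_{\mathrm{res}})$.} Theorem~\ref{thm:rankgap}(ii) gives multinomial resampling a zero atom of mass $1-p_{\mathrm{multi}}/c>0$. Corollary~\ref{coro:res-atom} transfers Theorem~\ref{thm:rankgap}(i) with $c<p_{\mathrm{res}}$, giving full rank for residual resampling with probability tending to one.

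\textbf{Main obstacle.} The delicate step is justifying the atom formula for the residual count law and for $H\boxtimes$ in the separable case. I would argue it directly from \eqref{eq:Tc}. The atom of $\eta\boxtimes\mathrm{MP}(1/c)$ at $0$ is $\max(\eta(\{0\}),1-1/c)$ by the maximum-atom rule~\cite{belinschi-atoms}. Rescaling through \eqref{eq:Tc} then yields $g(\eta(\{0\}))$, with no dependence on the scheme.
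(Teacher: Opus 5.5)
Your argument is the paper's own proof in Section~\ref{sec:S18}. You split $\mathbb{E}_\pi e^{-(K+U)}$ on $\{K=0\}$ and $\{K\ge1\}$, push the count zero masses through the nondecreasing map $g$ and then through $\max\{h_0,\cdot\}$, do the same three-case strictness analysis, and read the window off Theorem~\ref{thm:rankgap} and Corollary~\ref{coro:res-atom}. Your two explicit remarks are correct additions that the paper leaves implicit: the atom formula of Proposition~\ref{prop:atom} uses only $\eta(\{0\})$ and so covers $\nu_{\mathrm{res}}$, and $\pi(K\ge1)>0$ is automatic whenever the window is nonempty.

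One computation in your final paragraph needs correcting. The zero atom of $\mathrm{MP}(1/c)$ is $(1-c)_+$, not $1-1/c$; the latter is the atom of $\mathrm{MP}(c)$. With your value the rescaling through \eqref{eq:Tc} does not give $g$:
\begin{itemize}
  \item For $c<1$ it gives $1-(1-q)/c$, which is negative when $q<1-c$.
  \item For $\eta=\delta_1$ and $c=2$ it gives $3/4$ instead of $\mathrm{MP}(2)(\{0\})=1/2$.
\end{itemize}
With the correct atom, $c\,\mathcal{T}_c(\eta)(\{0\})+(1-c)=\max\{q,(1-c)_+\}$. This yields $\mathcal{T}_c(\eta)(\{0\})=g(q)$ in both regimes $c\le1$ and $c>1$, exactly as in the paper's proof of Proposition~\ref{prop:atom}. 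With that fix, your argument is complete.
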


\begin{coro}[Exact-uniform-weight contrast]
\label{coro:uniform-contrast}
Let $w_i=1/N$ for every $N$. Residual resampling then leaves the second-moment
matrix unchanged at every finite $N$, so that in the identity-population
setting its limiting spectral distribution is $\mathrm{MP}(c)$, with zero
resampling cost and $p_{\mathrm{res}}=1$; in a general Gaussian population
model the unchanged matrix has limiting spectral distribution
$H\boxtimes\mathrm{MP}(c)$. Multinomial resampling produces
$\nu=\operatorname{Poi}(1)$, cost $c\,a_1^2$, and the transition at
$p_{\mathrm{multi}}=1-e^{-1}$. As classified in
Theorem~\ref{thm:classification}, sequences satisfying the corresponding
joint-limit and moment assumptions realize every intermediate residual limit
$(1-\alpha)\delta_1+\alpha\operatorname{Poi}(1)$ with $\alpha\in[0,1]$.
\end{coro}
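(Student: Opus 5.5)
The corollary bundles three claims: that residual resampling leaves the matrix unchanged at every $N$, the identification of the resulting limits, and the multinomial side. I would verify them in that order.

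\emph{Residual side.} With $w_i=1/N$ we have $\lambda_i=1$ for every $i$. Hence $\lfloor\lambda_i\rfloor=1$, $u_i=\operatorname{frac}(\lambda_i)=0$ and $R_N=0$. By the convention in Theorem~\ref{thm:res-count}, $\tilde{\bm n}=0$, so $n_i\equiv1$ deterministically. Then $D_N=I_N=\Lambda_N$ and $S_{r,N}=S_{w,N}$ identically for every $N$. The conditional covariance of $\bm n$ is zero, so Proposition~\ref{prop:cost} gives zero finite-sample cost.

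For the limit, $\pi_N=\delta_{(1,0)}$ for every $N$, so (A4R) holds with $\pi=\delta_{(1,0)}$, and $\int\lambda^2\rho_N=1$. This is the case $\alpha=0$ of Theorem~\ref{thm:classification}, giving $\nu_{\mathrm{res}}=\delta_1$ and $\mathbb{E}_\pi U=0$. Theorem~\ref{thm:res-lsd} then yields $\mathcal{T}_c(\delta_1)=\mathrm{MP}(c)$, using the normalization check in Remark~\ref{rem:normalization}. One can equally apply the classical Marchenko--Pastur theorem directly to $\frac1N X_NX_N^\top$, since the matrix is unchanged.

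Corollary~\ref{coro:res-atom} gives $\nu_{\mathrm{res}}(\{0\})=\mathbb{E}_\pi[\mathbf 1_{\{K=0\}}e^{-U}]=0$, hence $p_{\mathrm{res}}=1$. Proposition~\ref{prop:res-cost} gives asymptotic cost $c\,a_1^2\cdot0=0$. In the Gaussian population model, $S_{r,N}=S_{w,N}=\Sigma_N^{1/2}(\frac1NZ_NZ_N^\top)\Sigma_N^{1/2}$, and Theorem~\ref{thm:lsd-gen} with $\nu=\delta_1$ (equivalently the residual version in Theorem~\ref{thm:res-lsd}) gives $H\boxtimes\mathrm{MP}(c)$.

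\emph{Multinomial side.} Here $\rho_N=\delta_1$ satisfies (A4) with $\rho=\delta_1$ and $r_2=1$. Theorem~\ref{thm:multi-count} gives $\nu=\operatorname{MixPoi}(\delta_1)=\operatorname{Poi}(1)$. Proposition~\ref{prop:moments} gives the increment $c\,a_1^2$; alternatively, Proposition~\ref{prop:popcost} with $s_{2,N}=1/N\to0$ gives the same value in expectation. The threshold is $p_{\mathrm{multi}}=1-\mathcal{L}_{\delta_1}(1)=1-e^{-1}$, and the transition itself is Theorem~\ref{thm:rankgap}.

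\emph{Final sentence.} The last claim about intermediate limits is precisely the realizability part of Theorem~\ref{thm:classification}, so it only needs citing.

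\emph{Main obstacle.} The one delicate point is the limiting second-moment statement for residual resampling. The limiting spectral distribution alone does not determine second moments, so I must make sure the required moment convergence holds. With deterministic counts $n_i\equiv1$ this is trivial: the classical Marchenko--Pastur second-moment convergence for the unchanged matrix suffices, and no count-tail control is needed. The only other care required is the $R_N=0$ convention in the definition of residual resampling.
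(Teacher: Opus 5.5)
Your proposal is correct and follows the paper's own argument. Since $R_N=0$ forces $n_i\equiv1$, the matrix is unchanged, and the residual spectral transform gives $\mathcal{T}_c(\delta_1)=\mathrm{MP}(c)$, or $H\boxtimes\mathrm{MP}(c)$ in the Gaussian population case; the multinomial side follows from $\rho=\delta_1$, and the interpolation clause is cited from Theorem~\ref{thm:classification}. One small citation point: Theorem~\ref{thm:lsd-gen} is stated for multinomial counts, so for the Gaussian population case the appropriate reference is the residual version in Theorem~\ref{thm:res-lsd}, which you also give.
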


\begin{coro}[Finite-sample Frobenius ordering]
\label{coro:frob-order}
Under (A1)--(A2), set $k_i=\lfloor\lambda_i\rfloor$,
$u_i=\operatorname{frac}(\lambda_i)$, $L=\sum_ik_i$ and $R=\sum_iu_i$. When
$L,R>0$,
\begin{align}
  \operatorname{Cov}_{\mathrm{multi}}-\operatorname{Cov}_{\mathrm{res}}
    =\ &\operatorname{diag}(k)-\tfrac{kk^\top}{L}\nonumber\\
    &+\tfrac{LR}{N}\big(\tfrac kL-\tfrac uR\big)
      \big(\tfrac kL-\tfrac uR\big)^\top\ \succeq\ 0.
  \label{eq:frob-order}
\end{align}
The boundary cases are immediate: if $R=0$ then
$\operatorname{Cov}_{\mathrm{res}}=0$, and if $L=0$ the two covariances
coincide. Hence, by the quadratic form of Proposition~\ref{prop:cost} with
$G\circ G$ positive semidefinite,
\begin{equation}
  0\le\mathbb{E}\big[\|S_{\mathrm{res}}-S_w\|_F^2\mid X,w\big]
    \le\mathbb{E}\big[\|S_{\mathrm{multi}}-S_w\|_F^2\mid X,w\big].
  \label{eq:frob-ineq}
\end{equation}
\end{coro}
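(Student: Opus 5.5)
The plan is to compute both count covariances explicitly, verify the claimed decomposition by direct expansion, show each piece is positive semidefinite, and then apply the quadratic form of Proposition~\ref{prop:cost}. Throughout, $\lambda_i=Nw_i=k_i+u_i$ and $\sum_i\lambda_i=N$, so $N=L+R$. This identity is what makes the algebra close.

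\textbf{Step 1 (the two covariances).} For multinomial resampling, $\bm n\sim\mathrm{Multinomial}(N;w)$, so
\[
\operatorname{Cov}_{\mathrm{multi}}=N(\operatorname{diag}(w)-ww^\top)=\operatorname{diag}(\lambda)-\tfrac1N\lambda\lambda^\top .
\]
For residual resampling the floor part is deterministic. Hence $\operatorname{Cov}_{\mathrm{res}}$ is the covariance of $\mathrm{Multinomial}(R;u/R)$, which gives
\[
\operatorname{Cov}_{\mathrm{res}}=\operatorname{diag}(u)-\tfrac1R uu^\top .
\]
This is exact when $R$ is an integer, which it is, since $R=N-L$. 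Subtracting and using $\lambda=k+u$ yields
\[
\operatorname{Cov}_{\mathrm{multi}}-\operatorname{Cov}_{\mathrm{res}}=\operatorname{diag}(k)-\tfrac1N(k+u)(k+u)^\top+\tfrac1R uu^\top .
\]

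\textbf{Step 2 (matching the stated form).} I would expand the rank-one term:
\[
\tfrac{LR}{N}\big(\tfrac kL-\tfrac uR\big)\big(\tfrac kL-\tfrac uR\big)^\top=\tfrac{R}{NL}kk^\top-\tfrac1N(ku^\top+uk^\top)+\tfrac{L}{NR}uu^\top .
\]
Adding $-kk^\top/L$, the $kk^\top$ coefficient becomes $(R-N)/(NL)=-1/N$, using $N=L+R$. The $uu^\top$ coefficient is $L/(NR)=1/R-1/N$. The cross terms already agree. This reproduces the expression from Step~1 and hence the identity \eqref{eq:frob-order}.

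\textbf{Step 3 (positive semidefiniteness).} The matrix $\operatorname{diag}(k)-kk^\top/L$ equals $L$ times the covariance of a $\mathrm{Multinomial}(1;k/L)$ indicator, so it is positive semidefinite. Equivalently, by Cauchy--Schwarz, $(\sum_ik_ix_i)^2\le L\sum_ik_ix_i^2$. The second summand is a nonnegative multiple of a rank-one outer product. The boundary cases are handled separately. If $R=0$, the residual counts are deterministic, so $\operatorname{Cov}_{\mathrm{res}}=0\preceq\operatorname{Cov}_{\mathrm{multi}}$. If $L=0$, then $u=\lambda$ and $R=N$, so the two covariances coincide.

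\textbf{Step 4 (Frobenius ordering).} The matrix $G\circ G$ is positive semidefinite by the Schur product theorem. For positive semidefinite $A,B$ one has $\operatorname{tr}(AB)=\operatorname{tr}(A^{1/2}BA^{1/2})\ge0$. Applying this with $A=G\circ G$ and $B=\operatorname{Cov}_{\mathrm{multi}}-\operatorname{Cov}_{\mathrm{res}}$ in the identity \eqref{eq:cost-identity} of Proposition~\ref{prop:cost} gives the right inequality of \eqref{eq:frob-ineq}. The left inequality is the nonnegativity already stated there.

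The only delicate point is the coefficient bookkeeping in Step~2, which hinges on substituting $N=L+R$. Everything else is standard.
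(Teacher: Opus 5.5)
Your proposal is correct and follows the paper's proof essentially step for step. It uses the same explicit forms $\operatorname{diag}(\lambda)-\lambda\lambda^\top/N$ and $\operatorname{diag}(u)-uu^\top/R$, the same coefficient matching via $N=L+R$, the same positive-semidefiniteness argument for the two summands (your "$L$ times the covariance of a $\mathrm{Multinomial}(1;k/L)$ draw" states the scaling more precisely than the paper's wording), and the same trace-positivity step with $G\circ G\succeq 0$.
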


Corollary~\ref{coro:frob-order} is a count-covariance-level comparison in the
error energy associated with the second spectral moment cost of this paper. No
ordering for arbitrary convex spectral functionals is claimed, and the result
is not presented as a general variance-comparison theory for resampling
schemes.

\section{Numerical Study}
\label{sec:numerics}

The numerical study is organized by the logical layers of the theory rather
than by experiment size: the count laws of Section~\ref{sec:count} are
examined first, then the spectral map of Definition~\ref{def:Tc}, and finally
the consequences established in Sections~\ref{sec:lsd}--\ref{sec:res}. Every
quantity reported below is compared against a closed-form prediction fixed
prior to the corresponding simulation, so that each comparison admits a
pass-or-fail reading rather than a qualitative one.

\subsection{Design and validation protocol}
\label{sec:ix-design}

For each experiment the target quantities, sample sizes, repetition counts
and acceptance criteria were fixed in advance; the theoretical values are
evaluated in closed form from the statements of
Sections~\ref{sec:count}--\ref{sec:res} and are not fitted. All runs use a
single fixed seed, no reported result is selected from a larger pool of runs,
and each figure addresses a single question. Within each repetition the
schemes are compared on paired particle clouds, so that differences between
schemes are not confounded with variation between clouds, and confidence
intervals are computed across independent repetitions and reported as one
standard error of the mean. Atoms are plotted as separate point masses and are
never merged into a density. Since Theorem~\ref{thm:classification} depends on
the manner in which the weights approach the integers, integer parts are
stored as integers and $\lambda_i$ is never reconstructed from a
floating-point product $Nw_i$, and the endpoint construction uses $N=2^m$ so
that $1-1/N$ and $2-1/N$ are exactly representable.

\emph{Discrepancy metric.} Agreement between a simulated spectrum and a
predicted law is measured by a fixed-bandwidth Stieltjes discrepancy
\begin{equation}
  E_h := \big| m_{\mathrm{emp}}(x+\mathrm{i}h)
    - m_{\mathrm{th}}(x+\mathrm{i}h) \big|,
  \label{eq:Eh}
\end{equation}
evaluated on a stated grid and reported in both maximum and grid-averaged
form, together with a sensitivity check at bandwidth $h/2$. This choice avoids
the bandwidth arbitrariness of density estimation while remaining sensitive to
atoms and to edges. All spectral predictions are computed from the fixed-point
equation of Definition~\ref{def:Tc}. Since several classically equivalent
companion-side forms coincide at $\eta=\delta_1$ while governing a different
object in the present setting, recovery of the Marchenko--Pastur law alone
does not validate an implementation; the solver therefore passes a battery of
unit tests before any experiment is run, each reported with its realized
tolerance in the supplementary material~(S.23).
\subsection{The count-law-to-spectrum map}
\label{sec:ix-e1}

\emph{Design.} The configuration uses an identity population, Gaussian
entries, $N=1000$, $d=800$ so that $c=0.8$, and $n_{\mathrm{rep}}=20$. Two
weight profiles are considered: exactly uniform weights, and the two-point law
$\rho=\tfrac12\delta_{1/2}+\tfrac12\delta_{3/2}$ realized by an exact ratio
construction of mean exactly one. For each profile the empirical count
probability mass function is compared against $\operatorname{MixPoi}(\rho)$ and
$\nu_{\mathrm{res}}$, and the simulated spectra of the pre-resampling,
multinomial and residual matrices are compared against their $\mathcal{T}_c$
predictions, with atoms drawn separately.

\emph{Predictions.} All targets follow in closed form. With $H=\delta_1$,
$a_1=1$, $m_{2,H}=1$ and $c=0.8$, Propositions~\ref{prop:moments}
and~\ref{prop:res-cost} give

\begin{equation}
    \begin{aligned}
      m_2^{\mathrm{pre}} &= 1 + c\,r_2, \qquad
      m_2^{\mathrm{multi}} = 1 + c(1+r_2), \\[2pt]
      m_2^{\mathrm{res}} &= m_2^{\mathrm{pre}} + c\,\mathbb{E}_\pi U,
    \end{aligned}
    \label{eq:e1pred}
  \end{equation}
while Theorems~\ref{thm:multi-count} and~\ref{thm:res-count} give the count
zero masses $\mathcal{L}_\rho(1)$ and
$\mathbb{E}_\pi[\mathbf{1}_{\{K=0\}}e^{-U}]$, and
Proposition~\ref{prop:atom} converts each into a spectral atom. For the
uniform profile $r_2=1$ and $\mathbb{E}_\pi U=0$; for the two-point profile
$r_2=5/4$ and $\mathbb{E}_\pi U=1/2$, since both atoms are located at
fractional part $1/2$.

\begin{figure*}[!t]
  \centering
  \subfloat[pre-resampling]{\includegraphics{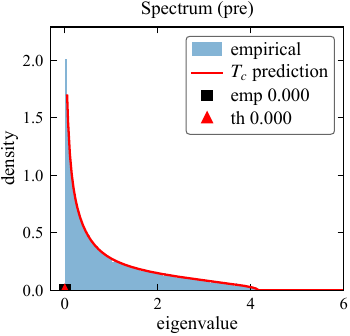}%
    \label{fig:e1a}}\hfil
  \subfloat[multinomial]{%
    \includegraphics{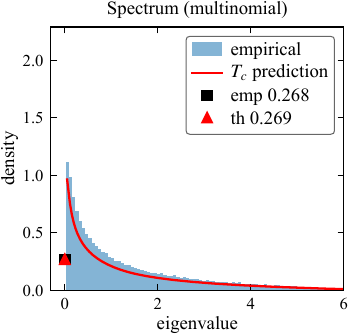}\label{fig:e1b}}\hfil
  \subfloat[residual]{%
    \includegraphics{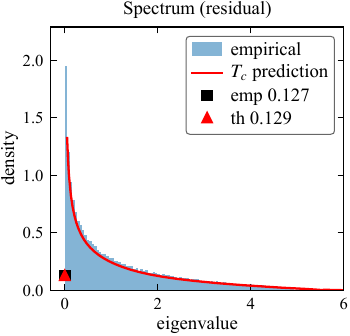}\label{fig:e1c}}
  \caption{One map, three count laws: simulated spectra against
    $\mathcal{T}_c$ predictions under the two-point weight law
    $\rho=\tfrac12\delta_{1/2}+\tfrac12\delta_{3/2}$, with an identity
    population and Gaussian entries ($N=1000$, $d=800$, $c=0.8$,
    $n_{\mathrm{rep}}=20$). Zero atoms are drawn as separate point masses at
    the origin and are not merged into the density; all three panels share
    both axes. The three schemes separate as predicted, with second moments
    $2.000$, $2.800$ and $2.400$. The exactly uniform profile is omitted,
    since there residual resampling reproduces the pre-resampling matrix
    itself and the two panels would coincide exactly, which is the content of
    Corollary~\ref{coro:uniform-contrast}; see Table~\ref{tab:e1}. The
    corresponding count probability mass functions are reported in the
    supplementary material~(S.25).}
  \label{fig:e1}
\end{figure*}

\begin{table*}[!t]
\renewcommand{\arraystretch}{1.25}
\caption{Experiment E1: pre-registered spectral quantities against theory.
  Empirical values are means over $n_{\mathrm{rep}}=20$ independent
  repetitions, reported with one standard error. Here $\nu(\{0\})$ denotes the
  count zero mass and $\mu_Q(\{0\})$ the induced spectral atom. Under the
  uniform profile the residual row reproduces the pre-resampling row exactly,
  and not merely to within the reported error. The first spectral moment is
  $1.000$ to within Monte Carlo error in every row and is omitted.}
\label{tab:e1}
\centering
\begin{tabular}{llcccccc}
\toprule
& & \multicolumn{2}{c}{$m_2$}
  & \multicolumn{2}{c}{$\nu(\{0\})$} & \multicolumn{2}{c}{$\mu_Q(\{0\})$} \\
\cmidrule(lr){3-4}\cmidrule(lr){5-6}\cmidrule(lr){7-8}
Profile & Scheme & empirical & theory & empirical & theory
        & empirical & theory \\
\midrule
uniform & pre         & $1.8039\pm0.0012$ & $1.800$
                      & $0$ & $0$ & $0$ & $0$ \\
uniform & multinomial & $2.6118\pm0.0074$ & $2.600$
                      & $0.36805\pm0.00195$ & $0.367879$
                      & $0.21006\pm0.00244$ & $0.209849$ \\
uniform & residual    & $1.8039\pm0.0012$ & $1.800$
                      & $0$ & $0$ & $0$ & $0$ \\
\addlinespace[2pt]
two-point & pre         & $2.0011\pm0.0019$ & $2.000$
                        & $0$ & $0$ & $0$ & $0$ \\
two-point & multinomial & $2.7985\pm0.0103$ & $2.800$
                        & $0.41215\pm0.00230$ & $0.414830$
                        & $0.26519\pm0.00288$ & $0.268538$ \\
two-point & residual    & $2.4026\pm0.0057$ & $2.400$
                        & $0.30580\pm0.00148$ & $0.303265$
                        & $0.13225\pm0.00184$ & $0.129081$ \\
\bottomrule
\end{tabular}
\end{table*}

\emph{Results.} Every pre-registered quantity agrees to within $2\sigma$, and
Table~\ref{tab:e1} reports the full comparison. Two features of the comparison
are not apparent from the tabulated values alone. First, under exactly uniform
weights the residual row does not merely agree with the pre-resampling row to
within the reported error: it reproduces that row exactly, since the realized
matrix is the same, as required by Corollary~\ref{coro:uniform-contrast} at
every finite $N$ and not only in the limit. Second, the Stieltjes
discrepancies at $h=0.05$ have grid means between $0.0019$ and $0.0040$ and
maxima of at most $0.0302$, localized in the left boundary layer; halving the
bandwidth raises the maxima to at most $0.0428$ without altering any
conclusion, which identifies the residual misfit as a boundary-layer effect
rather than a bulk one.

Under the two-point profile the three schemes separate as predicted, with
second moments $2.000$, $2.800$ and $2.400$. Under exactly uniform weights,
multinomial resampling nevertheless produces a zero atom of mass $0.21$. This
distortion is not a small-sample artifact: it is the value predicted by
$\mathcal{T}_c(\operatorname{Poi}(1))$.

The same framework predicts the rank-and-spectral-gap transition at
$c=p_{\mathsf{s}}$ and the conservativeness of the Marchenko--Pastur benchmark
edge, which costs a factor of approximately $1.3$ in the gap it certifies;
both are verified over a twelve-point sweep in $c$ and are reported in the
supplementary material~(S.24).

\subsection{Identical effective sample size with different null spaces}
\label{sec:ix-e3}

Proposition~\ref{prop:ess} states that ESS determines the limiting second
spectral moment given $c$ and $H$, but does not determine the spectrum. The
present experiment quantifies the resulting gap, which is of practical
relevance because ESS is the standard scalar diagnostic for resampling.

\emph{Design.} The pair
\begin{equation}
  \rho^{(A)} = \tfrac12\delta_{1/4}+\tfrac12\delta_{7/4},
  \qquad
  \rho^{(B)} = \tfrac{9}{10}\delta_{3/4}+\tfrac{1}{10}\delta_{13/4}
  \label{eq:e3pair}
\end{equation}
of Proposition~\ref{prop:ess} shares mean $1$ and second moment $25/16$, and
hence shares the value $\mathrm{ESS}=16N/25$ whenever $N$ is a multiple of
$10$, while having distinct count third moments $8.375$ and $9.5$. The
configuration uses multinomial resampling with $c=0.8$, $N=1000$ and
$n_{\mathrm{rep}}=30$. Four quantities are reported: ESS and the predicted
$m_2$, which coincide by construction; the count zero masses; and the spectral
atoms. The count third moments enter through $m_3=1+3cb_2+c^2b_3$.

\begin{figure}[!t]
  \centering
  \includegraphics{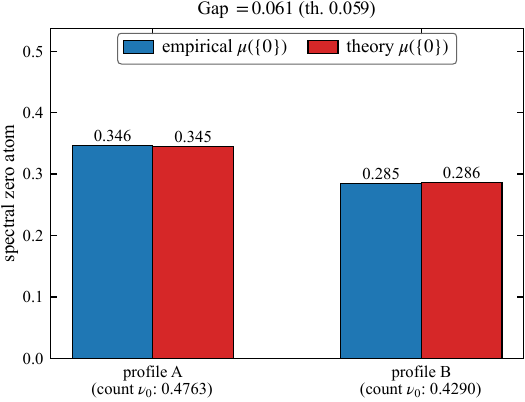}
  \caption{Two weight profiles with exactly equal ESS, of value $640$, and
    statistically indistinguishable second spectral moments, but
    macroscopically different null spaces, under multinomial resampling with
    $c=0.8$, $N=1000$ and $n_{\mathrm{rep}}=30$. The spectral atoms differ by
    $0.061$ against a pre-registered prediction of $0.059$, a separation of
    nineteen standard errors. Any diagnostic based on ESS reports the two
    configurations as identical.}
  \label{fig:e3}
\end{figure}

\begin{table}[!t]
\renewcommand{\arraystretch}{1.25}
\caption{Experiment E3: identical ESS with different null spaces. Empirical
  values are means over $n_{\mathrm{rep}}=30$ repetitions, reported with one
  standard error. ESS is verified as an exact identity and is not estimated.}
\label{tab:e3}
\centering
\begin{tabular}{lcc}
\toprule
Quantity & Profile $A$ & Profile $B$ \\
\midrule
$\mathrm{ESS}$ (exact)   & $640$ & $640$ \\
$m_2$ empirical          & $3.0559\pm0.0101$ & $3.0397\pm0.0161$ \\
$m_2$ theory             & $3.05$ & $3.05$ \\
$\nu(\{0\})$ empirical   & $0.47717\pm0.00143$ & $0.42823\pm0.00215$ \\
$\nu(\{0\})$ theory      & $0.476287$ & $0.429007$ \\
$\mu_Q(\{0\})$ empirical & $0.34646\pm0.00179$ & $0.28529\pm0.00268$ \\
$\mu_Q(\{0\})$ theory    & $0.345359$ & $0.286259$ \\
$m_3$ empirical          & $12.576\pm0.097$ & $13.171\pm0.180$ \\
$m_3$ theory             & $12.510$ & $13.230$ \\
\bottomrule
\end{tabular}
\end{table}

\emph{Results.} ESS equals $640$ on both sides, verified as an identity
rather than estimated, and the second spectral moments are statistically
indistinguishable, with $|\Delta|=0.0162$ against a joint $2\sigma$ of
$0.0381$. The null spaces are not indistinguishable: the spectral atoms
separate by $0.0612$ against a predicted $0.0591$, or nineteen standard errors,
and the count third moments corroborate the mechanism.
Table~\ref{tab:e3} reports the full comparison.

Two weight configurations that any diagnostic based on ESS reports as identical
therefore differ by six percentage points of the ambient dimension in the
fraction of directions annihilated by a single resampling step. The quantity
involved is finite and directly measurable rather than asymptotic.

\subsection{Residual randomization at the integer boundary}
\label{sec:ix-e4}

\emph{Design.} The three constructions of Theorem~\ref{thm:classification} are
used, all satisfying $\rho_N\Rightarrow\delta_1$: exactly uniform weights,
giving $\alpha=0$; the interior family at $\alpha=0.4$, in which a fraction
$\alpha_N\to\alpha$ of particles is placed at $1-\varepsilon_N$ and the
remainder at $1+\frac{\alpha_N}{1-\alpha_N}\varepsilon_N$, so that
\begin{equation}
  \frac1N\sum_i \lambda_i^2
    = 1 + \frac{\alpha_N}{1-\alpha_N}\varepsilon_N^2,
  \qquad
  \frac{R_N}{N} = \alpha_N;
  \label{eq:e4interior}
\end{equation}
and the bounded two-level endpoint construction $\lambda_i=1-\frac1N$ for
$i<N$ with $\lambda_N=2-\frac1N$ and $N=2^m$, giving $\alpha=1$. Each
construction is run at $N=2^8,\dots,2^{14}$. The reported quantities are
count-level and therefore inexpensive to evaluate: $R_N/N$, the zero-count
fraction, and the second count moment, each compared against the predictions
$\alpha$, $\alpha e^{-1}$ and $1+\alpha$ of
Theorem~\ref{thm:classification} and Proposition~\ref{prop:res-cost}, and
displayed alongside the second-order weight deviation
$\frac1N\sum_i(\lambda_i-1)^2$.

\begin{figure*}[!t]
  \centering
  \subfloat[Second-order weight deviation]{%
    \includegraphics{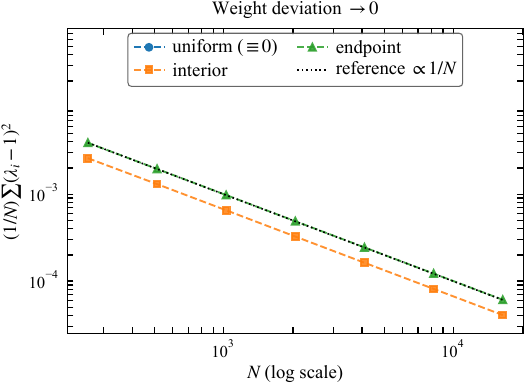}\label{fig:e4a}}\hfil
  \subfloat[Residual randomization fraction]{%
    \includegraphics{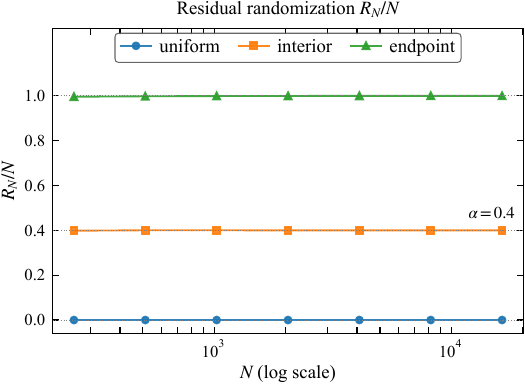}\label{fig:e4b}}
  \caption{Vanishing weight heterogeneity does not imply vanishing residual
    randomization, in accordance with Theorem~\ref{thm:classification}.
    \protect\subref{fig:e4a} The second-order deviation
    $\frac1N\sum_i(\lambda_i-1)^2$ vanishes at rate $1/N$ in all three
    constructions, with the dotted line giving the reference slope. The uniform
    construction is omitted from this panel and marked as identically zero in
    the legend, since its deviation vanishes exactly at every $N$ and cannot be
    displayed on a logarithmic axis.
    \protect\subref{fig:e4b} Over the same range, $R_N/N$ converges to the
    three distinct limits $0$, $0.4$ and $1$. The two panels are to be read
    together: the simultaneous vanishing of weight heterogeneity and
    persistence of randomization constitutes the content of
    Theorem~\ref{thm:classification}. Since the endpoint construction keeps
    all weights below $2$, the effect originates in proximity to the integer
    grid rather than in heavy tails.}
  \label{fig:e4}
\end{figure*}

\begin{table*}[!t]
\renewcommand{\arraystretch}{1.25}
\caption{Experiment E4: count-level quantities for the three constructions of
  Theorem~\ref{thm:classification}, at the endpoints of the sweep; the full
  sweep $N=2^8,\dots,2^{14}$ is tabulated in the supplementary
  material~(S.26). For
  the uniform construction the counts are
  deterministic, so that all entries are exact and carry no Monte Carlo
  error.}
\label{tab:e4}
\centering
\begin{tabular}{llcccc}
\toprule
Construction & $N$ & $R_N/N$ & zero fraction & $b_{2,N}$
             & $\frac1N\sum(\lambda_i-1)^2$ \\
\midrule
uniform   & all      & $0$ & $0$ & $1$ & $0$ \\
          & (theory) & $0$ & $0$ & $1$ & --- \\
\addlinespace[2pt]
interior  & $256$    & $0.3984$ & $0.15408\pm0.00089$
                     & $1.3963\pm0.0037$ & $2.59\times10^{-3}$ \\
          & $16384$  & $0.4000$ & $0.14835\pm0.00012$
                     & $1.3998\pm0.0005$ & $4.07\times10^{-5}$ \\
          & (theory) & $0.4$    & $0.147152$ & $1.4$ & $\to0$ \\
\addlinespace[2pt]
endpoint  & $256$    & $0.9961$ & $0.36859\pm0.00135$
                     & $2.0034\pm0.0062$ & $3.89\times10^{-3}$ \\
          & $16384$  & $0.9999$ & $0.36800\pm0.00018$
                     & $2.0007\pm0.0008$ & $6.10\times10^{-5}$ \\
          & (theory) & $1$      & $0.367879$ & $2$ & $\to0$ \\
\bottomrule
\end{tabular}
\end{table*}

\emph{Results.} The second-order weight deviation vanishes at rate $1/N$ in
all three constructions: for the interior family it decreases from
$2.59\times10^{-3}$ at $N=256$ to $4.07\times10^{-5}$ at $N=16384$, a factor of
$64$ across a factor of $64$ in $N$. Over the same range $R_N/N$ converges to
the three distinct limits $0$, $0.400$ and $1$, and Table~\ref{tab:e4} reports
the count-level quantities against their predictions $\alpha$, $\alpha e^{-1}$
and $1+\alpha$.

Weight heterogeneity, as measured by any moment of $\rho_N-\delta_1$, may
therefore vanish while residual resampling retains the full randomization of
the multinomial scheme. Since the endpoint construction keeps all weights below
$2$, the effect is governed by the position of the weights relative to the
integer grid and not by heavy tails, and near-uniformity of the weights does
not permit the inference that residual resampling is nearly deterministic.

\subsection{The residual atom-free window}
\label{sec:ix-e4spec}

\emph{Design.} A single spectral configuration of moderate size is used, with
$N=1024$, $d=819$ so that $c=0.7998$, and the interior construction at
$\alpha=0.5$. The pre-resampling, multinomial and residual spectra are
compared against their $\mathcal{T}_c$ predictions. The value $\alpha=0.5$ is
used here in place of the value $\alpha=0.4$ of Fig.~\ref{fig:e4} because it
places $c$ within the window of Corollary~\ref{coro:rank-order} with a wider
margin; the two values yield the same qualitative conclusion.

\emph{Prediction.} At $\alpha=0.5$ the residual count law is
$\tfrac12\delta_1+\tfrac12\operatorname{Poi}(1)$, so that
\begin{equation}
  p_{\mathrm{res}} = 1-\frac{\alpha}{e} = 0.8161,
  \qquad
  p_{\mathrm{multi}} = 1-e^{-1} = 0.6321 .
  \label{eq:e4window}
\end{equation}
With $c=0.7998$ the configuration lies within the window
$c\in(p_{\mathrm{multi}},p_{\mathrm{res}})$ of
Corollary~\ref{coro:rank-order}. The multinomial spectrum must therefore carry
an atom of mass $1-p_{\mathrm{multi}}/c=0.210$, the residual spectrum must
carry none, and the pre-resampling spectrum is $\mathrm{MP}(c)$ without an
atom, since $c<1$.

\begin{figure*}[!t]
  \centering
  \subfloat[pre-resampling]{%
    \includegraphics{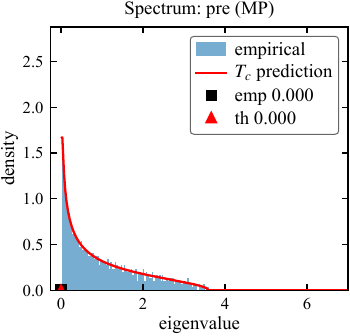}\label{fig:e4sa}}\hfil
  \subfloat[multinomial]{%
    \includegraphics{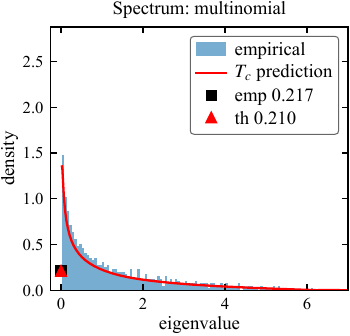}\label{fig:e4sb}}\hfil
  \subfloat[residual]{%
    \includegraphics{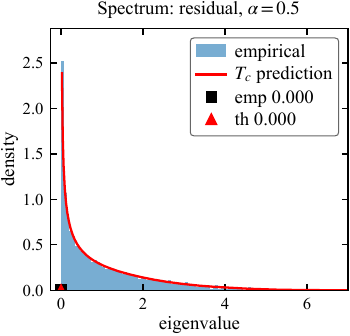}\label{fig:e4sc}}
  \caption{The atom-free window of Corollary~\ref{coro:rank-order}, at
    $N=1024$, $d=819$ and $c=0.7998$, with the interior construction at
    $\alpha=0.5$. The value of $\alpha$ differs from that of
    Fig.~\ref{fig:e4}, where $\alpha=0.4$; the larger value places $c$ near
    the centre of the window
    $(p_{\mathrm{multi}},p_{\mathrm{res}})=(0.632,0.816)$. All three panels
    share both axes. \protect\subref{fig:e4sa} The pre-resampling spectrum
    recovers $\mathrm{MP}(c)$. \protect\subref{fig:e4sb} Multinomial
    resampling carries a zero atom of empirical mass $0.217$ against the
    predicted $0.210$. \protect\subref{fig:e4sc} Residual resampling carries
    no atom: within this window it retains full rank with probability tending
    to one, whereas multinomial resampling has already annihilated a fixed
    fraction of directions.}
  \label{fig:e4spec}
\end{figure*}

\emph{Results.} The three panels realize the prediction: the multinomial
spectrum carries an atom of empirical mass $0.217$ against the predicted
$0.210$, the residual spectrum carries none, and the pre-resampling spectrum
recovers $\mathrm{MP}(c)$. Within this window the two schemes differ
qualitatively rather than quantitatively, since multinomial resampling has
already eliminated a fixed fraction of directions while residual resampling
retains full rank with probability tending to one. For $\alpha=0.5$ the window
spans $c\in(0.632,0.816)$, a range that includes many practical ratios of
particle number to dimension.

\subsection{The finite-sample moment identity}
\label{sec:ix-t1}

\emph{Design.} Sections~\ref{sec:lsd}--\ref{sec:res} are asymptotic, whereas
Proposition~\ref{prop:cost} and Corollary~\ref{coro:frob-order} are not, and
are therefore tested separately at a single fixed configuration in which no
limit is taken. One fixed pair $(X,w)$ with $d=150$ and $N=300$, for which the
realized $R_N=124$, is held constant while only the resampling randomness is
redrawn, over $n_{\mathrm{rep}}=1000$ repetitions, for the multinomial and
residual schemes. The residual count covariance is
$\operatorname{Cov}(\bm{n})=\operatorname{diag}(u)-uu^\top/R_N$, with the
branch $R_N=0$ defined to be zero. The cost is evaluated as
\begin{equation}
  \operatorname{tr} S_r^2
    = \frac{1}{N^2}\,\bm{n}^\top (G\circ G)\,\bm{n},
  \label{eq:t1form}
\end{equation}
which requires no eigendecomposition and therefore tests the identity in the
form in which it is proved.

\emph{Results.} At this configuration the standardized deviations are
$1.948$ for multinomial resampling, with Monte Carlo mean $74.114\pm0.520$
against a theoretical cost of $75.128$, and $0.123$ for residual resampling,
with $31.299\pm0.248$ against $31.329$; both lie within the pre-registered
criterion $|\cdot|<3$, so the residual covariance formula is confirmed in the
exact form used in the proof. The covariance difference of
Corollary~\ref{coro:frob-order} is positive semidefinite to machine precision,
with minimum eigenvalue $-2.7\times10^{-15}$, and since $G\circ G\succeq0$ the
finite-sample Frobenius ordering between the two schemes follows. The statement
is conditional and distribution-free, and requires none of the asymptotic
assumptions.

\subsection{Robustness checks}
\label{sec:ix-appendix}

Three checks are summarized here and reported in full in the supplementary
material~(S.27). Repeating the E1 measurements at $N=500$ and $N=2000$ shows every
quantity converging toward its prediction with diminishing error bars, the
uniform-profile count zero mass moving from $0.3646\pm0.0051$ to
$0.3701\pm0.0020$ against $e^{-1}$. Replacing Gaussian entries by variance-one
Rademacher and Student-$t_8$ entries leaves the second spectral moments at
$2.821\pm0.017$ and $2.821\pm0.021$, against the finite-$N$ predictions $2.798$
and $2.801$ of Proposition~\ref{prop:popcost}, which depend on the entry
kurtosis $\kappa_4$; the deviations are $1.3\sigma$ and $0.9\sigma$. This is an
agreement report for a single configuration and is not offered as evidence of
universality, and no universality claim is made in this paper. Finally, for
$H=\tfrac12\delta_{1/2}+\tfrac12\delta_{3/2}$ the separable prediction
$\mu_S=H\boxtimes\mathcal{T}_c(\operatorname{Poi}(1))$ of
Theorem~\ref{thm:lsd-gen} is verified end to end, with post-resampling
$m_2=2.856$ against $2.85$, a zero atom of $0.2079$ against $0.209849$, and the
$\boxtimes$ pipeline matching a directly simulated Wishart spectrum to a
maximum Stieltjes deviation of $5.1\times10^{-4}$.


\section{Discussion and Limitations}
\label{sec:limits}

The results above concern four distinct objects, which should not be
conflated\cite{douc2005comparison}: the rank and zero-atom transition, described by $\mu(\{0\})$ with
threshold $p_{\mathsf{s}}$; the universal Marchenko--Pastur lower-edge
benchmark $|\sqrt c-\sqrt{p_{\mathsf{s}}}|^2$; the regular lower and internal
spectral edges, characterized by a branch-restricted critical equation; and
two notions of conditioning, namely the raw pseudo-condition number
$\kappa^+(S)$ and the bulk quantile condition number
\begin{equation}
  \kappa_{\alpha,\beta}^{\mathrm{bulk}}
    :=Q_{\mu^+}(\beta)/Q_{\mu^+}(\alpha),\qquad 0<\alpha<\beta<1,
  \label{eq:bulkcond}
\end{equation}
where $\mu^+:=\mu|_{(0,\infty)}/(1-\mu(\{0\}))$.

The scope of the two conditioning notions depends on the mechanism. For
multinomial resampling in the identity-population model the limiting right
support is unbounded by Proposition~\ref{prop:edges} and
$\kappa^+(S_{r,N})\to\infty$ in probability, since
$\lambda_{\max}(S_{r,N})\to\infty$ while
$\lambda_{\min}^+(S_{r,N})=O_{\mathbb P}(1)$. Fixed-quantile ratios therefore
describe bulk conditioning more robustly than the raw pseudo-condition number,
and a positive lower gap does not imply asymptotic well-conditioning. Residual
limits may instead have bounded support, as in the exact-uniform-weight case,
where $\kappa^+$ has a finite limit for $c\neq1$. The spectral cost considered
in this paper refers to these specific observables and not to generic
performance metrics: given $c$ and $H$, ESS determines the multinomial
limiting second spectral moment and bounds occupancy from below, but does not
determine the full limiting spectral distribution.

Five limitations delimit the scope of the results.

First, the analysis treats a single resampling step with exogenous weights.
Endogenous, likelihood-driven weights generally invalidate the independence
assumptions used in the covariance-spectrum step, so the spectral-map argument
does not extend automatically to that setting. Conditionally on the realized
weights the multinomial sampling law is unchanged, so suitable count-law
limits may survive under corresponding random-weight assumptions.

Second, the limit theorems cover multinomial resampling under (A4) and
residual resampling under (A4R). Establishing count-law limits for stratified
and systematic resampling requires additional analysis of ordering and
dependence and lies outside the scope of this paper; a random-ordering
assumption may prove useful, but is not claimed to be sufficient. These two
schemes accordingly do not appear in the numerical study.

Third, centering is not performed. It perturbs each matrix by a term of rank
at most one and therefore does not affect the limiting spectral distribution
as $d\to\infty$ whenever that distribution exists, but the finite-sample
conditional inequalities are sensitive to it.

Fourth, Theorem~\ref{thm:lsd-gen} is stated under Gaussianity alone, and no
universality claim is made; the entry-distribution check of
Section~\ref{sec:ix-appendix} covers a single configuration and is not a
universality result.

Fifth, the results establish no ordering of filtering accuracy, closed-loop
performance, or optimal resampling schedules: the experiments verify the
objects that the theory defines and no others. The critical-point readings in
Fig.~\ref{fig:e4spec} are finite-$N$ measurements taken near a transition, and
are reported together with their finite-size band rather than smoothed. The
spectral recursion of a complete filtering cycle remains for future work.

\section{Conclusion}
\label{sec:conclusion}

Resampling mechanisms admit a classification by their count-law limits.
Multinomial resampling is determined by the weight law $\rho$ alone, whereas
residual resampling retains information about the approach to the integer
boundaries through the joint law $\pi$, classified completely under
$\rho=\delta_1$ as the family
$(1-\alpha)\delta_1+\alpha\operatorname{Poi}(1)$. Vanishing weight
heterogeneity therefore does not imply vanishing residual randomization.

Composed with a classical covariance-side spectral map, these count laws yield
exact spectral consequences. For multinomial resampling in the
identity-population model they yield a rank-and-spectral-gap transition with
critical support touching, computable regular spectral edges, unbounded right
support with diverging pseudo-condition number, and an exact second-moment
cost. For residual resampling they yield the smaller cost
$c\,a_1^2\,\mathbb{E}_\pi U$, an asymptotic rank-preservation ordering, and a
complete one-parameter classification of the limits under vanishing
heterogeneity. Numerical experiments against pre-registered predictions
confirm each level of the theory.

These results support the use of spectral diagnostics alongside ESS in the
assessment of resampling for high-dimensional ensembles. Their incorporation
into adaptive resampling rules for likelihood-driven particle filters remains
an important direction for future work.

\clearpage
\onecolumn
\begin{center}
{\Large\bfseries Supplementary Material}\\[1.2ex]
{\large Proofs for `‘Spectral Costs of Multinomial and Residual Resampling
in High-Dimensional Particle Ensembles: A Free-Probability Analysis`’}\\[1.2ex]
{\normalsize SONG Deming}
\end{center}
\vspace{1.2em}

This supplement contains complete proofs of all results stated in the main text (Sections~S.0--S.22, organized by logical dependency; each section header names the main-text result it proves), followed by the solver unit tests and the full numerical tables referenced from the main text (Sections~S.23--S.27).

\setcounter{section}{-1}
\renewcommand{\thesection}{S.\arabic{section}}
\renewcommand{\theHsection}{S.\arabic{section}}


\section{Tool Inventory}\label{sec:S0}

\textbf{Tool S.1 (Rank inequality; \cite[Lemma~2.2]{bai2010spectral}).} For Hermitian $d\times d$ matrices $A,B$, $\|F^A-F^B\|_\infty\le\operatorname{rank}(A-B)/d$.

\textbf{Tool S.2 (Trace convexity chain; \cite{davis1957schwarz,fan1949theorem,karamata1932inegalite,marshall1979inequalities,bhatia1997matrix}).} Ky Fan majorization; Karamata; Davis: $\varphi$ convex $\Rightarrow$ $M\mapsto\operatorname{tr}\varphi(M)$ convex.

\textbf{Tool S.3 (Extreme eigenvalues of sample covariance matrices; \cite{bai1993limit}).} Let $Z\in\mathbb R^{d\times M}$ have i.i.d.\ entries drawn from a fixed distribution with mean $0$, variance $1$, finite fourth moment, and $d/M\to y\in(0,\infty)$. Then the extreme eigenvalues of $\frac1MZZ^\top$ converge in probability to the edges of $\mathrm{MP}(y)$: $\lambda_{\max}\to(1+\sqrt y)^2$; for $y<1$, $\lambda_{\min}\to(1-\sqrt y)^2$. For $y>1$, the Gram form gives directly: $\lambda_{\min}\big(\frac1MZ^\top Z\big)\to(\sqrt y-1)^2>0$ in probability—implying simultaneously that $Z$ has full column rank with probability tending to one and that the smallest positive eigenvalue of the sample covariance converges.

\section{Proof of Lemma~\ref{lem:unbiased}}\label{sec:S1}

$\mathbb E[S_{r,N}\mid X_N,w]=\frac1N\sum_i\mathbb E[n_i\mid X_N,w]\,x_ix_i^\top=\sum_iw_ix_ix_i^\top=S_{w,N}$, by linearity only. $\square$

\section{Proof of Theorem~\ref{thm:jensen}}\label{sec:S2}

The count vector of any scheme takes finitely many values $\boldsymbol n^{(1)},\dots,\boldsymbol n^{(L)}$ with probabilities $p_\ell$. By Lemma~\ref{lem:unbiased}, $\sum_\ell p_\ell S(\boldsymbol n^{(\ell)})=S_{w,N}$, and ordinary finite Jensen gives
\[
\mathbb E\big[F(S_{r,N})\,\big|\,X_N,w\big]=\sum_{\ell}p_\ell F\big(S(\boldsymbol n^{(\ell)})\big)\ \ge\ F(S_{w,N}).
\]
For strictly convex $F$, equality forces $S(\boldsymbol n^{(\ell)})$ identical across the support, i.e.\ $S_{r,N}=S_{w,N}$ a.s.

\textbf{Admissible functionals (paragraph following Theorem~\ref{thm:jensen} in the main text).} (a) Tool S.2 (the Davis trace-convexity step); for $q>2$, $x\mapsto x^q$ is not operator convex on $(0,\infty)$, whereas trace convexity still applies. (b) $\lambda_{\max}(M)=\sup_{\|v\|=1}v^\top Mv$, a supremum of linear functions. (c) $-\log(x+\varepsilon)$ is convex on $[0,\infty)$; the conclusion reads $\mathbb E\log\det(S_{r,N}+\varepsilon I)\le\log\det(S_{w,N}+\varepsilon I)$, with the convention $0\log0=0$ throughout. $\square$

\section{Proof of Proposition~\ref{prop:cost}, Proposition~\ref{prop:popcost}, and Corollary~\ref{coro:frob-order}}\label{sec:S3}

\textbf{Proposition~\ref{prop:cost}.} With $\delta_i=n_i/N-w_i$, $\Delta:=S_{r,N}-S_{w,N}=\sum_i\delta_ix_ix_i^\top$:
\[
\operatorname{tr}S_{r,N}^2-\operatorname{tr}S_{w,N}^2=2\operatorname{tr}(S_{w,N}\Delta)+\operatorname{tr}\Delta^2,\qquad \operatorname{tr}\Delta^2=\|\Delta\|_F^2.
\]
The linear term vanishes in conditional mean (Lemma~\ref{lem:unbiased}), so
\[
\mathbb E\big[\operatorname{tr}S_{r,N}^2-\operatorname{tr}S_{w,N}^2\,\big|\,X_N,w\big]=\mathbb E\big[\|\Delta\|_F^2\,\big|\,X_N,w\big]=\sum_{i,j}\mathbb E[\delta_i\delta_j]G_{ij}^2=\frac1{N^2}\operatorname{tr}\big[(G\circ G)\,\operatorname{Cov}(\boldsymbol n\mid X_N,w)\big],
\]
nonnegative since $G\circ G$ and the covariance are PSD (Schur product theorem).

\textbf{Multinomial specialization.} With $\mathbb E[n_i^2]=Nw_i(1-w_i)+N^2w_i^2$ and $\mathbb E[n_in_j]=N(N-1)w_iw_j$ ($i\neq j$), substitution gives $\frac1N[\sum_iw_i\|x_i\|^4-\operatorname{tr}S_{w,N}^2]$. For the equivalent geometric form, set $Y_i:=x_ix_i^\top$: since $\sum_iw_i=1$ and $\sum_iw_iY_i=S_{w,N}$,
\[
\sum_iw_i\big\|Y_i-S_{w,N}\big\|_F^2=\sum_iw_i\|Y_i\|_F^2-\Big\|\sum_iw_iY_i\Big\|_F^2,
\]
so the cost equals $\frac1N\sum_iw_i\|x_ix_i^\top-S_{w,N}\|_F^2$, which vanishes iff $x_ix_i^\top=S_{w,N}$ for every $i$ with $w_i>0$ (covering all-zero particles and the $\pm v$ collinear case; in the latter, $S_{r,N}=\frac1N(\sum_in_i)vv^\top=S_{w,N}$ a.s.\ even with random counts). $\square$

\textbf{Proposition~\ref{prop:popcost}.} For $x_i=\Sigma^{1/2}z_i$ with kurtosis $\kappa_4$:
\[
\mathbb E\|x_i\|^4=(\operatorname{tr}\Sigma)^2+2\operatorname{tr}\Sigma^2+\kappa_4\sum_j\Sigma_{jj}^2,\qquad \mathbb EG_{ij}^2=\operatorname{tr}\Sigma^2\ \ (i\neq j).
\]
Substituting into the multinomial formula and taking the expectation over the particle matrix (weights deterministic or independent):
\[
\mathbb E\big[\operatorname{tr}S_{r,N}^2-\operatorname{tr}S_{w,N}^2\,\big|\,w\big]=\frac{1-s_{2,N}}{N}\Big[(\operatorname{tr}\Sigma_N)^2+\operatorname{tr}\Sigma_N^2+\kappa_4\sum_j(\Sigma_N)_{jj}^2\Big],
\]
exactly. For the asymptotics, $\frac1d\operatorname{tr}\Sigma_N^2\le\|\Sigma_N\|_{\rm op}\frac{\operatorname{tr}\Sigma_N}{d}=O(1)$ (equivalently $\le\|\Sigma_N\|_{\rm op}^2$) and $\frac1d\sum_j(\Sigma_{N,jj})^2\le\|\Sigma_N\|_{\rm op}^2=O(1)$, both under (A5) at finite $N$ (no limit quantities); hence
\[
\frac1d\mathbb E[\mathrm{cost}\mid w]=c_N\Big(\frac{\operatorname{tr}\Sigma_N}{d}\Big)^2(1-s_{2,N})+O(N^{-1})=c\,a_1^2(1-s_{2,N})+o(1),
\]
where no convergence rate is assumed for $c_N$ or the population first moment, so the total error is only $o(1)$. $\square$

\textbf{Corollary~\ref{coro:frob-order} (finite-sample Frobenius ordering).}
Write $\lambda=k+u$ with $L=\sum_ik_i$, $R=\sum_iu_i$, $L,R>0$, and note
$L+R=N$. In these coordinates
$\operatorname{Cov}_{\rm multi}=\operatorname{diag}(\lambda)-\frac1N\lambda\lambda^\top$
and
$\operatorname{Cov}_{\rm res}=\operatorname{diag}(u)-\frac1R uu^\top$,
so, using $\operatorname{diag}(\lambda)-\operatorname{diag}(u)=\operatorname{diag}(k)$,
\[
\operatorname{Cov}_{\rm multi}-\operatorname{Cov}_{\rm res}
=\operatorname{diag}(k)-\tfrac1N\lambda\lambda^\top+\tfrac1R uu^\top.
\]
Expanding the claimed rank-one term with $LR/N$ and using $N=L+R$,
\[
\tfrac{LR}{N}\big(\tfrac kL-\tfrac uR\big)\big(\tfrac kL-\tfrac uR\big)^\top
=\tfrac{R}{LN}kk^\top-\tfrac1N\big(ku^\top+uk^\top\big)+\tfrac{L}{RN}uu^\top,
\]
and the $kk^\top$, $ku^\top+uk^\top$, $uu^\top$ coefficients of
$\operatorname{diag}(k)-\frac1N\lambda\lambda^\top+\frac1Ruu^\top$
minus $\operatorname{diag}(k)$ are respectively $-\frac1N$, $-\frac1N$,
$-\frac1N+\frac1R$, while those of
$-\frac1L kk^\top+\frac{LR}{N}(\cdots)$ are
$-\frac1L+\frac{R}{LN}=-\frac1N$, $-\frac1N$, $\frac{L}{RN}=\frac1R-\frac1N$;
the two sides agree, giving the stated identity. The first term
$\operatorname{diag}(k)-kk^\top/L$ is the covariance of a
$\mathrm{Multinomial}(1;k/L)$ draw, hence positive semidefinite, and the
second term is an explicit positive semidefinite rank-one matrix, so the
difference is positive semidefinite. The boundary cases are immediate: if
$R=0$ then $\operatorname{Cov}_{\rm res}=0$; if $L=0$ then $N=R$ and the two
covariances coincide. The ordering of the conditional expected error energies
follows from the quadratic form of Proposition~\ref{prop:cost}, since
$G\circ G\succeq0$ (Schur product theorem) and
$A\succeq B\succeq0$ implies $\operatorname{tr}[MA]\ge\operatorname{tr}[MB]$
for $M\succeq0$. $\square$

\section{Proof of Theorem~\ref{thm:multi-count} (Multinomial count law)}\label{sec:S4}

\textbf{Bounded test functions.} Represent the counts by independent ancestor indices $J_1,\dots,J_N$ with $\mathbb P(J_a=i)=w_i$. Step 1 (expectation): the total-variation bound for the Poisson approximation is
\[
d_{\rm TV}\big(\operatorname{Bin}(N,\lambda/N),\ \operatorname{Poi}(\lambda)\big)\ \le\ \frac{\lambda^2}{N},
\]
which is $O(M^2/N)$ uniformly on $\lambda\in[0,M]$; the tail above $M$ is controlled by (A4) via $\rho_N((M,\infty))\le M^{-2}\int\lambda^2\rho_N\to M^{-2}r_2$. Hence $\frac1N\sum_i\mathbb E f(n_i)\to\int\mathbb E f(\operatorname{Poi}(\lambda))\rho(d\lambda)$. Step 2 (concentration): replacing one ancestor label changes two counts, so $\frac1N\sum_if(n_i)$ changes by at most $4\|f\|_\infty/N$ (equivalently $2\operatorname{osc}(f)/N$); bounded differences give variance $O(N^{-1})$. (If weights are random, the computations above are conditional on $w$.)

\textbf{Second moment (factorial-moment form).} Let $B_N:=\frac1N\sum_in_i(n_i-1)$, so $\frac1N\sum_in_i^2=1+B_N$. Conditionally on $w$,
\[
\mathbb E[B_N\mid w]=(N-1)\sum_iw_i^2=\Big(1-\frac1N\Big)\int\lambda^2\rho_N(d\lambda),
\]
and expanding the collision indicators gives
\[
\operatorname{Var}(B_N\mid w)\ \le\ 2\sum_iw_i^2+4N\sum_iw_i^3\ \longrightarrow\ 0,
\]
since $N\sum_iw_i^3\le(\max_iw_i)\,N\sum_iw_i^2\to0$ under (A4) (no third-moment assumption on the limiting weight law is needed). Hence $\frac1N\sum_in_i^2\to1+r_2=b_2$ in probability. $\square$

\section{The Poissonization Coupling Lemma}\label{sec:S5}

This lemma is stated and used in the main text in the unnumbered paragraphs following Theorem~\ref{thm:multi-count} and Theorem~\ref{thm:lsd-id}; it is stated formally and proved here.

\textbf{Distributional identity.} For independent $n_i'\sim\operatorname{Poi}(\mu_i)$ with $R:=\sum_i\mu_i$ a positive integer and $T=\sum_in_i'$, for any $k$ with $\sum k_i=R$,
\[
\mathbb P(\boldsymbol n'=k\mid T=R)=\frac{\prod_ie^{-\mu_i}\mu_i^{k_i}/k_i!}{e^{-R}R^R/R!}=\frac{R!}{\prod_ik_i!}\prod_i\Big(\frac{\mu_i}{R}\Big)^{k_i}=\mathrm{Multinomial}(R;\mu/R)\{k\}.
\]
\textbf{Coupling (label form).} Let $T\sim\operatorname{Poi}(R)$ independently of an infinite i.i.d.\ label sequence $J_1,J_2,\dots$ with $\mathbb P(J_a=i)=\mu_i/R$. Define $n_i'$ as the number of occurrences of $i$ among the first $T$ labels, and $n_i$ among the first $R$. Then $(n_i')$ are independent $\operatorname{Poi}(\mu_i)$, $(n_i)\sim\mathrm{Multinomial}(R;\mu/R)$, and $\sum_i|n_i-n_i'|=|T-R|$ exactly (the two count vectors differ only in the last $|T-R|$ labels). The matrix difference is a signed sum of $|T-R|$ rank-one terms, so $\operatorname{rank}(S_N-S_N')\le|T-R|$; Tool S.1 gives $\|F^{S_N}-F^{S_N'}\|_\infty\le|T-R|/d$, which tends to zero in probability provided $\sqrt R/d\to0$ (satisfied here since $R\le N$ and $d/N\to c>0$). This coupling controls weak spectral convergence; it does not by itself control second spectral moments. $\square$

\section{Proof of Theorem~\ref{thm:lsd-id} (LSD, identity population)}\label{sec:S6}

\textbf{Step 1 (Poissonization).} The coupling lemma of Section~\ref{sec:S5} + Tool S.1: multinomial and independent-Poisson models share the same LSD limit; assume $D_N=\operatorname{diag}(n_i)$ with independent $\operatorname{Poi}(\lambda_i)$ entries.

\textbf{Step 2 (truncation, in-probability two-limit scheme).} Fix $M$ and set $D_N^{(M)}:=\operatorname{diag}(n_i\wedge M)$. Then
\[
\operatorname{rank}\Big(\frac1N Z(D_N-D_N^{(M)})Z^\top\Big)\le\#\{i:n_i>M\},\qquad \frac1N\#\{i:n_i>M\}\xrightarrow{\mathbb P}\nu((M,\infty)),
\]
so for every $\varepsilon>0$,
\[
\mathbb P\Big(\big\|F^{S_{r,N}}-F^{S_{r,N}^{(M)}}\big\|_\infty>\frac{\nu((M,\infty))}{c}+\varepsilon\Big)\to0.
\]
For each fixed $M$, Step 3 identifies the truncated model's limit as $\mathcal T_c(\nu^{(M)})$; the two-limit argument is then performed in Lévy or bounded-Lipschitz distance (not KS distance: the limit may have atoms), letting $M\to\infty$ with $\nu((M,\infty))\to0$ and continuity of the fixed-point law in the population law.

\textbf{Step 3 (Silverstein convergence; random-diagonal transfer).} With $W_N^{(M)}:=\frac1d(D_N^{(M)})^{1/2}Z^\top Z(D_N^{(M)})^{1/2}\in\mathbb R^{N\times N}$: the empirical law of $D_N^{(M)}$ converges in probability, not per realization. From any subsequence, extract a further subsequence along which this convergence is almost sure; conditional on such a realization, apply the deterministic-population covariance theorem \cite{silverstein1995strong,bai2010spectral}; bounded convergence of the conditional error probabilities then yields convergence in probability along the original subsequence. Hence $\mu_{W_N^{(M)}}\Rightarrow\mu_W^{(M)}=\nu^{(M)}\boxtimes\mathrm{MP}(1/c)$ in probability.

\textbf{Step 4 (exact companion bookkeeping).} Let $Q_N^{(M)}:=\frac1NZD_N^{(M)}Z^\top\in\mathbb R^{d\times d}$ and $B_N^{(M)}:=\frac1N(D_N^{(M)})^{1/2}Z^\top Z(D_N^{(M)})^{1/2}=\frac dNW_N^{(M)}\in\mathbb R^{N\times N}$. With $A=Z(D_N^{(M)})^{1/2}$, $B=(D_N^{(M)})^{1/2}Z^\top$, the products $AB=NQ_N^{(M)}$ and $BA=NB_N^{(M)}$ share nonzero spectra, and with $r=\operatorname{rank}$, $c_N:=d/N$:
\[
\mu_{B_N^{(M)}}=c_N\,\mu_{Q_N^{(M)}}+\big(1-c_N\big)\delta_0\qquad\text{(exact, every }N\text{)};
\]
for $c_N>1$ the negative coefficient is cancelled by the forced zero atom of $\mu_{Q_N^{(M)}}$.

\textbf{Step 5 (assembly).} $\mu_{B_N^{(M)}}\Rightarrow D_c(\mu_W^{(M)})$, so the truncated model converges to $\mathcal T_c(\nu^{(M)})$; Step 2's two-limit scheme yields $\mu_Q=\mathcal T_c(\nu)$. $\square$

\textbf{Equivalence of the two forms of Definition~\ref{def:Tc}.} The companion transform of the $W$-model satisfies $\underline m_W(z)=c\,m_Q(cz)$ (from Step 4's scaling) and, by the classical companion form of Silverstein's equation with ratio $y=1/c$,
\[
z=-\frac1{\underline m_W(z)}+\frac1c\int\frac{t}{1+t\,\underline m_W(z)}\,\nu(dt).
\]
Substituting $\underline m_W(z)=c\,m_Q(cz)$, multiplying by $c$, and setting $\zeta=cz$ gives
\[
\zeta=-\frac1{m_Q(\zeta)}+\int\frac{t}{1+ct\,m_Q(\zeta)}\,\nu(dt),
\]
the covariance-side fixed-point equation. $\square$

\textbf{Normalization check.} $\eta=\delta_1$: $B_N=\frac1NZ^\top Z$ has LSD $c\,\mathrm{MP}(c)+(1-c)\delta_0$ by Step 4's counting applied to the standard Wishart; independently, the covariance-side equation with $\nu=\delta_1$ reduces to the MP fixed point $czm^2+(z+c-1)m+1=0$. Equating gives $\mathcal T_c(\delta_1)=\mathrm{MP}(c)$ and $D_c(\mathrm{MP}(1/c))=c\,\mathrm{MP}(c)+(1-c)\delta_0$. (Moment recursion: $m_2=1+cb_2$, and $m_3=1+3cb_2+c^2b_3$ whenever the corresponding moments are finite.) $\square$

\section{Proof of Theorem~\ref{thm:lsd-gen} (General population spectrum; Gaussian case)}\label{sec:S7}

Order of argument: (i) truncate counts to $\nu^{(M)}$ as in S.\ref{sec:S6}; the truncated $Q_N^{(M)}$ satisfies $\|Q_N^{(M)}\|_{\rm op}\le M\|\frac1NZZ^\top\|_{\rm op}=O_{\mathbb P}(1)$ (not a deterministic uniform bound over Gaussian realizations); (ii) given $D_N^{(M)}$ (independent of $Z_N$ by exogeneity), Gaussianity makes $Q_N^{(M)}$ conditionally orthogonally invariant, hence asymptotically free from the deterministic $\Sigma_N$ \cite{collins2006integration}; the probability modes are aligned as follows: the count-law convergence is in probability (S.\ref{sec:S6} Step 3's subsequence bridge), the freeness conclusion is applied conditionally, and the final LSD convergence is in probability; (iii) the truncated model's LSD is $H\boxtimes\mathcal T_c(\nu^{(M)})$; (iv) remove the truncation by Tool S.1 and pass through weak continuity of the free multiplicative convolution to $H\boxtimes\mathcal T_c(\nu)$. The matrices $\Sigma_N^{1/2}Q_N\Sigma_N^{1/2}$ and $Q_N^{1/2}\Sigma_NQ_N^{1/2}$ share their nonzero spectrum; for the bounded truncated model this relation, rather than any moment-sharing statement, is what the proof uses. $\square$

\section{Proof of Proposition~\ref{prop:atom} (Zero-eigenvalue mass)}\label{sec:S8}

\textbf{Rank loss (finite sample).} $S_{r,N}=\frac1N\sum_{i:n_i>0}n_iz_iz_i^\top$ has at most $\#\{i:n_i>0\}$ nonzero rank-one summands, so $\operatorname{rank}(S_{r,N})\le\min(d,\#\{i:n_i>0\})$; equality a.s.\ for continuously distributed $Z$ (general position). Note that a finite-sample rank proportion alone does not exclude a non-vanishing fraction of nonzero eigenvalues accumulating at zero (a vanishing fraction cannot form an atom, but a non-vanishing fraction tending to zero would matter), which is why the LSD statement is proved through the transform.

\textbf{LSD zero mass (companion-law max-atom rule).} With $q:=\nu(\{0\})$, the max-atom rule for free multiplicative convolution \cite{belinschi-atoms,bercovici1993free} applied to the companion law gives
\[
\mu_B(\{0\})=\max\big\{q,\ (1-c)_+\big\},\qquad \mu_B=D_c\big(\nu\boxtimes\mathrm{MP}(1/c)\big),
\]
since $\mathrm{MP}(1/c)$ has zero mass $(1-c)_+$ (present iff $c<1$). Reading off from $\mu_B=c\mu_Q+(1-c)\delta_0$:
\[
\mu_Q(\{0\})=\frac{\max\{q,(1-c)_+\}-(1-c)}{c}=\max\Big(0,\ 1-\frac{1-q}{c}\Big).
\]
For the separable case, $\mu_S(\{0\})=\max\{H(\{0\}),\mu_Q(\{0\})\}$ is the same rule applied to $H\boxtimes\mu_Q$; the alternative rank route would require $\operatorname{rank}(\Sigma_N)/d\to1-H(\{0\})$, which fails under (A5) alone (e.g.\ $\Sigma_N=N^{-1}I_d$), and is therefore not used. $\square$

\section{Proof of Proposition~\ref{prop:moments} (Spectral moments; convergence in probability)}\label{sec:S9}

\textbf{Expectation identity.} Let $Y:=\Sigma^{1/2}ZDZ^\top\Sigma^{1/2}$ and $S=Y/N$; write everything as $\mathbb E\operatorname{tr}Y^2/(dN^2)$. Conditional Gaussian Wick evaluation given $D$:
\[
\mathbb E\big[\operatorname{tr}Y^2\,\big|\,D\big]=\big((\operatorname{tr}\Sigma)^2+\operatorname{tr}\Sigma^2\big)\sum_id_i^2+\operatorname{tr}\Sigma^2\Big(\sum_id_i\Big)^2.
\]
For non-Gaussian entries an extra term $\kappa_4\sum_id_i^2\sum_j\Sigma_{jj}^2$ appears, which after normalization by $dN^2$ is $O(N^{-1})$ under (A5) together with $\frac1N\sum_id_i^2=O(1)$ (in probability or in expectation, as recorded with each result).

\textbf{Convergence in probability (auxiliary lemma).} \emph{Under the standing assumptions, $\frac1d\operatorname{tr}S_N\xrightarrow{\mathbb P}a_1$ and $\frac1d\operatorname{tr}S_N^2\xrightarrow{\mathbb P}m_{2,H}+c\,a_1^2\cdot(\text{diag second moment})$.}

\emph{First moment.} Given $D$, $\mathbb E_Z\big[\frac1d\operatorname{tr}S_N\,\big|\,D\big]=\frac{\operatorname{tr}\Sigma_N}{d}\cdot\frac{\sum_id_i}{N}$ and
\[
\operatorname{Var}_Z\Big(\frac1d\operatorname{tr}S_N\,\Big|\,D\Big)=\frac{2\operatorname{tr}\Sigma_N^2}{d^2N^2}\sum_id_i^2\ \longrightarrow\ 0,
\]
since $\sum_id_i=N$, $\frac1N\sum_id_i^2=O(1)$, and $\frac1d\operatorname{tr}\Sigma_N^2=O(1)$. Conditional Chebyshev plus convergence of the conditional mean give the claim for both pre and post matrices.

\emph{Second moment, two ingredients.} (i) \textbf{Truncated model.} With $d_i^{(M)}:=d_i\wedge M$,
\[
\big\|S_N^{(M)}\big\|_{\rm op}\le M\,\|\Sigma_N\|_{\rm op}\,\Big\|\frac1NZZ^\top\Big\|_{\rm op}=O_{\mathbb P}(1),
\]
so the truncated model has asymptotically bounded spectrum, and its LSD convergence (to $H\boxtimes\mathcal T_c(\nu^{(M)})$, by S.\ref{sec:S7}) upgrades to convergence in probability of its second spectral moment, whose limit is
\[
m_{2,H}\Big(\int t\,\nu^{(M)}(dt)\Big)^2+c\,a_1^2\int t^2\,\nu^{(M)}(dt)
\]
(the truncated law has mean $\int t\,\nu^{(M)}(dt)$, generally less than one; the mean-one formula does not apply).

(ii) \textbf{Truncation-error control.} Set $X_{N,M}:=\frac1d\big[\operatorname{tr}S(D)^2-\operatorname{tr}S(D^{(M)})^2\big]$. Since $D\succeq D^{(M)}\succeq0$, one has $S(D)\succeq S(D^{(M)})\succeq0$, and
\[
\operatorname{tr}A^2-\operatorname{tr}B^2=2\operatorname{tr}\big[B(A-B)\big]+\operatorname{tr}(A-B)^2\ge0\qquad(A\succeq B\succeq0),
\]
so $X_{N,M}\ge0$. With $\bar d:=\frac1N\sum_id_i$ and $\bar d_M:=\frac1N\sum_id_i^{(M)}$, the Wick identity applied to both matrices gives
\[
\mathbb E_Z\big[X_{N,M}\,\big|\,D\big]=\frac{(\operatorname{tr}\Sigma)^2+\operatorname{tr}\Sigma^2}{dN}\cdot\frac1N\sum_i\big(d_i^2-(d_i^{(M)})^2\big)+\frac{\operatorname{tr}\Sigma^2}{d}\big(\bar d^2-\bar d_M^2\big),
\]
and both terms vanish as $M\to\infty$ by the second tail moment of the count law (finite second moment; the mean difference $\bar d-\bar d_M\to0$ likewise). To remove the truncation without assuming unconditional integrability of the random tail, let $A_{N,M}(D):=\mathbb E_Z[X_{N,M}\mid D]$; since $X_{N,M}\ge0$, for any $\varepsilon,\delta>0$,
\[
\mathbb P(X_{N,M}>\varepsilon)\ \le\ \mathbb P(A_{N,M}>\delta)+\frac{\delta}{\varepsilon},
\]
because $\mathbb P(X>\varepsilon)\le\mathbb P(A>\delta)+\mathbb P(X>\varepsilon,\,A\le\delta)$ and the second term is at most $\mathbb E[A\mathbf 1_{A\le\delta}]/\varepsilon\le\delta/\varepsilon$. First let $N\to\infty$ at fixed $M$, then $M\to\infty$, then $\delta\downarrow0$; the truncation is removed.

The limit value follows from the expectation identity: pre-resampling ($D=\Lambda_N$): diagonal second moment $\to r_2$; post-resampling ($D=D_N$): $\to b_2=1+r_2$ (Theorem~\ref{thm:multi-count}). Increment $c\,a_1^2$. $\square$

\section{Proof of Proposition~\ref{prop:ess} (ESS; occupancy bound; the pair)}\label{sec:S10}

\textbf{ESS.} $N/\mathrm{ESS}=N\sum_iw_i^2\to r_2$, equivalently $\mathrm{ESS}/N\to1/r_2$; given $c$ and $H$, it determines the multinomial limiting second spectral moment (Proposition~\ref{prop:moments}), but not the full limiting spectral distribution.

\textbf{Occupancy bound (one line).} For $K\sim\MixPoi(\rho)$, $\mathbb EK=1$ and $\mathbb EK^2=1+r_2$; by Cauchy--Schwarz,
\[
1=(\mathbb EK)^2=\big(\mathbb E[K\mathbf 1_{\{K>0\}}]\big)^2\le\mathbb EK^2\cdot\mathbb P(K>0),
\]
so $p_{\rm multi}\ge1/(1+r_2)$. If $r_2=1$ with mean $1$, then $\mathrm{Var}(K)=0$, forcing $\rho=\delta_1$ and $\nu=\operatorname{Poi}(1)$. $\square$

\textbf{The pair.} $\rho^{(A)}=\frac12\delta_{1/4}+\frac12\delta_{7/4}$: mean $1$; $\mathbb EL^2=25/16$; count-law zero mass $\frac12e^{-1/4}+\frac12e^{-7/4}=0.476287$; count third moment $b_3=2.6875+3(25/16)+1=8.375$. $\rho^{(B)}=\frac9{10}\delta_{3/4}+\frac1{10}\delta_{13/4}$: mean $1$; $\mathbb EL^2=25/16$; zero mass $\frac9{10}e^{-3/4}+\frac1{10}e^{-13/4}=0.429007$; $b_3=3.8125+3(25/16)+1=9.5$. Identical ESS $=16N/25$ exactly at multiples of $10$. Since $m_3(\mathcal T_c(\nu))=1+3cb_2+c^2b_3$ (moments finite here) with equal $b_2=41/16$ and different $b_3$,
\[
m_3^{(B)}-m_3^{(A)}=c^2\,(9.5-8.375)=\frac98c^2>0,
\]
so the two limiting spectra differ for every $c>0$—this example is self-contained and does not rely on the general injectivity lemma (S.\ref{sec:S21}). At $c=1$, $\mu_Q(\{0\})=\nu(\{0\})$ witnesses $0.476287$ vs.\ $0.429007$. (Differences in third spectral moments establish different distributions; they do not by themselves establish different support edges.) $\square$

\section{Proof of Corollary~\ref{coro:deff} (Effective dimension; IPR counterexample)}\label{sec:S11}

Under the assumptions of Proposition~\ref{prop:moments}, with $a_1>0$: S.\ref{sec:S9}'s convergence in probability of the normalized first and second moments, plus continuity of $\deff/d$ at the positive limit point, gives the two displayed limits and the explicit gap. No finite-sample ordering is claimed, because of the random denominator: with $d=N=2$, $x_1=3e_1$, $x_2=e_2$, $w=(0.9,0.1)$, $\operatorname{IPR}(S_w)=(8.1^2+0.1^2)/8.2^2\approx0.97591$, while multinomial resampling gives counts $(2,0),(1,1),(0,2)$ with probabilities $0.81,0.18,0.01$ and IPR values $1,0.82,1$, hence the \textbf{conditional expectation} $\mathbb E[\operatorname{IPR}(S_{r,N})\mid X,w]=0.82+0.18\times0.82=0.9676$—strictly smaller. $\square$

\section{Proof of Theorem~\ref{thm:rankgap} (Rank-and-spectral-gap transition; critical touching)}\label{sec:S12}

\textbf{Active-column reduction and domination.} With $I_N:=\{i:n_i>0\}$,
\[
S_{r,N}=\frac1N\sum_{i\in I_N}n_i\,z_iz_i^\top=\frac1N Z_{I_N}D_{\rm act}Z_{I_N}^\top,\qquad D_{\rm act}\succeq I_{M_N},
\]
since active counts are at least $1$; hence $S_{r,N}\succeq\widetilde W_N:=\frac1NZ_{I_N}Z_{I_N}^\top$ and Weyl monotonicity applies. Conditional on the active index set, $Z_{I_N}$ has i.i.d.\ entries with the original entry distribution, by independence of the counts and the particle matrix (exogeneity). $M_N/N\to p$ in probability by Theorem~\ref{thm:multi-count}. Random $M_N$ is handled by conditioning and a subsequence argument (as in S.\ref{sec:S6} Step 3).

\textbf{Subcritical $c<p$.} $\widetilde W_N=\frac{M_N}{N}\cdot\frac1{M_N}Z_{I_N}Z_{I_N}^\top$; by Tool S.3, $\lambda_{\min}\big(\frac1{M_N}Z_{I_N}Z_{I_N}^\top\big)\to(1-\sqrt{c/p})^2$ in probability, hence $\lambda_{\min}(\widetilde W_N)\to\frac{M_N}{N}(1-\sqrt{c/p})^2\to(\sqrt p-\sqrt c)^2$ in probability, and Weyl monotonicity yields, for every $\varepsilon>0$, $\mathbb P\big(\lambda_{\min}(S_{r,N})\ge(\sqrt p-\sqrt c)^2-\varepsilon\big)\to1$. Since the limit is positive, $\mathbb P(\operatorname{rank}(S_{r,N})=d)\to1$ follows.

\textbf{Supercritical $c>p$.} Pass to the Gram matrix $\Gamma_N:=\frac1NZ_{I_N}^\top Z_{I_N}=\frac dN\cdot\frac1dZ_{I_N}^\top Z_{I_N}$ ($M_N\times M_N$). The matrix $\frac1dZ_{I_N}^\top Z_{I_N}$ has population dimension $M_N$, sample size $d$, and ratio $M_N/d\to p/c<1$ (note: $c>p$), so by Tool S.3 with $y=p/c<1$, $\lambda_{\min}\big(\frac1dZ_{I_N}^\top Z_{I_N}\big)\to(1-\sqrt{p/c})^2$ in probability; scaling by $d/N\to c$ gives $\lambda_{\min}(\Gamma_N)\to c(1-\sqrt{p/c})^2=(\sqrt c-\sqrt p)^2$ in probability. The Gram lower-edge convergence also implies $\mathbb P(\operatorname{rank}(Z_{I_N})=M_N)\to1$ (a positive limiting smallest eigenvalue forces full column rank with high probability; almost-sure rank equalities require continuous entries). On the event $\operatorname{rank}(Z_{I_N})=M_N$, both $\widetilde W_N$ and $S_{r,N}$ have rank exactly $M_N$ (the latter by domination: $\operatorname{range}(\widetilde W_N)\subseteq\operatorname{range}(S_{r,N})$, and the active expansion gives the reverse inequality), and with the convention $\lambda_1\ge\cdots\ge\lambda_d$ (descending), the smallest positive eigenvalue is $\lambda_{M_N}$; Weyl monotonicity at that index gives
\[
\lambda_{\min}^{+}(S_{r,N})=\lambda_{M_N}(S_{r,N})\ \ge\ \lambda_{M_N}(\widetilde W_N)=\lambda_{\min}(\Gamma_N),
\]
hence $\mathbb P\big(\lambda_{\min}^+(S_{r,N})\ge(\sqrt c-\sqrt p)^2-\varepsilon\big)\to1$. The zero-atom mass is Proposition~\ref{prop:atom}.

\textbf{Criticality $c=p$.} The fixed-point identity of Definition~\ref{def:Tc}, defined initially on $\mathbb C^+$, extends to the negative real axis by continuity (no singularity for nonnegative spectral measures). Write $p=\nu((0,\infty))$, set $c=p$, take $z=-s<0$, and write $m=m(-s)>0$. The equation rearranges to
\[
sm(-s)=\frac1c\int_{(0,\infty)}\frac{\nu(dt)}{1+ctm(-s)}.
\]
\emph{Step 1 ($m(-s)\to\infty$).} Suppose along some sequence $s_j\downarrow0$ one had $m(-s_j)\le C$. Then $\frac{1}{1+ctm(-s_j)}\ge\frac{1}{1+ctC}$ pointwise, so the right side is bounded below by $\frac1c\int_{(0,\infty)}\frac{\nu(dt)}{1+ctC}>0$, while the left side $s_jm(-s_j)\to0$—contradiction. \emph{Step 2 (no atom).} Now that $m(-s)\to\infty$, the integrand tends to $0$ pointwise and is bounded by $1$; dominated convergence gives $sm(-s)\to0$, and since $sm(-s)=\int\frac{s}{x+s}\mu_Q(dx)\to\mu_Q(\{0\})$, this proves $\mu_Q(\{0\})=0$. \emph{Step 3 (touching).} If the nonzero spectrum were separated from zero by a gap $\varepsilon>0$, all mass would lie in $[\varepsilon,\infty)$ and $m(-s)=\int\frac{\mu_Q(dx)}{x+s}\le1/\varepsilon$, contradicting Step 1. Hence $\inf\operatorname{supp}\mu_Q=0$.

\textbf{Finite-sample statement at criticality.} For every $\varepsilon>0$, $\mu_Q((0,\varepsilon))>0$ by Step 3; choose $f\in C_c((0,\varepsilon))$, $f\ge0$, $\int f\,d\mu_Q>0$. Empirical spectral convergence gives $\int f\,d\mu_N\xrightarrow{\mathbb P}\int f\,d\mu_Q>0$, so with probability tending to one at least one eigenvalue lies in $(0,\varepsilon)$: $\mathbb P(\lambda_{\min}^+(S_{r,N})<\varepsilon)\to1$; nonnegativity gives the other direction, hence $\lambda_{\min}^+(S_{r,N})\xrightarrow{\mathbb P}0$. (No no-eigenvalues-outside-the-support input is needed for this statement; such inputs serve stronger claims, e.g.\ convergence to the true positive lower edge off criticality, or edge rates.)

\textbf{Divergence rate of the Stieltjes transform.} Multiplying the critical identity by $m(-s)$:
\[
s\,m(-s)^2=\frac1c\int_{(0,\infty)}\frac{m(-s)}{1+ctm(-s)}\,\nu(dt).
\]
As $s\downarrow0$, $\frac{m(-s)}{1+ctm(-s)}\to\frac1{ct}$ pointwise and $\frac{m(-s)}{1+ctm(-s)}\le\frac1{ct}$; for integer-valued counts, $t\ge1$ on the support, so $t^{-1}$ is integrable, and dominated convergence gives $s\,m(-s)^2\to\frac1{c^2}\int_{(0,\infty)}\frac1t\,\nu(dt)$, i.e.\ $m(-s)\sim\frac1c\big(\int t^{-1}\nu(dt)\big)^{1/2}s^{-1/2}$. (This is an asymptotic for the Stieltjes transform along the negative real axis; a density-level $x^{-1/2}$ asymptotic would require additional argument.)

\emph{Benchmark scope.} The domination argument discards the additional weights $D_{\rm act}-I$, so it need not identify the exact lower edge. The exact regular lower edge depends on the full count law and is characterized by Proposition~\ref{prop:edges}. $\square$

\section{Proof of Proposition~\ref{prop:edges} (Edges; unbounded support via positive fractions)}\label{sec:S13}

\textbf{Edge parametrization.} By \cite{silverstein1995analysis} (see also \cite[Ch.~6]{bai2010spectral}), the law $\mu_W$ characterized by the Silverstein equation admits the real parametrization $z(u)=-\frac1u+y\int\frac{t}{1+tu}\nu(dt)$, $y=1/c$, on real $u\neq0$ with $-1/u\notin\operatorname{supp}\nu$; the support-complement branches are those with $z'(u)>0$, and regular endpoints arise at boundary critical points of these branches. Since $S_{r,N}$ and $B_N=\frac dNW_N$ share nonzero spectra and $B_N$ is $W_N$ scaled by $d/N\to c$, the regular edges of the nonzero part of $\mu_Q$ are $c\,z(u^*)$. Consistency: $\nu=\delta_1$ gives $c(1\mp\sqrt{1/c})^2=(1\mp\sqrt c)^2$. The critical zero endpoint corresponds to $u\to\infty$ rather than a finite critical point and is treated separately by Theorem~\ref{thm:rankgap}(iii). $\square$

\textbf{Unbounded support.} Since $\rho$ has mean $1$, $\rho((0,\infty))>0$, so $\nu=\MixPoi(\rho)$ satisfies $\nu(\{k\})>0$ for every $k\in\mathbb Z_{\ge0}$; write $p_K:=\nu([K,\infty))>0$, noting $p_K\to0$. Fix a level $L>0$ and choose $K$ with $p_K<c/4$ and $Kc/4>L$. Set $A_K:=\{i:n_i\ge K\}$ and $B_{K,N}:=\frac KN Z_{A_K}Z_{A_K}^\top$. Then $|A_K|/N\to p_K$ and, by Tool S.3 (and its Gram form, which gives full column rank of $Z_{A_K}$ with probability tending to one), the smallest positive eigenvalue of $B_{K,N}$ converges in probability to
\[
K\big(\sqrt c-\sqrt{p_K}\big)^2\ \ge\ K\Big(\sqrt c-\frac{\sqrt c}{2}\Big)^2=\frac{Kc}{4}\ >\ L,
\]
while with probability tending to one its nonzero eigenvalues number $|A_K|$, a fraction $|A_K|/d\to p_K/c>0$ of the dimension. Since $S_{r,N}\succeq B_{K,N}$ (PSD order), Weyl monotonicity places at least $|A_K|$ eigenvalues of $S_{r,N}$ above $a_K:=K(\sqrt c-\sqrt{p_K})^2$ with high probability. Now fix $L<\ell<a_K$ and take bounded continuous $f$ with $f(x)=0$ for $x\le L$, $f(x)=1$ for $x\ge\ell$, $0\le f\le1$. With probability tending to one,
\[
\int f\,d\mu_N\ \ge\ \frac{|A_K|}{d},
\]
so by weak convergence $\int f\,d\mu_Q\ge p_K/c>0$; and since $f\le\mathbf 1_{(L,\infty)}$, $\mu_Q((L,\infty))>0$. As $L$ was arbitrary, $\sup\operatorname{supp}\mu_Q=\infty$; the same argument gives $\lambda_{\max}(S_{r,N})\xrightarrow{\mathbb P}\infty$ in probability. Whether the support can split into several intervals for specific $\rho$ is left open. $\square$

\section{Proof of Theorem~\ref{thm:res-count} (Residual count law; including concentration and all branches)}\label{sec:S14}

\textbf{Setup.} $n_i=K_i+T_i$ with $K_i=\lfloor\lambda_i\rfloor$ deterministic and, when $R_N=\sum_iU_i>0$, $\boldsymbol T\sim\mathrm{Multinomial}(R_N;U/R_N)$; when $R_N=0$, $\boldsymbol T=0$. By (A4R), $\pi_N\Rightarrow\pi$ on $\mathbb Z_{\ge0}\times[0,1]$ and $R_N/N\to\E_\pi U$.

\textbf{Weak convergence, case $\E_\pi U>0$.} Apply the coupling lemma of Section~\ref{sec:S5} with $(\mu_i)=(U_i)$, $R=R_N$: the residual counts couple to independent $\tilde n_i'\sim\operatorname{Poi}(U_i)$ with rank defect $O(\sqrt{R_N})=o(N)$; the diffuse condition on residual weights is automatic ($\max_iU_i/R_N\to0$). For bounded $f:\mathbb Z_{\ge0}\to\mathbb R$, set $g_f(k,u):=\mathbb E[f(k+\operatorname{Poi}(u))]$, which is bounded and continuous on $\mathbb Z_{\ge0}\times[0,1]$; hence by (A4R), $\frac1N\sum_ig_f(K_i,U_i)\to\int g_f\,d\pi$—this is the precise point at which the joint convergence $\pi_N\Rightarrow\pi$ (and not merely $\rho_N\Rightarrow\rho$) is needed. Concentration follows from the bounded-differences argument on the residual ancestor indices. Hence $\nu_N\Rightarrow\mathcal L(K+P_U)$.

\textbf{Weak convergence, case $\E_\pi U=0$.} Then $R_N/N\to0$, and
\[
\#\{i:n_i\neq K_i\}\ \le\ R_N\ =\ o(N),
\]
so for bounded $f$, $\big|\frac1N\sum_if(n_i)-\frac1N\sum_if(K_i)\big|\le2\|f\|_\infty R_N/N\to0$; by (A4R), $\pi_N\Rightarrow\pi$, and since $\E_\pi U=0$ forces $U=0$ $\pi$-a.s., $\mathcal L(K)=\mathcal L(K+U)=\rho$.

\textbf{Second-moment convergence (all regimes).} The following expectation and variance bounds hold for every $R_N>0$, regardless of whether $R_N/N$ tends to zero. Decompose
\[
\frac1N\sum_in_i^2=\frac1N\sum_iK_i^2+\frac2N\sum_iK_iT_i+\frac1N\sum_iT_i^2.
\]
The exact expectation (for $R_N>0$) is
\[
\frac1N\sum_i\mathbb E n_i^2=\frac1N\sum_i\lambda_i^2+\frac{R_N}{N}-\frac1{NR_N}\sum_iU_i^2,\qquad \Big|\frac1{NR_N}\sum_iU_i^2\Big|\le\frac1N,
\]
since $\mathbb ET_i^2=U_i(1-U_i/R_N)+U_i^2$ and $K_i^2+2K_iU_i+U_i^2=\lambda_i^2$. For the variance: multinomial variances satisfy $\operatorname{Var}(T_i)\le U_i$ and cross-covariances are nonpositive, so with $q_i=U_i/R_N$,
\[
\operatorname{Var}\Big(\frac2N\sum_iK_iT_i\Big)\ \le\ \frac4{N^2}\sum_iU_iK_i^2\ \le\ \frac4{N^2}\sum_i\lambda_i^2\ =\ O(N^{-1}),
\]
and the collision computation of S.\ref{sec:S4} with $R_N$ ancestor draws and normalization $N$ (using $\sum_iq_i^2\le1/R_N$, $\sum_iq_i^3\le1/R_N^2$) gives
\[
\operatorname{Var}\Big(\frac1N\sum_iT_i^2\Big)\ \le\ C\,\frac{R_N}{N^2}\ \le\ \frac CN.
\]
The two random terms are not independent, but independence is not needed: $\operatorname{Var}(A+B)\le2\operatorname{Var}(A)+2\operatorname{Var}(B)$. Hence $\frac1N\sum_in_i^2\to r_2+\E_\pi U$ in probability in all regimes ($R_N=0$ is deterministic and immediate). $\square$

\section{Proof of Theorem~\ref{thm:res-lsd} (Residual spectral transform)}\label{sec:S15}

$\nu_{\rm res}$ has mean $\E[K+U]=1$ and finite second moment $r_2+\E_\pi U$. Silverstein's theorem transfers to a random diagonal $D_N$ that is independent of $Z_N$ with converging empirical law—it does not require the diagonal entries to be independent among themselves. Hence one may work with the raw residual count diagonal directly: (i) truncate the \textbf{total} counts, $D_N^{(M)}:=\operatorname{diag}(n_i\wedge M)$ (truncating only the Poisson part would not bound the diagonal, since $K_i$ may be unbounded); (ii) by Theorem~\ref{thm:res-count}, the empirical law of $D_N^{(M)}$ converges to $\nu_{\rm res}^{(M)}$; (iii) conditional on $D_N^{(M)}$, apply the covariance-spectrum theorem (with the subsequence bridge of S.\ref{sec:S6} Step 3); (iv) remove the truncation by the rank bound. This unifies the $\E_\pi U=0$ and $\E_\pi U>0$ cases; Poissonization is retained as a count-law tool only. (Two distinct truncation controls are used and should not be conflated: LSD rank truncation requires only the tail probability $\nu((M,\infty))\to0$; second-tail-moment control is needed for the empirical-moment arguments of S.\ref{sec:S9}.) The Gaussian separable extension (S.\ref{sec:S7}) is unchanged. $\square$

\section{Proof of Corollary~\ref{coro:res-atom} (Residual zero atom and threshold)}\label{sec:S16}

Under $\nu_{\rm res}=\mathcal L(K+P_U)$, a zero count requires $K=0$ and $P_U=0$; on $\{K=0\}$ the latter has probability $e^{-U}$ (with $U=1$ allowed in the limit, contributing $e^{-1}$). Hence $\nu_{\rm res}(\{0\})=\E_\pi[\mathbf 1_{\{K=0\}}e^{-U}]$ and $p_{\rm res}(\pi)=1-\nu_{\rm res}(\{0\})$. Theorem~\ref{thm:rankgap}'s transfer: under the same identity-population, independence, and entry assumptions, the noncritical proof transfers through active-column domination, while the critical proof transfers through the residual LSD and its fixed-point equation. (For the occupancy step, $\mathbf 1_{\{k>0\}}$ is continuous on the discrete count space; if the measures are placed on the real half-axis, replace it by a bounded continuous function agreeing with it at integers.) $\square$

\section{Proof of Proposition~\ref{prop:res-cost} (Residual second-moment cost)}\label{sec:S17}

The count-level identities $\E[K+P_U\mid K,U]=K+U$ and $\Var(K+P_U\mid K,U)=U$ give $b_2(\nu_{\rm res})=\E[(K+U)^2]+\E[U]=r_2+\E_\pi U$. The spectral-moment statement then follows from the residual version of S.\ref{sec:S9}'s argument (the truncation route of S.\ref{sec:S9} applies verbatim once the count second-moment convergence of S.\ref{sec:S14} is available, so the claim is an empirical-moment convergence, not merely a computation of the limit law's second moment). Strictness: $\E_\pi U\le1$ always (equality possible: the endpoint $\pi=\delta_{(0,1)}$), so the inequality is $\le c\,a_1^2$, strict iff $a_1>0$ and $\E_\pi U<1$; for $a_1>0$, equality with the multinomial cost occurs precisely at $\pi=\delta_{(0,1)}$, since $\E_\pi(K+U)=1$ and $\E_\pi U=1$ jointly force $U=1$, $K=0$ $\pi$-a.s. Vanishing: the cost vanishes iff $a_1=0$ or $\E_\pi U=0$; assuming $a_1>0$, iff $\pi(\{U=0\})=1$—a property of the limit law, not of every finite sample (a vanishing fraction of exceptional weights is allowed). Exact finite-$N$ zero cost: whenever counts are deterministic (in particular when every $\lambda_i\in\mathbb Z_{\ge0}$, giving $R_N=0$ and $n_i=\lambda_i$), or under the geometric degeneracy of S.\ref{sec:S3}. $\square$

\section{Proof of Corollary~\ref{coro:rank-order} (Rank-preservation ordering, two levels)}\label{sec:S18}

The multinomial zero mass under the joint description: $\nu_{\rm multi}(\{0\})=\E_\pi[e^{-(K+U)}]=\E_\pi[e^{-L}]$. The residual zero mass (S.\ref{sec:S16}): $\nu_{\rm res}(\{0\})=\E_\pi[\mathbf 1_{\{K=0\}}e^{-U}]$. Difference:
\[
\nu_{\rm multi}(\{0\})-\nu_{\rm res}(\{0\})=\E_\pi\big[\mathbf 1_{\{K\ge1\}}e^{-(K+U)}\big]\ \ge\ 0,
\]
strict iff $\pi(K\ge1)>0$ (the integrand is strictly positive where $\mathbf 1_{\{K\ge1\}}=1$). The spectral ordering $\mu_{\rm res}(\{0\})\le\mu_{\rm multi}(\{0\})$ follows since $q\mapsto\max(0,1-(1-q)/c)$ is nondecreasing. For the spectral strictness, write $q_{\rm multi}:=\big(1-p_{\rm multi}/c\big)_+$ and $h_0:=H(\{0\})$; then $\mu_{\rm res}(\{0\})<\mu_{\rm multi}(\{0\})$ iff $\pi(K\ge1)>0$ and $q_{\rm multi}>h_0$ (in the identity-population case $h_0=0$, reducing to $c>p_{\rm multi}$; for general $H$ the inequality can be masked by a common atom $h_0$). In the window $c\in(p_{\rm multi},p_{\rm res})$, multinomial resampling carries a macroscopic zero atom while residual resampling is of full rank with probability tending to one (identity population). $\square$

\section{Proof of Theorem~\ref{thm:classification} (Classification of residual limits under $\rho=\delta_1$)}\label{sec:S19}

\textbf{Necessity.} If $\rho_N\Rightarrow\delta_1$, then (A4R) forces $\pi$ to be supported on $\{(k,u):k+u=1\}$, since the push-forward under the continuous map $(k,u)\mapsto k+u$ must be $\delta_1$. Since $k\in\mathbb Z_{\ge0}$ and $u\in[0,1]$, the line $k+u=1$ meets the product set only at $(1,0)$ and $(0,1)$. Hence $\pi=(1-\alpha)\delta_{(1,0)}+\alpha\delta_{(0,1)}$ for some $\alpha\in[0,1]$, and Theorem~\ref{thm:res-count} gives $\nu_{\rm res}=(1-\alpha)\delta_1+\alpha\operatorname{Poi}(1)$, with $\E_\pi U=\alpha$, $\nu_{\rm res}(\{0\})=\alpha e^{-1}$, $p_{\rm res}=1-\alpha/e$, and second-moment cost $c\,a_1^2\alpha$ (Proposition~\ref{prop:res-cost}). $\square$

\textbf{Realizability.} (i) $\alpha=0$: exactly uniform weights ($K_i=1$, $U_i=0$, $R_N=0$). (ii) $\alpha=1$ (bounded two-level endpoint construction): let $\lambda_i=1-\frac1N$ for $i<N$ and $\lambda_N=2-\frac1N$. Then $\sum_i\lambda_i=N$ exactly, $\max_i\lambda_i<2$, $\frac1N\sum_i\lambda_i^2=1+\frac1N-\frac1{N^2}\to1$, and $\rho_N\Rightarrow\delta_1$; the first $N-1$ particles have $(\lfloor\lambda_i\rfloor,\operatorname{frac}(\lambda_i))=(0,1-\frac1N)\to(0,1)$, and the last has proportion $1/N\to0$, so $\pi_N\Rightarrow\delta_{(0,1)}$; Theorem~\ref{thm:res-count} then yields $\nu_{\rm res}=\operatorname{Poi}(1)$ directly, with no informal per-particle argument. (iii) $0<\alpha<1$: let $\alpha_N\to\alpha$ with $\alpha_NN$ integral and $\varepsilon_N\downarrow0$; put a fraction $\alpha_N$ of particles at $1-\varepsilon_N$ and the rest at $1+\frac{\alpha_N}{1-\alpha_N}\varepsilon_N$. The mean is exactly one: $\alpha_N(1-\varepsilon_N)+(1-\alpha_N)\big(1+\frac{\alpha_N}{1-\alpha_N}\varepsilon_N\big)=1$; the exact second moment is
\[
\frac1N\sum_i\lambda_i^2=1+\frac{\alpha_N}{1-\alpha_N}\,\varepsilon_N^2,
\]
and the exact residual fraction is $R_N/N=\alpha_N$ (both used in experiment E4); the joint limit is $(1-\alpha)\delta_{(1,0)}+\alpha\delta_{(0,1)}$. All constructions satisfy both (A4) and (A4R). $\square$

\section{Proof of Corollary~\ref{coro:uniform-contrast} (Exact-uniform-weight contrast)}\label{sec:S20}

For $w_i=1/N$ exactly: residual resampling leaves the second-moment matrix unchanged at every finite $N$ ($n_i\equiv1$ deterministically); in the identity-population setting its LSD is therefore $\mathrm{MP}(c)$ (Theorem~\ref{thm:res-lsd} with $\eta=\delta_1$, normalization check of S.\ref{sec:S6}), with zero cost and $p_{\rm res}=1$; in the general Gaussian population model the unchanged matrix has LSD $H\boxtimes\mathrm{MP}(c)$. Multinomial resampling gives $\operatorname{Poi}(1)$, the distorted $\mathcal T_c(\operatorname{Poi}(1))$, cost $c\,a_1^2$, and $p_{\rm multi}=1-e^{-1}$. Both sides have $r_2=1$. The interpolation clause is Theorem~\ref{thm:classification}. $\square$

\section{Lemma (Injectivity of $\mathcal T_c$; self-contained, all nonnegative laws)}\label{sec:S21}

\textbf{Statement.} For every $c>0$, $\mathcal T_c$ is injective on the class of \textbf{all} probability laws on $[0,\infty)$.

\textbf{Proof.} Suppose $\mathcal T_c(\nu)=\mathcal T_c(\nu')$ and let $m$ be the common Stieltjes transform. For every $z$ in the upper half-plane, Definition~\ref{def:Tc}'s fixed-point equation gives
\[
\int\frac{t}{1+ctm(z)}\,\nu(dt)\ =\ \int\frac{t}{1+ctm(z)}\,\nu'(dt).
\]
Take $z=-s<0$ and set $r:=cm(-s)>0$. The map $s\mapsto cm(-s)$ is continuous and strictly decreasing on any interval of the negative real axis ($\frac{d}{ds}m(-s)=-\int\frac{\mu(dx)}{(x+s)^2}<0$), so as $s$ varies over a nondegenerate interval, $r$ varies over a nondegenerate interval $I\subset(0,\infty)$—that is all the proof needs. Define
\[
F_\nu(r):=\int\frac{t}{1+rt}\,\nu(dt)\ =\ \frac1r-\frac1{r^2}\,m_\nu\Big(-\frac1r\Big),
\]
the identity holding since $\frac1r\int\frac{\nu(dx)}{1+rx}=\frac1r-\frac1{r^2}m_\nu(-1/r)$ by direct algebra. Equality of $F_\nu$ and $F_{\nu'}$ on $I$ gives equality of $m_\nu$ and $m_{\nu'}$ on the corresponding negative interval; the identity theorem for analytic functions extends the equality to the common domain of analyticity, and uniqueness of the Stieltjes transform gives $\nu=\nu'$. (This route needs no mean-one assumption, no weighted measure, and no separate recovery of the atom at zero; the earlier cancellation-by-$S$-transform argument is retained only as a remark.) $\square$

\section{Additional References for the Supplement}\label{sec:S22}

[S1] J.~W.~Silverstein and S.~Choi, ``Analysis of the limiting spectral distribution of large dimensional random matrices,'' \emph{J. Multivariate Anal.}, vol.~54, no.~2, pp.~295--309, 1995 (main-text \cite{silverstein1995analysis}; edge parametrization).
[S2] S.~Belinschi, ``The atoms of the free multiplicative convolution of two probability distributions,'' \emph{Integral Equations and Operator Theory}, vol.~46, no.~4, pp.~377--386, 2003 (main-text \cite{belinschi-atoms}; max-atom rule used in S.\ref{sec:S8}).
[S3] Z.~D.~Bai and Y.~Q.~Yin, ``Limit of the smallest eigenvalue of a large dimensional sample covariance matrix,'' \emph{Ann. Probab.}, vol.~21, no.~3, pp.~1275--1294, 1993 (main-text \cite{bai1993limit}; Tool S.3).
[S4] H.~Bercovici and D.~Voiculescu, ``Free convolution of measures with unbounded support,'' \emph{Indiana Univ. Math. J.}, vol.~42, no.~3, pp.~733--773, 1993 (main-text \cite{bercovici1993free}).

\section{Solver Unit Tests}\label{sec:S23}

All spectral predictions in the main text are computed from the fixed-point
equation of Definition~\ref{def:Tc} by two independent implementations (covariance side
and companion side, the latter via the auxiliary variable $u=c\,m$ and
$e=u+(c-1)/z$). The battery below is run before any experiment; each row
reports the realized tolerance.

\begin{center}
\renewcommand{\arraystretch}{1.25}
\begin{tabular}{llll}
\toprule
Check & Quantity & Realized & Criterion \\
\midrule
Branch & $\operatorname{Im}m(z)$ at $z=\mathrm{i},\,3+2\mathrm{i},\,0.05+0.5\mathrm{i}$
  & $0.610,\ 0.263,\ 0.919$ & $>0$ \\
Equation residual & $\max$ over grid & $2.2\times10^{-13}$ & $\ll 10^{-8}$ \\
Normalization & $z\,m(z)$ at $z=10^{6}\mathrm{i}$ & $-1.000000+10^{-6}\mathrm{i}$ & $\to-1$ \\
$\mathrm{MP}(c)$ recovery & max interior relative density error & $0.760\%$ & $<1\%$ \\
Zero-mass consistency & max $|$solver $-$ analytic$|$, three configurations
  & $4.3\times10^{-8}$ & $\ll10^{-6}$ \\
Truncation $K{=}20$ vs $K{=}60$ & $\max|\Delta f|$ on grid & $2.1\times10^{-15}$ & $\ll10^{-8}$ \\
 & discarded second tail moment & $5.1\times10^{-15}$ & $\ll10^{-8}$ \\
Dual-solver cross-check & transform-relation residual & $6.4\times10^{-13}$ & $\ll10^{-8}$ \\
 & companion identity $\int(1+tu)^{-1}=-z\,e$ residual & $1.9\times10^{-13}$ & $\ll10^{-8}$ \\
 & density deviation, $x>0.15$ & $0.000\%$ & $<2\%$ \\
\bottomrule
\end{tabular}
\end{center}

The bandwidth sensitivity $\max|\Delta f|=0.1438$ at $x=0.020$ between
$h=10^{-3}$ and $h=5\times10^{-4}$ is edge sharpening at the left boundary
layer and is reported rather than tested; all conclusions in the main text are
drawn away from this layer, and the $h/2$ sensitivity check reported there
leaves every conclusion unchanged. The classical companion fixed-point forms
found in the literature govern the companion transform of the $W$-side model
and do not apply to the object studied here; the cross-check above is the
reason the two implementations cannot silently agree on a wrong equation.

\section{Experiment E2: Full Twelve-Point Sweep}\label{sec:S24}

Exactly uniform weights, so that $p_{\mathrm{multi}}=1-e^{-1}$ and the
threshold is $c=1-e^{-1}\approx0.6321$. Part~A is count-only (exact rank
identity, $N=10^{4}$, $300$ repetitions); Part~B reports the median and
quartiles of $\lambda_{\min}^{+}$ over $15$ repetitions at $N=1000$, against
the Marchenko--Pastur lower-edge benchmark $(\sqrt c-\sqrt p)^2$ of
Theorem~\ref{thm:rankgap} and the true lower edge computed from the
critical equation of Proposition~\ref{prop:edges}.

\begin{center}
\renewcommand{\arraystretch}{1.15}
\begin{tabular}{cccc}
\toprule
$c$ & $q_{0,N}$ empirical & $q_0$ theory & $|$gap$|$ \\
\midrule
$0.35$   & $0$                  & $0$      & $0$ \\
$0.45$   & $0$                  & $0$      & $0$ \\
$0.55$   & $0$                  & $0$      & $0$ \\
$0.60$   & $0$                  & $0$      & $0$ \\
$0.6321$ & $0.00203\pm0.00017$  & $0$      & critical band \\
$0.66$   & $0.04223\pm0.00027$  & $0.04224$ & $1\times10^{-5}$ \\
$0.72$   & $0.12203\pm0.00024$  & $0.12205$ & $3\times10^{-5}$ \\
$0.80$   & $0.20997\pm0.00023$  & $0.20985$ & $1\times10^{-4}$ \\
$0.90$   & $0.29750\pm0.00020$  & $0.29764$ & $1\times10^{-4}$ \\
$1.00$   & $0.36803\pm0.00019$  & $0.36788$ & $2\times10^{-4}$ \\
$1.08$   & $0.41480\pm0.00018$  & $0.41470$ & $1\times10^{-4}$ \\
$1.15$   & $0.45029\pm0.00016$  & $0.45033$ & $4\times10^{-5}$ \\
\bottomrule
\end{tabular}
\end{center}

The nonzero reading $0.00203\pm0.00017$ at the critical point $c=0.6321$ is
the finite-size critical-band smoothing annotated in the main text; at every
off-critical grid point the empirical zero mass tracks
$\max\big(0,1-(1-e^{-1})/c\big)$ to within $\pm0.001$.

\begin{center}
\renewcommand{\arraystretch}{1.15}
\begin{tabular}{ccccc}
\toprule
$c$ & $\lambda_{\min}^{+}$ median & quartiles & MP benchmark & true edge \\
\midrule
$0.35$   & $0.05906$ & $[0.05517,\,0.06203]$ & $0.04139$ & $0.05607$ \\
$0.45$   & $0.02166$ & $[0.02018,\,0.02349]$ & $0.01544$ & $0.02058$ \\
$0.55$   & $0.00399$ & $[0.00341,\,0.00471]$ & $0.00286$ & $0.00376$ \\
$0.60$   & $0.00089$ & $[0.00071,\,0.00106]$ & $0.00042$ & $0.00055$ \\
$0.6321$ & $8.5\times10^{-5}$ & $[2.4\times10^{-5},\,1.4\times10^{-4}]$ & $0$ & $0$ \\
$0.66$   & $0.00037$ & $[0.00019,\,0.00054]$ & $0.00030$ & $0.00039$ \\
$0.72$   & $0.00359$ & $[0.00316,\,0.00432]$ & $0.00286$ & $0.00369$ \\
$0.80$   & $0.01433$ & $[0.01313,\,0.01499]$ & $0.00987$ & $0.01267$ \\
$0.90$   & $0.03050$ & $[0.02884,\,0.03126]$ & $0.02360$ & $0.03004$ \\
$1.00$   & $0.05428$ & $[0.04954,\,0.05957]$ & $0.04200$ & $0.05308$ \\
$1.08$   & $0.07949$ & $[0.07602,\,0.08137]$ & $0.05962$ & $0.07495$ \\
$1.15$   & $0.09721$ & $[0.09564,\,0.10371]$ & $0.07691$ & $0.09627$ \\
\bottomrule
\end{tabular}
\end{center}

The median tracks the true edge at every grid point and collapses at
criticality ($8.5\times10^{-5}$ at $c=p$), as Theorem~\ref{thm:rankgap}(iii) requires. The
true edge exceeds the benchmark at every off-critical point, by a factor
between $1.25$ and $1.27$ over the upper half of the grid---the
``approximately $1.3$'' conservativeness quoted in the main text.

\section{Experiment E1: Count Probability Mass Functions}\label{sec:S25}

Count probability mass functions for the design of Section~\ref{sec:ix-e1} of the
main text ($N=1000$, $c=0.8$), reported here at $200$ paired repetitions as an
independent higher-replication check of the same pre-registered design; the
uniform--residual row is degenerate ($n_i\equiv1$) and is omitted.

\begin{center}
\renewcommand{\arraystretch}{1.15}
\begin{tabular}{ccccccc}
\toprule
 & \multicolumn{2}{c}{uniform, multinomial}
 & \multicolumn{2}{c}{two-point, multinomial}
 & \multicolumn{2}{c}{two-point, residual} \\
\cmidrule(lr){2-3}\cmidrule(lr){4-5}\cmidrule(lr){6-7}
$k$ & theory & empirical & theory & empirical & theory & empirical \\
\midrule
$0$ & $0.36788$ & $0.36764\pm0.00068$ & $0.41483$ & $0.41441\pm0.00071$ & $0.30327$ & $0.30487\pm0.00065$ \\
$1$ & $0.36788$ & $0.36704\pm0.00110$ & $0.31898$ & $0.31982\pm0.00104$ & $0.45490$ & $0.45228\pm0.00111$ \\
$2$ & $0.18394$ & $0.18602\pm0.00080$ & $0.16342$ & $0.16302\pm0.00075$ & $0.18954$ & $0.19010\pm0.00076$ \\
$3$ & $0.06131$ & $0.06051\pm0.00048$ & $0.06907$ & $0.06903\pm0.00050$ & $0.04423$ & $0.04446\pm0.00041$ \\
$4$ & $0.01533$ & $0.01516\pm0.00024$ & $0.02432$ & $0.02436\pm0.00032$ & $0.00711$ & $0.00736\pm0.00019$ \\
$5$ & $0.00307$ & $0.00312\pm0.00012$ & $0.00714$ & $0.00714\pm0.00018$ & $0.00087$ & $0.00085\pm0.00006$ \\
\bottomrule
\end{tabular}
\end{center}

The theoretical columns are $\operatorname{Poi}(1)$,
$\operatorname{MixPoi}(\tfrac12\delta_{1/2}+\tfrac12\delta_{3/2})$, and
$\mathcal L(K+P_U)$ with $(K,U)$ uniform on $\{(0,\tfrac12),(1,\tfrac12)\}$
respectively, exactly as Theorems~\ref{thm:multi-count}
and~\ref{thm:res-count} predict; the zero row reproduces the
zero masses of Table~\ref{tab:e1} of the main text within the respective error bars.

\section{Experiment E4: Full Sweep}\label{sec:S26}

Full sweep $N=2^{8},\dots,2^{14}$ for the three constructions of Theorem~\ref{thm:classification},
extending Table~\ref{tab:e4} of the main text (which reports the endpoints). Columns:
residual randomization fraction $R_N/N$ (theory $\alpha$), zero-count fraction
(theory $\alpha e^{-1}$), count second moment $b_{2,N}$ (theory $1+\alpha$),
and second-order weight deviation. For the uniform construction the counts are
deterministic and all entries are exact.

\begin{center}
\renewcommand{\arraystretch}{1.15}
\begin{tabular}{lccccc}
\toprule
Construction & $N$ & $R_N/N$ & zero fraction & $b_{2,N}$ & $\frac1N\sum(\lambda_i-1)^2$ \\
\midrule
interior & $256$   & $0.3984$ & $0.15408\pm0.00089$ & $1.3963\pm0.0037$ & $2.587\times10^{-3}$ \\
($\alpha=0.4$) & $512$   & $0.4004$ & $0.1536\pm0.0006$ & $1.3997\pm0.0027$ & $1.304\times10^{-3}$ \\
 & $1024$  & $0.4004$ & $0.1525\pm0.0005$ & $1.4046\pm0.0020$ & $6.521\times10^{-4}$ \\
 & $2048$  & $0.3999$ & $0.1501\pm0.0003$ & $1.3993\pm0.0014$ & $3.254\times10^{-4}$ \\
 & $4096$  & $0.3999$ & $0.1494\pm0.0002$ & $1.3997\pm0.0010$ & $1.627\times10^{-4}$ \\
 & $8192$  & $0.4000$ & $0.1488\pm0.0002$ & $1.4000\pm0.0007$ & $8.139\times10^{-5}$ \\
 & $16384$ & $0.4000$ & $0.1483\pm0.0001$ & $1.3998\pm0.0005$ & $4.069\times10^{-5}$ \\
\addlinespace[2pt]
endpoint & $256$   & $0.9961$ & $0.36859\pm0.00135$ & $2.0034\pm0.0062$ & $3.891\times10^{-3}$ \\
($\alpha=1$) & $512$   & $0.9980$ & $0.3681\pm0.0010$ & $1.9991\pm0.0043$ & $1.949\times10^{-3}$ \\
 & $1024$  & $0.9990$ & $0.3669\pm0.0007$ & $1.9967\pm0.0030$ & $9.756\times10^{-4}$ \\
 & $2048$  & $0.9995$ & $0.3680\pm0.0005$ & $2.0003\pm0.0021$ & $4.880\times10^{-4}$ \\
 & $4096$  & $0.9998$ & $0.3678\pm0.0003$ & $1.9979\pm0.0015$ & $2.441\times10^{-4}$ \\
 & $8192$  & $0.9999$ & $0.3675\pm0.0002$ & $1.9998\pm0.0011$ & $1.221\times10^{-4}$ \\
 & $16384$ & $0.9999$ & $0.3680\pm0.0002$ & $2.0007\pm0.0008$ & $6.103\times10^{-5}$ \\
\addlinespace[2pt]
uniform & all & $0$ & $0$ & $1$ & $0$ \\
\midrule
\multicolumn{2}{l}{theory} & $\alpha$ & $\alpha e^{-1}$ & $1+\alpha$ & $\to0$ \\
\bottomrule
\end{tabular}
\end{center}

The zero-count fraction converges to $\alpha e^{-1}$ from above at rate
consistent with $N^{-1/2}$, and the second-order deviation decreases by a
factor of $4$ per doubling of $N$ in all three constructions, while $R_N/N$
remains at its theoretical limit throughout.

\section{Robustness Checks in Full}\label{sec:S27}

\textbf{Scale checks (A1).} E1 repeated at $N=500$ and $N=2000$ (same $c=0.8$,
$20$ repetitions):

\begin{center}
\renewcommand{\arraystretch}{1.15}
\begin{tabular}{llccc}
\toprule
$N$ & profile & $m_2$ (th) & $\nu(\{0\})$ (th) & $\mu_Q(\{0\})$ (th) \\
\midrule
$500$  & uniform   & $2.5860\pm0.0210$ ($2.6$) & $0.3646\pm0.0051$ ($0.36788$) & $0.20575\pm0.00637$ ($0.20985$) \\
$500$  & two-point & $2.8327\pm0.0142$ ($2.8$) & $0.4154\pm0.0042$ ($0.41483$) & $0.26925\pm0.00525$ ($0.26854$) \\
$2000$ & uniform   & $2.6184\pm0.0081$ ($2.6$) & $0.3701\pm0.0020$ ($0.36788$) & $0.21256\pm0.00244$ ($0.20985$) \\
$2000$ & two-point & $2.7992\pm0.0102$ ($2.8$) & $0.4145\pm0.0020$ ($0.41483$) & $0.26806\pm0.00247$ ($0.26854$) \\
\bottomrule
\end{tabular}
\end{center}

Every quantity converges toward its prediction with error bars shrinking at
the Monte Carlo rate; deviations at $N=2000$ are within $1.4\sigma$
everywhere.

\textbf{Non-Gaussian entries (A2).} Two-point profile, multinomial
resampling, $c=0.8$, variance-one entries with kurtosis $\kappa_4$; finite-$N$
predictions from Proposition~\ref{prop:popcost} include the kurtosis term:

\begin{center}
\renewcommand{\arraystretch}{1.15}
\begin{tabular}{lcccc}
\toprule
entries & $\kappa_4$ & $m_2$ empirical & $m_2$ finite-$N$ theory & $\mu_Q(\{0\})$ empirical (th $0.26854$) \\
\midrule
Gaussian    & $0$    & $2.800\pm0.012$ & $2.80000$ & $0.2705\pm0.0052$ \\
Rademacher  & $-2$   & $2.821\pm0.017$ & $2.79800$ & $0.2755\pm0.0047$ \\
Student-$t_8$ & $1.5$ & $2.821\pm0.021$ & $2.80150$ & $0.2690\pm0.0051$ \\
\bottomrule
\end{tabular}
\end{center}

Deviations from the finite-$N$ predictions are $1.3\sigma$ and $0.9\sigma$.
This is an agreement report for a single configuration; no universality claim
is made.

\textbf{Separable population spectrum (A3).}
$H=\tfrac12\delta_{1/2}+\tfrac12\delta_{3/2}$, uniform weights, multinomial
resampling, $c=0.8$: post-resampling $m_1=1.0011$ (theory $a_1=1$); $m_2$ pre
$2.054$ (theory $2.05$) and post $2.856$ (theory $2.85$); zero atom $0.2079$
(theory $0.20985$ by the max-atom rule of Proposition~\ref{prop:atom}). As an end-to-end
control, the $H\boxtimes\mathcal T_c(\operatorname{Poi}(1))$ pipeline
(S-transform computation) reproduces a directly simulated separable Wishart
spectrum with maximum Stieltjes deviation $5.1\times10^{-4}$
(Fig.~\ref{fig:a3}).

\begin{figure}[!t]
  \centering
  \includegraphics[width=\columnwidth]{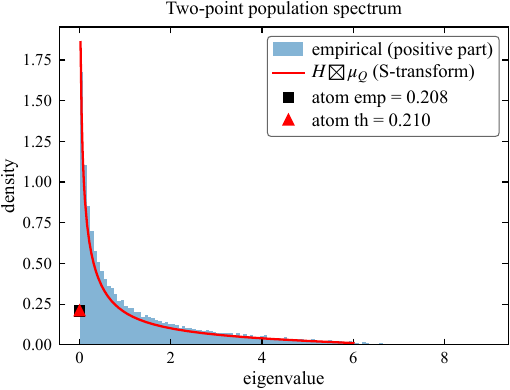}
  \caption{End-to-end check of Theorem~\ref{thm:lsd-gen}: directly simulated spectrum under
    the two-point population $H=\frac12\delta_{1/2}+\frac12\delta_{3/2}$
    against the $H\boxtimes\mathcal T_c(\operatorname{Poi}(1))$ prediction,
    with the zero atom drawn separately. The two computation routes agree to
    a maximum Stieltjes deviation of $5.1\times10^{-4}$.}
  \label{fig:a3}
\end{figure}

\bibliographystyle{IEEEtran}
\bibliography{dm_refs} 

@incollection{doucet2011tutorial,
  author    = {Arnaud Doucet and Adam M. Johansen},
  title     = {A tutorial on particle filtering and smoothing: fifteen years later},
  booktitle = {The Oxford Handbook of Nonlinear Filtering},
  editor    = {Dan Crisan and Boris Rozovskii},
  publisher = {Oxford University Press},
  year      = {2011},
  pages     = {656--704}
}

@book{delmoral2004feynman,
  author    = {Pierre Del Moral},
  title     = {Feynman--Kac Formulae: Genealogical and Interacting Particle Systems with Applications},
  series    = {Probability and Its Applications},
  publisher = {Springer},
  year      = {2004}
}

@article{kong1994sequential,
  author  = {Augustine Kong and Jun S. Liu and Wing Hung Wong},
  title   = {Sequential imputations and {Bayesian} missing data problems},
  journal = {Journal of the American Statistical Association},
  volume  = {89},
  number  = {425},
  pages   = {590--599},
  year    = {1994}
}

@article{gerber2019negative,
  author  = {Mathieu Gerber and Nicolas Chopin and Nick Whiteley},
  title   = {Negative association, ordering and convergence of resampling methods},
  journal = {The Annals of Statistics},
  volume  = {47},
  number  = {4},
  pages   = {2236--2260},
  year    = {2019}
}

@article{elvira2022rethinking,
  author  = {V\'ictor Elvira and Luca Martino and Christian P. Robert},
  title   = {Rethinking the effective sample size},
  journal = {International Statistical Review},
  volume  = {90},
  number  = {1},
  pages   = {29--53},
  year    = {2022}
}

@article{silverstein1995strong,
  author  = {Jack W. Silverstein},
  title   = {Strong convergence of the empirical distribution of eigenvalues of large dimensional random matrices},
  journal = {Journal of Multivariate Analysis},
  volume  = {55},
  number  = {2},
  pages   = {331--339},
  year    = {1995}
}

@book{bai2010spectral,
  author    = {Zhidong Bai and Jack W. Silverstein},
  title     = {Spectral Analysis of Large Dimensional Random Matrices},
  edition   = {2},
  series    = {Springer Series in Statistics},
  publisher = {Springer},
  year      = {2010}
}

@article{hachem2007deterministic,
  author  = {Walid Hachem and Philippe Loubaton and Jamal Najim},
  title   = {Deterministic equivalents for certain functionals of large random matrices},
  journal = {The Annals of Applied Probability},
  volume  = {17},
  number  = {3},
  pages   = {875--930},
  year    = {2007}
}

@article{voiculescu1987multiplication,
  author  = {Dan Voiculescu},
  title   = {Multiplication of certain noncommuting random variables},
  journal = {Journal of Operator Theory},
  volume  = {18},
  number  = {2},
  pages   = {223--235},
  year    = {1987}
}

@article{collins2006integration,
  author  = {Beno\^{\i}t Collins and Piotr \'Sniady},
  title   = {Integration with respect to the {Haar} measure on unitary, orthogonal and symplectic group},
  journal = {Communications in Mathematical Physics},
  volume  = {264},
  number  = {3},
  pages   = {773--795},
  year    = {2006}
}

@article{benaych2009rectangular,
  author  = {Florent Benaych-Georges},
  title   = {Rectangular random matrices, related convolution},
  journal = {Probability Theory and Related Fields},
  volume  = {144},
  number  = {3--4},
  pages   = {471--515},
  year    = {2009}
}

@article{davis1957schwarz,
  author  = {Chandler Davis},
  title   = {A {Schwarz} inequality for convex operator functions},
  journal = {Proceedings of the American Mathematical Society},
  volume  = {8},
  number  = {1},
  pages   = {42--44},
  year    = {1957}
}

@article{fan1949theorem,
  author  = {Ky Fan},
  title   = {On a theorem of {Weyl} concerning eigenvalues of linear transformations. {I}},
  journal = {Proceedings of the National Academy of Sciences},
  volume  = {35},
  number  = {11},
  pages   = {652--655},
  year    = {1949}
}

@article{karamata1932inegalite,
  author  = {Jovan Karamata},
  title   = {Sur une in\'egalit\'e relative aux fonctions convexes},
  journal = {Publications de l'Institut Math\'ematique (Belgrade)},
  volume  = {1},
  pages   = {145--148},
  year    = {1932}
}

@book{marshall1979inequalities,
  author    = {Albert W. Marshall and Ingram Olkin and Barry C. Arnold},
  title     = {Inequalities: Theory of Majorization and Its Applications},
  publisher = {Springer},
  year      = {1979},
  note      = {Second edition, 2011}
}

@book{bhatia1997matrix,
  author    = {Rajendra Bhatia},
  title     = {Matrix Analysis},
  series    = {Graduate Texts in Mathematics 169},
  publisher = {Springer},
  year      = {1997}
}

@book{rockafellar1970convex,
  author    = {R. Tyrrell Rockafellar},
  title     = {Convex Analysis},
  publisher = {Princeton University Press},
  year      = {1970}
}

@article{elkaroui2016bootstrap,
  author  = {Noureddine El Karoui and Elizabeth Purdom},
  title   = {The bootstrap, covariance matrices and {PCA} in moderate and high-dimensions},
  journal = {arXiv preprint arXiv:1608.00948},
  year    = {2016}
}

@article{li2012deterministic,
  author  = {Tiancheng Li and Tariq Parvez Sattar and Shudong Sun},
  title   = {Deterministic resampling: unbiased sampling to avoid sample impoverishment in particle filters},
  journal = {Signal Processing},
  volume  = {90},
  number  = {1},
  pages   = {279--293},
  year    = {2012}
}

@article{hansen1982jensen,
  author  = {Frank Hansen and Gert K. Pedersen},
  title   = {Jensen's inequality for operators and {L\"owner}'s theorem},
  journal = {Mathematische Annalen},
  volume  = {258},
  number  = {3},
  pages   = {229--241},
  year    = {1982}
}

@article{silverstein1995analysis,
  author  = {Jack W. Silverstein and Sang Il Choi},
  title   = {Analysis of the limiting spectral distribution of large dimensional random matrices},
  journal = {Journal of Multivariate Analysis},
  volume  = {54},
  number  = {2},
  pages   = {295--309},
  year    = {1995}
}

@article{bai1993limit,
  author  = {Z. D. Bai and Y. Q. Yin},
  title   = {Limit of the smallest eigenvalue of a large dimensional sample covariance matrix},
  journal = {The Annals of Probability},
  volume  = {21},
  number  = {3},
  pages   = {1275--1294},
  year    = {1993}
}

@phdthesis{zhang2006spectral,
  author = {Lixin Zhang},
  title  = {Spectral analysis of large dimensional random matrices},
  school = {National University of Singapore},
  year   = {2006}
}

@article{paul2009no,
  author  = {Debashis Paul and Jack W. Silverstein},
  title   = {No eigenvalues outside the support of the limiting empirical spectral distribution of a separable covariance matrix},
  journal = {Journal of Multivariate Analysis},
  volume  = {100},
  number  = {1},
  pages   = {37--57},
  year    = {2009}
}

@article{bercovici1993free,
  author  = {Hari Bercovici and Dan Voiculescu},
  title   = {Free convolution of measures with unbounded support},
  journal = {Indiana University Mathematics Journal},
  volume  = {42},
  number  = {3},
  pages   = {733--773},
  year    = {1993}
}

@article{belinschi-atoms,
  author  = {Serban Belinschi},
  title   = {The atoms of the free multiplicative convolution of two probability distributions},
  journal = {Integral Equations and Operator Theory},
  volume  = {46},
  number  = {4},
  pages   = {377--386},
  year    = {2003}
}

@article{gordon1993novel,
  author  = {Neil J. Gordon and David J. Salmond and Adrian F. M. Smith},
  title   = {Novel approach to nonlinear/non-{G}aussian {B}ayesian state
             estimation},
  journal = {IEE Proceedings F (Radar and Signal Processing)},
  volume  = {140},
  number  = {2},
  pages   = {107--113},
  year    = {1993}
}

@article{cappe2007overview,
  author  = {Olivier Capp\'e and Simon J. Godsill and Eric Moulines},
  title   = {An overview of existing methods and recent advances in
             sequential {M}onte {C}arlo},
  journal = {Proceedings of the IEEE},
  volume  = {95},
  number  = {5},
  pages   = {899--924},
  year    = {2007}
}

@inproceedings{douc2005comparison,
  author    = {Randal Douc and Olivier Capp\'e and Eric Moulines},
  title     = {Comparison of resampling schemes for particle filtering},
  booktitle = {Proc. 4th Int. Symp. Image and Signal Processing and Analysis
               (ISPA)},
  address   = {Zagreb, Croatia},
  pages     = {64--69},
  year      = {2005}
}

@article{martino2017ess,
  author  = {Luca Martino and V\'ictor Elvira and Francisco Louzada},
  title   = {Effective sample size for importance sampling based on
             discrepancy measures},
  journal = {Signal Processing},
  volume  = {131},
  pages   = {386--401},
  year    = {2017}
}

@article{marchenko1967distribution,
  author  = {V. A. Marchenko and L. A. Pastur},
  title   = {Distribution of eigenvalues for some sets of random matrices},
  journal = {Mathematics of the USSR-Sbornik},
  volume  = {1},
  number  = {4},
  pages   = {457--483},
  year    = {1967}
}

@article{baik2005phase,
  author  = {Jinho Baik and G\'erard Ben Arous and Sandrine P\'ech\'e},
  title   = {Phase transition of the largest eigenvalue for nonnull complex
             sample covariance matrices},
  journal = {The Annals of Probability},
  volume  = {33},
  number  = {5},
  pages   = {1643--1697},
  year    = {2005}
}

\end{document}